\newtheorem{theorem}{Theorem}[section]
\newtheorem{corollary}[theorem]{Corollary}
\newtheorem{lemma}[theorem]{Lemma}
\newtheorem{proposition}[theorem]{Proposition}
\newtheorem{example}[theorem]{Example}
\theoremstyle{definition}
\newtheorem{definition}[theorem]{Definition}
\newtheorem{remark}[theorem]{\textbf{Remark}}
\numberwithin{equation}{section}
\begin{document}

\title{Markovian projections for functionals of It\^o semimartingales with jumps\footnote{Acknowledgement: the authors would like to thank Steven Shreve for useful conversations.}}
\author{
	Martin Larsson\footnote{Department of Mathematical Sciences, Carnegie Mellon University, \texttt{larsson@cmu.edu}} \and
	Shukun Long\footnote{Department of Mathematical Sciences, Carnegie Mellon University, \texttt{shukunl@alumni.cmu.edu}}
}
%\date{}
\maketitle

\begin{abstract}
Given an It\^o semimartingale $X$, its Markovian projection is an It\^o semimartingale $\widehat{X}$, with Markovian differential characteristics, that matches the one-dimensional marginal laws of $X$. One may even require certain functionals of the two processes to have the same fixed-time marginals, at the cost of enhancing the differential characteristics of $\widehat{X}$ but still in a Markovian sense. In the continuous case, the definitive result on existence of Markovian projections was obtained by Brunick and Shreve~\cite{MR3098443}. In this paper, we extend their result to the fully general setting of It\^o semimartingales with jumps.
\end{abstract}

\section{Introduction}\label{sec:1}
The Markovian projection of an It\^o process is another It\^o process with Markovian-type dynamics that mimics the one-dimensional marginal laws of the original process. Let us say we are given a $d$-dimensional continuous It\^o process $X$ with dynamics
\begin{equation}\label{eq:SDE1}
	X_t = X_0 + \int_0^t b_s \,ds + \int_0^t \sigma_s \,dW_s,
\end{equation}
where $b$ and $\sigma$ are predictable processes taking values in $\mathbb{R}^d$ and $\mathbb{R}^{d \times k}$ respectively, and $W$ is a $k$-dimensional Brownian motion. The processes $b$ and $\sigma$ may be solutions to some exogenous SDEs with path dependent features, making the dynamics of $X$ much more complicated. Our goal is to find a simpler process $\widehat{X}$, possibly defined on a different probability space, that solves a Markovian SDE
\begin{equation}\label{eq:SDE2}
	\widehat{X}_t = \widehat{X}_0 + \int_0^t \widehat{b}(s, \widehat{X}_s) \,ds + \int_0^t \widehat{\sigma}(s, \widehat{X}_s) \,d\widehat{W}_s,
\end{equation}
such that for every $t \geq 0$, the law of $\widehat{X}_t$ agrees with the law of $X_t$. Here $\widehat{b}$ and $\widehat{\sigma}$ are deterministic functions taking values in $\mathbb{R}^d$ and $\mathbb{R}^{d \times d}$ respectively, and $\widehat{W}$ is a $d$-dimensional Brownian motion. If we manage to do so, the process $\widehat{X}$ is called a \emph{Markovian projection} of $X$. We emphasize that although the terminology ``Markovian projection'' has the word ``Markov'' in it, we only require $\widehat{X}$ to solve a Markovian SDE. Without further regularity assumptions on the coefficients $\widehat{b}$ and $\widehat{\sigma}$, we know $\widehat{X}$ may not necessarily be a true Markov process. Also, some authors use alternative terminologies like ``mimicking process'' when referring to the process $\widehat{X}$, and ``mimicking theorem'' when referring to the results that construct such an $\widehat{X}$. The goal of this paper is to establish existence of Markovian projections in the fully general setting of It\^o semimartingales, including processes with jumps.

The idea of Markovian projections was first introduced in the seminal work of Krylov~\cite{MR0808203} and Gy\"{o}ngy~\cite{MR0833267}. In \cite{MR0833267}, Gy\"{o}ngy constructed Markovian projections for continuous It\^o processes $X$ as formulated in \eqref{eq:SDE1} with $X_0 = 0$. Gy\"{o}ngy's results hold under a boundedness assumption on the coefficients $b$ and $\sigma$, and a uniform ellipticity condition on the matrix-valued process $\sigma \sigma^\top$. The mimicking process $\widehat{X}$ is constructed as a weak solution to the SDE \eqref{eq:SDE2}, where the coefficients $\widehat{b}$ and $\widehat{\sigma}$ have an explicit expression:
\begin{equation}\label{eq:cond_exp_intro}
	\begin{split}
		\widehat{b}(t, x)
		&= \mathbb{E}[b_t \,|\, X_t = x],\\
		\widehat{\sigma}(t, x) \widehat{\sigma}(t, x)^\top
		&= \mathbb{E}[\sigma_t \sigma_t^\top \,|\, X_t = x].
	\end{split}
\end{equation}
Strictly speaking, the conditional expectations above should be understood as certain Radon--Nikodym derivatives, but \eqref{eq:cond_exp_intro} provides an intuitive interpretation of the functions $\widehat{b}$ and $\widehat{\sigma}$. For the precise definition, the readers can refer to \cite{MR0833267}, Section~4. Gy\"{o}ngy's work on Markovian projections was inspired by Krylov~\cite{MR0808203}, where a different type of mimicking problem was studied. In \cite{MR0808203}, one of the objects of interest is called the \emph{Green measure}, which characterizes the average length of time that an It\^o process stays in a Borel set. Krylov constructed a simpler It\^o diffusion that has the same Green measure as a more general It\^o diffusion. The mimicking process of Krylov solves a Markovian SDE, with time-homogeneous coefficients. Following similar proof techniques, Gy\"{o}ngy showed that the one-dimensional marginal laws can be mimicked as well using a Markovian SDE, while the coefficients have to be time-inhomogeneous in general as in \eqref{eq:SDE2}.

Gy\"{o}ngy's theorem on Markovian projections can be extended in multiple directions. Firstly, the boundedness and non-degeneracy conditions on the coefficients $b$ and $\sigma$ are quite restrictive. It is natural to ask for weaker assumptions. On the other hand, apart from mimicking the one-dimensional marginal laws of the process $X$ itself, one may also be interested in mimicking the joint law of $X$ and some functional of $X$ at each fixed time. Both aspects were addressed in Brunick and Shreve~\cite{MR3098443}. In their work, they relaxed the assumptions on $b$ and $\sigma$ to an integrability condition:
\begin{equation}\label{eq:int_cond_cts}
	\mathbb{E}\biggl[\int_0^t (|b_s| + |\sigma_s \sigma_s^\top|) \,ds\biggr] < \infty,\quad
	\forall\, t > 0.
\end{equation}
They also proved a mimicking theorem for a class of functionals, called \emph{updating functions}, of It\^o processes. To avoid technical details here, let us consider $d=1$ and a special case of updating functions --- ``maximum-to-date''. Then, using the main theorem in \cite{MR3098443}, one can construct a Markovian projection for the pair $(X, M)$, where $M \coloneqq \max_{s \leq \cdot} X_s$. The mimicking process $\widehat{X}$, augmented by its running maximum $\widehat{M} \coloneqq \max_{s \leq \cdot} \widehat{X}_s$, follows the Markovian-type dynamics
\begin{equation*}
	\widehat{X}_t = \widehat{X}_0 + \int_0^t \widehat{b}(s, \widehat{X}_s, \widehat{M}_s) \,ds + \int_0^t \widehat{\sigma}(s, \widehat{X}_s, \widehat{M}_s) \,d\widehat{W}_s,
\end{equation*}
where the deterministic functions $\widehat{b}$ and $\widehat{\sigma}$ are given by
\begin{equation*}
	\begin{split}
		\widehat{b}(t, x, y)
		&= \mathbb{E}[b_t \,|\, X_t = x,\, M_t = y],\\
		\widehat{\sigma}^2(t, x, y)
		&= \mathbb{E}[\sigma_t^2 \,|\, X_t = x,\, M_t = y].
	\end{split}
\end{equation*}
The proof techniques in \cite{MR3098443} are purely probabilistic and completely different from those in \cite{MR0833267}, where ideas from PDE come into play.

Another natural extension of Gy\"{o}ngy's results is to consider It\^o processes with jumps. Now let us say $X$ is a $d$-dimensional c\`adl\`ag It\^o semimartingale with the canonical representation
\begin{equation*}
	\begin{split}
		X_t = X_0 &+ \int_0^t b_s \,ds + \int_0^t \sigma_s \,dW_s\\ &+ \int_0^t \int_{\{|\xi| \leq 1\}} \xi \,(\mu^X(ds, d\xi) - \kappa_s(d\xi)ds) + \int_0^t \int_{\{|\xi| > 1\}} \xi \,\mu^X(ds, d\xi),
	\end{split}
\end{equation*}
where $\mu^X$ is an integer-valued random measure on $\mathbb{R}_+ \times \mathbb{R}^d$ that charges $1$ at each point of the form ``$(\text{jump time of } X,\, \text{jump size of } X)$'', and $\kappa$ is a predictable transition kernel from $\Omega \times \mathbb{R}_+$ to $\mathbb{R}^d$. The triplet $(b, c, \kappa)$, where $c \coloneqq \sigma \sigma^\top$, is called the \emph{differential characteristics} of $X$. The goal is to construct an It\^o process $\widehat{X}$ that has Markovian-type differential characteristics:
\begin{equation*}
	\begin{split}
		\widehat{X}_t = \widehat{X}_0 &+ \int_0^t \widehat{b}(s, \widehat{X}_{s-}) \,ds + \int_0^t \widehat{\sigma}(s, \widehat{X}_{s-}) \,d\widehat{W}_s\\ &+ \int_0^t \int_{\{|\xi| \leq 1\}} \xi \,(\mu^{\widehat{X}}(ds, d\xi) - \widehat{\kappa}(s, \widehat{X}_{s-}, d\xi)ds) + \int_0^t \int_{\{|\xi| > 1\}} \xi \,\mu^{\widehat{X}}(ds, d\xi),
	\end{split}
\end{equation*}
such that for every $t \geq 0$, the law of $\widehat{X}_t$ agrees with the law of $X_t$. The functions $\widehat{b}$ and $\widehat{\sigma}$ are given by \eqref{eq:cond_exp_intro}, and analogously we expect the deterministic transition kernel $\widehat{\kappa}$ to have the following explicit expression:
\begin{equation*}
	\widehat{\kappa}(t, x, d\xi)
	= \mathbb{E}[\kappa_t(d\xi) \,|\, X_t = x].
\end{equation*}

Bentata and Cont~\cite{bentata2012mimicking} studied the above problem that involves jumps. As is the case for Gy\"{o}ngy~\cite{MR0833267}, Bentata and Cont's theorem holds under the same boundedness and non-degeneracy conditions on $b$ and $\sigma$, together with a boundedness and decay condition on the third differential characteristic $\kappa$. Moreover, they also imposed some continuity assumptions on $\widehat{b}$, $\widehat{\sigma}$ and $\widehat{\kappa}$, which are not always easy to check in practice. However, it is worth mentioning that although their assumptions are relatively strong, they also showed the uniqueness in law and the Markov property of the mimicking process, which are not guaranteed in \cite{MR0833267} or \cite{MR3098443}.

In our previous work \cite{MR4814246}, we independently developed Markovian projections for c\`adl\`ag It\^o semimartingales. One of the main tools in \cite{MR4814246} is the superposition principle for non-local generators developed by R\"{o}ckner, Xie and Zhang~\cite{MR4168386}. The idea of using a superposition principle to prove a mimicking theorem seems to have been first used in Lacker, Shkolnikov and Zhang~\cite{MR4612111}. The main results in \cite{MR4814246} hold under relatively mild assumptions: an integrability condition similar to \eqref{eq:int_cond_cts} as in \cite{MR3098443}:
\begin{equation}\label{eq:int_cond_jump}
	\mathbb{E}\biggl[\int_0^t \biggl(|b_s| + |\sigma_s \sigma_s^\top| + \int_{\mathbb{R}^d} 1 \land |\xi|^2 \,\kappa_s(d\xi)\biggr) \,ds\biggr] < \infty,\quad
	\forall\, t > 0,
\end{equation}
and a growth condition on $(\widehat{b}, \widehat{\sigma}, \widehat{\kappa})$ (see \cite{MR4814246}, Equation~(3.4)). Although this growth condition is not strictly weaker than the assumption in \cite{bentata2012mimicking}, it is generally easier to verify than a continuity condition. Also, as is in \cite{MR0833267} and \cite{MR3098443}, properties beyond existence of the mimicking process are not guaranteed in general. An important limitation of the method in \cite{MR4814246} is that the superposition principle is not available in path-dependent situations involving updating functions.

In this paper, we construct Markovian projections in the fully general setting of It\^o semimartingales with jumps. The only assumption of our new results is \eqref{eq:int_cond_jump}; the growth condition on $(\widehat{b}, \widehat{\sigma}, \widehat{\kappa})$ is no longer needed. Thus, our assumption is much weaker than those in \cite{bentata2012mimicking} and \cite{MR4814246}. Moreover, our mimicking results work for a class of functionals of It\^o processes, not just the processes themselves, which again strictly generalizes \cite{bentata2012mimicking} and \cite{MR4814246}. The proof techniques are completely different than those in \cite{MR4814246}. We build on the pioneering ideas of Brunick and Shreve~\cite{MR3098443}, utilizing in particular the concepts of updating functions and concatenated probability measures. Nonetheless, the extension to the jump case is nontrivial and requires, for instance, a carefully designed canonical space for the third characteristic of an It\^o semimartingale, the compensator of its jump measure. This involves a delicate analysis of certain measure-valued processes, which does not arise in the continuous case in \cite{MR3098443}.

This paper is organized as follows. In Section~\ref{sec:2} we state our main results. In Section~\ref{sec:3} we build the canonical space and gather all the required preliminaries. In Section~\ref{sec:4} we prove our main results. Throughout this paper, we use the following notation and convention:
\begin{itemize}[topsep=0.3em, noitemsep]
	\item $\mathbb{R}_+ = [0, \infty)$.
	
	\item $\mathbb{N}$ ($\mathbb{N}^*$) is the set of natural numbers including (excluding) $0$.
	
	\item $\mathbb{S}_+^d$ is the set of symmetric positive semi-definite $d \times d$ real matrices.
	
	\item $\mu(f) = \int f \,d\mu$, for $\mu$ a measure and $f$ a measurable function on some space such that the integral is well-defined.
	
	\item All semimartingales have c\`adl\`ag sample paths.
\end{itemize}

\section{Main Results}\label{sec:2}
In this section we present our main result, Theorem~\ref{thm:mp}. The statement of the main theorem involves a concept called the \emph{updating function}. This will be crucial when we want to mimic the one-dimensional marginal laws of functionals of It\^o processes. The proof of Theorem~\ref{thm:mp} is postponed to Section~\ref{sec:4}.

\subsection{Updating function}\label{sec:upd_fn}
Let $\mathcal{E}$ be a Polish space. Let $C^{\mathcal{E}}$ be the space of continuous functions from $\mathbb{R}_+$ to $\mathcal{E}$, endowed with the topology of uniform convergence on compact intervals. Let $D^{\mathcal{E}}$ be the space of c\`adl\`ag functions from $\mathbb{R}_+$ to $\mathcal{E}$, endowed with the Skorokhod's $J_1$ topology. If $\mathcal{E}$ is a subset of a vector space with $0 \in \mathcal{E}$, we let $C^{\mathcal{E}}_0$ (resp.\ $D^{\mathcal{E}}_0$) denote the closed subset of $C^{\mathcal{E}}$ (resp.\ $D^{\mathcal{E}}$) consisting of elements with initial value $0$. In particular, when $\mathcal{E} = \mathbb{R}^d$, we write $C^d$, $C^d_0$, $D^d$ and $D^d_0$ for short, rather than $C^{\mathbb{R}^d}$, $C^{\mathbb{R}^d}_0$, $D^{\mathbb{R}^d}$ and $D^{\mathbb{R}^d}_0$. Note that all the spaces defined here are Polish spaces.

We define three types of elementary operators on the space $D^{\mathcal{E}}$. The \emph{shift operator} $\Theta: D^{\mathcal{E}} \times \mathbb{R_+} \to D^{\mathcal{E}}$ is defined via
\begin{equation*}
	\Theta(x, t) \coloneqq x(t + \cdot),\quad
	x \in D^{\mathcal{E}},\, t \geq 0.
\end{equation*}
The \emph{stopping operator} $\nabla: D^{\mathcal{E}} \times \mathbb{R_+} \to D^{\mathcal{E}}$ is defined via
\begin{equation*}
	\nabla(x, t) \coloneqq x(t \land \cdot),\quad
	x \in D^{\mathcal{E}},\, t \geq 0.
\end{equation*}
We alternatively write $x^t = x(t \land \cdot)$. If $\mathcal{E}$ is a vector space, the \emph{difference operator} $\Delta: D^{\mathcal{E}} \times \mathbb{R_+} \to D^{\mathcal{E}}_0$ is defined via
\begin{equation*}
	\Delta(x, t) \coloneqq x(t + \cdot) - x(t),\quad
	x \in D^{\mathcal{E}},\, t \geq 0.
\end{equation*}
Note that if we restrict the operators $\Theta$, $\nabla$ and $\Delta$ to $C^{\mathcal{E}} \times \mathbb{R}_+$, then their ranges are all included in $C^{\mathcal{E}}$.

\begin{definition}[cf.\ \cite{MR3098443}, Definition~3.1]
	We say that $\Phi: \mathcal{E} \times D^d_0 \to D^{\mathcal{E}}$ is an updating function, if it satisfies
	\begin{enumerate}[label=(\roman*), topsep=0.3em, noitemsep]
		\item initial condition:
		\begin{equation*}
			\Phi(e, x)(0) = e,\quad
			\forall\, e \in \mathcal{E},\, x \in D^d_0,
		\end{equation*}
		
		\item nonanticipativity:
		\begin{equation*}
			\nabla(\Phi(e, x), t) = \nabla(\Phi(e, \nabla(x, t)), t),\quad
			\forall\, t \geq 0,\, e \in \mathcal{E},\, x \in D^d_0,
		\end{equation*}
		
		\item ``Markov property'':
		\begin{equation*}
			\Theta(\Phi(e, x), t) = \Phi(\Phi(e, x)(t), \Delta(x, t)),\quad
			\forall\, t \geq 0,\, e \in \mathcal{E},\, x \in D^d_0.
		\end{equation*}
	\end{enumerate}
\end{definition}

The updating function $\Phi$ takes an initial value in $\mathcal{E}$ and a path in $D^d_0$, then generates a path in $D^{\mathcal{E}}$. Since $\Phi$ is a map between two Polish spaces, one can also talk about its continuity. In particular, in our main results, we will require the updating functions to be continuous. Below are some examples of continuous updating functions, most of which are presented in Brunick and Shreve~\cite{MR3098443}. However, since we are extending from the ``$C$-space'' to the ``$D$-space'', it is worth discussing these examples here, especially their continuity.

\begin{example}[Process itself]\label{eg:1}
	Let $\mathcal{E} = \mathbb{R}^d$, and define $\Phi: \mathbb{R}^d \times D^d_0 \to D^d$ via
	\begin{equation*}
		\Phi(e, x) \coloneqq e + x,\quad
		e \in \mathbb{R}^d,\, x \in D^d_0.
	\end{equation*}
	If $X$ is an $\mathbb{R}^d$-valued c\`adl\`ag process, then we trivially have $\Phi(X_0, X - X_0) = X$, which recovers the process itself. Clearly, $\Phi$ is a continuous updating function.
\end{example}

\begin{example}[Integral-to-date]\label{eg:2}
	Let $\mathcal{E} = \mathbb{R}^2$, $d = 1$, and define $\Phi: \mathbb{R}^2 \times D^1_0 \to D^2$ via
	\begin{equation*}
		\Phi(e, x)
		\coloneqq \biggl(e_1 + x, e_2 + \int_0^\cdot (e_1 + x(s)) \,ds\biggr),\quad
		e = (e_1, e_2) \in \mathbb{R}^2,\, x \in D^1_0.
	\end{equation*}
	If $X$ is a real-valued c\`adl\`ag process and $A_0$ is a real-valued random variable, then we have
	\begin{equation*}
		\Phi((X_0, A_0), X - X_0)
		= (X, A),\quad
		\text{where } A_t = A_0 + \int_0^t X_s \,ds.
	\end{equation*}
	It is easy to check that $\Phi$ is an updating function. To see $\Phi$ is continuous, we only need to verify its second component $\Phi_2$. We notice that $\Phi_2$ takes values in $C^1$ (not just $D^1$), so we can prove continuity using the topology of the ``$C$-space''. Take $e^n \to e$ in $\mathbb{R}^2$ and $x^n \to x$ in $D^1_0$. It suffices to show
	\begin{equation*}
		\begin{split}
			&\max_{t \leq T}\, \biggl|e^n_2 + \int_0^t (e^n_1 + x^n(s)) \,ds - e_2 - \int_0^t (e_1 + x(s)) \,ds\biggr|\\
			&\quad\quad \leq |e^n_2 - e_2| + T|e^n_1 - e_1| + \int_0^T |x^n(s) - x(s)| \,ds \to 0,\quad
			\forall\, T > 0.
		\end{split}
	\end{equation*}
	Since $(x^n)$ and $x$ are uniformly bounded on $[0, T]$, and $x^n(s) \to x(s)$ for all but countably many values of $s$, the dominated convergence theorem finishes the proof. 
\end{example}

\begin{example}[Supremum-to-date]\label{eg:3}
	Let $\mathcal{E} = \{(e_1, e_2) \in \mathbb{R}^2: e_1 \leq e_2\}$, $d = 1$, and define $\Phi: \mathcal{E} \times D^1_0 \to D^{\mathcal{E}} \subset D^2$ via
	\begin{equation*}
		\Phi(e, x)
		\coloneqq \biggl(e_1 + x, e_2 \lor \sup_{s \leq \cdot} (e_1 + x(s))\biggr),\quad
		e = (e_1, e_2) \in \mathcal{E},\, x \in D^1_0.
	\end{equation*}
	If $X$ is a real-valued c\`adl\`ag process and $M_0$ is a real-valued random variable satisfying $M_0 \geq X_0$ a.s., then we have
	\begin{equation*}
		\Phi((X_0, M_0), X - X_0)
		= (X, M),\quad
		\text{where } M_t = M_0 \lor \sup_{s \leq t} X_s.
	\end{equation*}
	It is easy to check that $\Phi$ is an updating function. To see $\Phi$ is continuous, we only need to verify its second component $\Phi_2$. Take $e^n \to e$ in $\mathcal{E}$ and $x^n \to x$ in $D^1_0$. We know (see e.g.\ \cite{MR1700749}, Theorem~16.1) there exists a sequence $(\lambda^n)$ of continuous increasing functions from $\mathbb{R}_+$ onto $\mathbb{R}_+$ such that $\lambda^n \to \mathrm{id}$ uniformly on $\mathbb{R}_+$ and $x^n \circ \lambda^n \to x$ uniformly on compact intervals. To prove $\Phi_2(e^n, x^n) \to \Phi_2(e, x)$ in $D^1$, it suffices to show
	\begin{equation*}
		\sup_{t \leq T} \,\biggl|e^n_2 \lor \sup_{s \leq \lambda^n(t)} (e^n_1 + x^n(s)) - e_2 \lor \sup_{s \leq t} (e_1 + x(s))\biggr| \to 0,\quad \forall\, T > 0.
	\end{equation*}
	However, one can rewrite the left-hand side and bound it by
	\begin{equation*}
		\begin{split}
			&\sup_{t \leq T} \,\biggl| e^n_2 \lor \sup_{s \leq t} (e^n_1 + x^n(\lambda^n(s))) - e_2 \lor \sup_{s \leq t} (e_1 + x(s))\biggr|\\
			&\quad\quad \leq |e^n_2 - e_2| \lor \biggl(|e^n_1 - e_1| + \sup_{s \leq T} |x^n(\lambda^n(s)) - x(s)|\biggr),
		\end{split}
	\end{equation*}
	which clearly goes to $0$ by assumption.
\end{example}

\begin{example}[Maximal jump-to-date]\label{eg:4}
	Let $\mathcal{E} = \mathbb{R} \times \mathbb{R}_+$, $d=1$, and define $\Phi: \mathcal{E} \times D^1_0 \to D^{\mathcal{E}} \subset D^2$ via
	\begin{equation*}
		\Phi(e, x)
		\coloneqq \biggl(e_1 + x, e_2 \lor \max_{s \leq \cdot} (x(s) - x(s-))\biggr),\quad
		e = (e_1, e_2) \in \mathcal{E},\, x \in D^1_0.
	\end{equation*}
	If $X$ is a real-valued c\`adl\`ag process and $J_0$ is a nonnegative random variable, then we have
	\begin{equation*}
		\Phi((X_0, J_0), X - X_0)
		= (X, J),\quad
		\text{where } J_t = J_0 \lor \max_{s \leq t} \Delta X_s.
	\end{equation*}
	It is easy to check that $\Phi$ is an updating function. The continuity of $\Phi$ can be proved in almost the same way as in Example~\ref{eg:3}, once we notice the following simple fact:
	\begin{equation*}
		\max_{s \leq t} |y(s) - y(s-)|
		\leq 2 \sup_{s \leq t} |y(s)|,\quad
		\forall\, y \in D^1,\, t \geq 0.
	\end{equation*}
\end{example}

\subsection{Semimartingale characteristics}
In this subsection we briefly review the concept of semimartingale characteristics. For a detailed discussion, the readers can refer to \cite{MR1943877}, Chapter~II.2. Recall that a semimartingale $X$ is a c\`adl\`ag process which admits a decomposition $X = B + M$, where $B$ is a finite variation process and $M$ is a local martingale. Such a decomposition is not unique. A special semimartingale $X$ is a semimartingale which admits a decomposition $X = B + M$, where $B$ is a predictable finite variation process and $M$ is a local martingale. In this case, such a decomposition is unique, and is called the \emph{canonical decomposition} of $X$. In particular, by \cite{MR1943877}, Lemma~I.4.24, a semimartingale with bounded jumps is a special semimartingale.

\begin{definition}
	We say $h: \mathbb{R}^d \to \mathbb{R}^d$ is a \emph{truncation function} if $h$ is measurable, bounded and $h(x) = x$ in a neighborhood of $0$.
\end{definition}

\begin{definition}
	Let $(\Omega, \mathcal{F}, (\mathcal{F}_t)_{t \geq 0}, \mathbb{P})$ be a filtered probability space. Let $\mathcal{P}$ be the predictable $\sigma$-algebra on $\Omega \times \mathbb{R}_+$, and $\mu: \Omega \times \mathcal{B}(\mathbb{R}_+ \times \mathbb{R}^d) \to [0, \infty]$ be a random measure. We say $\mu$ is a \emph{predictable random measure}, if the process
	\begin{equation*}
		\Omega \times \mathbb{R}_+ \ni (\omega, t)
		\mapsto \int_{[0, t] \times \mathbb{R}^d} W(\omega, s, x) \,\mu(\omega, ds, dx)
	\end{equation*}
	is predictable for all nonnegative functions $W$ on $\Omega \times \mathbb{R}_+ \times \mathbb{R}^d$ which are measurable with respect to $\mathcal{P} \otimes \mathcal{B}(\mathbb{R}^d)$.
\end{definition}

\begin{definition}\label{def:char}
	Let $X = (X^i)_{1 \leq i \leq d}$ be an $\mathbb{R}^d$-valued semimartingale. The \emph{characteristics} of $X$ associated with a truncation function $h$ is the triplet $(B, C, \nu)$ consisting in:
	\begin{enumerate}[label=(\roman*), topsep=0.3em, noitemsep]
		\item $B = (B^i)_{1 \leq i \leq d}$ is an $\mathbb{R}^d$-valued predictable finite variation process, which is the predictable finite variation part of the special semimartingale
		\begin{equation*}
			X(h)_t \coloneqq X_t - \sum_{s \leq t} (\Delta X_s - h(\Delta X_s)),
		\end{equation*}
		
		\item $C = (C^{ij})_{1 \leq i, j \leq d}$ is an $\mathbb{R}^{d^2}$-valued continuous finite variation process, such that
		\begin{equation*}
			C^{ij}
			= \langle X^{i, c}, X^{j, c} \rangle,\quad
			1 \leq i, j \leq d,
		\end{equation*}
		where $X^c = (X^{i, c})_{1 \leq i \leq d}$ is the continuous local martingale part of $X$,
		
		\item $\nu$ is a predictable random measure on $\mathbb{R}_+ \times \mathbb{R}^d$, which is the compensator of the random measure $\mu^X$ associated with the jumps of $X$, namely
		\begin{equation*}
			\mu^X(dt, d\xi) \coloneqq \sum_{s > 0} \bm{1}_{\{\Delta X_s \neq 0\}} \delta_{(s, \Delta X_s)}(dt, d\xi).
		\end{equation*}
	\end{enumerate}
\end{definition}

Note that $C$ and $\nu$ do not depend on the choice of the truncation function $h$, while $B = B(h)$ does. For two truncation functions $h$ and $\widetilde{h}$, the relationship between their corresponding $B$ is given by \cite{MR1943877}, Proposition~II.2.24:
\begin{equation}\label{eq:B(h)}
	B(h)_t - B(\widetilde{h})_t
	= \int_{[0, t] \times \mathbb{R}^d} (h(\xi) - \widetilde{h}(\xi)) \,\nu(ds, d\xi).
\end{equation}

\begin{definition}
	Let $(X, \mathcal{A})$ be a measurable space, and $\kappa: X \times \mathcal{B}(\mathbb{R}^d) \to [0, \infty]$ be a transition kernel from $X$ to $\mathbb{R}^d$. We say $\kappa$ is a \emph{L\'evy transition kernel}, if $\kappa(x, dy)$ is a L\'evy measure on $\mathbb{R}^d$ for each $x \in X$, i.e.\
	\begin{equation*}
		\kappa(x, \{0\}) = 0,\quad
		\int_{\mathbb{R}^d} 1 \land |y|^2 \,\kappa(x, dy) < \infty.
	\end{equation*}
\end{definition}

\begin{definition}\label{def:diff_charac}
	Let $X$ be an $\mathbb{R}^d$-valued semimartingale with characteristics triplet $(B, C, \nu)$ associated with a truncation function $h$. We say $X$ is an \emph{It\^o semimartingale}, if there exist an $\mathbb{R}^d$-valued predictable process $b$, an $\mathbb{S}^d_+$-valued predictable process $c$, and a predictable L\'evy transition kernel $\kappa$ from $\Omega \times \mathbb{R}_+$ to $\mathbb{R}^d$, such that
	\begin{equation*}
		B_t = \int_0^t b_s \,ds,\quad
		C_t = \int_0^t c_s \,ds,\quad
		\nu([0, t] \times A) = \int_0^t \kappa_s(A) \,ds,\quad
		t \geq 0,\, A \in \mathcal{B}(\mathbb{R}^d).
	\end{equation*}
	We call the triplet $(b, c, \kappa)$ the \emph{differential characteristics} of $X$ associated with $h$.
\end{definition}

Briefly speaking, an It\^o semimartingale is a semimartingale whose characteristics are absolutely continuous in the time variable. Using \eqref{eq:B(h)}, we see that the fact of $X$ being an It\^o semimartingale does not depend on the choice of the truncation function. The notion of differential characteristics of It\^o semimartingales is a generalization of the L\'evy--Khintchine triplet of L\'evy processes. By the famous L\'evy--Khintchine formula (see e.g.\ \cite{MR2273672}, Theorem I.43), the differential characteristics of a L\'evy process consist of a vector $b \in \mathbb{R}^d$, a matrix $c \in \mathbb{S}^d_+$ and a L\'evy measure $\kappa$ on $\mathbb{R}^d$, which are deterministic and independent of time.

In the case where $X$ is a special semimartingale, there is a natural choice of the characteristics triplet which is defined in a truncation function-free way.

\begin{definition}
	Let $X$ be an $\mathbb{R}^d$-valued special semimartingale. Let $B$ be the predictable finite variation part of $X$. Let $C$ and $\nu$ be the second and third characteristics of $X$ respectively. We call the triplet $(B, C, \nu)$ the \emph{canonical characteristics} of $X$.
\end{definition}

Note that given a truncation function $h$, one can still talk about the characteristics $(B(h), C, \nu)$ of $X$ associated with $h$. Analogous to \eqref{eq:B(h)}, the relationship between $B$ and $B(h)$ is given by \cite{MR1943877}, Proposition~II.2.29(a):
\begin{equation}\label{eq:B_B(h)}
	B_t - B(h)_t
	= \int_{[0, t] \times \mathbb{R}^d} (\xi - h(\xi)) \,\nu(ds, d\xi).
\end{equation}
Also, similar to Definition~\ref{def:diff_charac}, we have the notion of \emph{canonical differential characteristics} for special It\^o semimartingales.

\subsection{Statement of the main results}
Now we are able to state our main results.

\begin{theorem}\label{thm:mp}
	Let $\mathcal{E}$ be a Polish space. Let $(\Omega, \mathcal{F}, (\mathcal{F}_t)_{t \geq 0}, \mathbb{P})$ be a filtered probability space, with right-continuous filtration, that supports an $\mathcal{E}$-valued $\mathcal{F}_0$-measurable random variable $Z_0$ and an $\mathbb{R}^d$-valued It\^o semimartingale $Y$ with $Y_0 = 0$ and characteristics triplet $(B, C, \nu)$ associated with a truncation function $h$:
	\begin{equation}\label{eq:char}
		B_t = \int_0^t b_s \,ds,\quad
		C_t = \int_0^t c_s \,ds,\quad
		\nu([0, t] \times A) = \int_0^t \kappa_s(A) \,ds,
	\end{equation}
	where $b$ is an $\mathbb{R}^d$-valued predictable process, $c$ is an $\mathbb{S}^d_+$-valued predictable process, and $\kappa$ is a predictable L\'evy transition kernel from $\Omega \times \mathbb{R}_+$ to $\mathbb{R}^d$. Suppose that $(b, c, \kappa)$ satisfy
	\begin{equation}\label{eq:thm_asm}
		\mathbb{E}\biggl[\int_0^t \biggl(|b_s| + |c_s| + \int_{\mathbb{R}^d} 1 \land |\xi|^2 \,\kappa_s(d\xi)\biggr) \,ds\biggr] < \infty,\quad
		\forall\, t > 0.
	\end{equation}
	Let $\Phi: \mathcal{E} \times D^d_0 \to D^{\mathcal{E}}$ be a continuous updating function, and let $Z = \Phi(Z_0, Y)$. Then, there exist measurable functions $\widehat{b}: \mathbb{R}_+ \times \mathcal{E} \to \mathbb{R}^d$, $\widehat{c}: \mathbb{R}_+ \times \mathcal{E} \to \mathbb{S}_+^d$, and a L\'evy transition kernel $\widehat{\kappa}$ from $\mathbb{R}_+ \times \mathcal{E}$ to $\mathbb{R}^d$ such that for Lebesgue-a.e.\ $t \geq 0$,
	\begin{equation}\label{eq:cond_exp}
		\begin{split}
			\widehat{b}(t, Z_t) &= \mathbb{E}[b_t \,|\, Z_t],\\
			\widehat{c}(t, Z_t) &= \mathbb{E}[c_t \,|\, Z_t],\\
			\int_A 1 \land |\xi|^2 \,\widehat{\kappa}(t, Z_t, d\xi)
			&= \mathbb{E}\biggl[\int_A 1 \land |\xi|^2 \,\kappa_t(d\xi) \,\bigg|\, Z_t\biggr],\quad
			\forall\, A \in \mathcal{B}(\mathbb{R}^d).
		\end{split}
	\end{equation}
	Furthermore, there exists a filtered probability space $(\widehat{\Omega}, \widehat{\mathcal{F}}, (\widehat{\mathcal{F}}_t)_{t \geq 0}, \widehat{\mathbb{P}})$, with right-continuous filtration, that supports an $\mathcal{E}$-valued $\widehat{\mathcal{F}}_0$-measurable random variable $\widehat{Z}_0$ and an $\mathbb{R}^d$-valued c\`adl\`ag process $\widehat{Y}$ with $\widehat{Y}_0 = 0$ such that:
	\begin{enumerate}[label=(\roman*), topsep=0.3em, noitemsep]
		\item $\widehat{Y}$ is an It\^o semimartingale with characteristics triplet $(\widehat{B}, \widehat{C}, \widehat{\nu})$ associated with $h$:
		\begin{equation}\label{eq:char_2}
			\widehat{B}_t = \int_0^t \widehat{b}(s, \widehat{Z}_s) \,ds,\quad
			\widehat{C}_t = \int_0^t \widehat{c}(s, \widehat{Z}_s) \,ds,\quad
			\widehat{\nu}([0, t] \times A) = \int_0^t \widehat{\kappa}(s, \widehat{Z}_s, A) \,ds,
		\end{equation}
		where $\widehat{Z} = \Phi(\widehat{Z}_0, \widehat{Y})$,
		
		\item for each $t \geq 0$, the law of $\widehat{Z}_t$ under $\widehat{\mathbb{P}}$ agrees with the law of $Z_t$ under $\mathbb{P}$.
	\end{enumerate}
\end{theorem}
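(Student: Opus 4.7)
The plan is to adapt the Brunick--Shreve concatenation scheme to the jump setting. The crucial enlargement is that the canonical space must carry not only the path of the It\^o semimartingale and the $\Phi$-augmented process $Z$, but also the full differential characteristics, including the measure-valued third characteristic.

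First I extract the measurable functions $\widehat{b}, \widehat{c}, \widehat{\kappa}$ satisfying \eqref{eq:cond_exp}. For $\widehat{b}$ and $\widehat{c}$, the integrability assumption \eqref{eq:thm_asm} makes the random measures $b_s(\omega)\,\mathbb{P}(d\omega)\,ds$ and $c_s(\omega)\,\mathbb{P}(d\omega)\,ds$ locally finite on $\Omega\times\mathbb{R}_+$, and their Radon--Nikodym derivatives with respect to $\mathbb{P}(d\omega)\,ds$, disintegrated by the $\sigma(t, Z_t)$-push-forward, yield jointly measurable densities. For the L\'evy kernel $\kappa_t$, I work with the finite measure $\widetilde{\kappa}_t(d\xi) \coloneqq (1 \wedge |\xi|^2)\kappa_t(d\xi)$, whose expected total mass is finite by \eqref{eq:thm_asm}. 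Viewing $\widetilde{\kappa}_t$ as a random element of the Polish space $\mathcal{M}_f(\mathbb{R}^d)$ of finite Borel measures under weak convergence, I disintegrate conditionally on $Z_t$ and divide back by $1 \wedge |\xi|^2$ (using $K(\{0\}) = 0$) to recover $\widehat{\kappa}(t, e, d\xi)$.

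Next I build the canonical space. Let $\mathcal{K}$ denote the Polish space of L\'evy triplets $(\beta, \gamma, K)$, metrized by the Euclidean topology on $(\beta, \gamma) \in \mathbb{R}^d \times \mathbb{S}^d_+$ combined with weak convergence of the finite measures $(1\wedge |\xi|^2)K(d\xi)$, and set $\widehat{\Omega} \coloneqq \mathcal{E} \times D^d_0 \times \mathrm{L}^1_{\mathrm{loc}}(\mathbb{R}_+; \mathcal{K})$ with the product Polish structure. The map $\omega \mapsto (Z_0(\omega), Y(\omega), (b, c, \kappa)_\cdot(\omega))$ pushes $\mathbb{P}$ forward to a reference measure $\mathbb{P}_0$ on $\widehat{\Omega}$. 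For each $n \geq 1$ and the dyadic mesh $t^n_k \coloneqq k 2^{-n}$, I define the concatenated measure $\widehat{\mathbb{P}}^n$ inductively: on $[0, t^n_1)$ use $\mathbb{P}_0$; at $t^n_1$, conditionally on $Z_{t^n_1}$, paste in an independent shift of $\mathbb{P}_0$ starting from that value; iterate across the grid. The three updating-function axioms (initial value, nonanticipativity, Markov property) ensure that the glued paths form a consistent element of $D^{\mathcal{E}}$, and by construction the law of $\widehat{Z}_{t^n_k}$ under $\widehat{\mathbb{P}}^n$ coincides with that of $Z_{t^n_k}$ under $\mathbb{P}$ for every $n, k$.

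Finally I pass to the limit $n \to \infty$. Tightness of $(\widehat{\mathbb{P}}^n)$ on $\widehat{\Omega}$ follows from \eqref{eq:thm_asm}, since the integrated expectations of the characteristic densities are preserved under concatenation; any sub-sequential weak limit $\widehat{\mathbb{P}}$ gives, by continuity of $\Phi$ and a continuity-set argument at fixed times, the marginal matching property (ii). To obtain (i), I identify the characteristics of $\widehat{Y}$ under $\widehat{\mathbb{P}}$ as $(\widehat{b}(s, \widehat{Z}_s), \widehat{c}(s, \widehat{Z}_s), \widehat{\kappa}(s, \widehat{Z}_s, d\xi))$ via a martingale-problem test against a countable determining family in $C_c(\mathbb{R}^d \setminus \{0\})$, combined with a Lebesgue-differentiation/martingale-convergence argument along the dyadic filtration. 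The main obstacle is precisely this identification step: unlike the continuous setting of Brunick--Shreve, where the characteristics are Euclidean-valued, here one must first install a topology on the space of L\'evy kernels that is simultaneously compatible with concatenation and with weak-limit arguments, and then rule out that the limiting compensator $\widehat{\nu}$ is merely a $\xi$-averaged surrogate rather than the desired $\widehat{\kappa}(s, \widehat{Z}_s, d\xi)\,ds$. This is the delicate measure-valued analysis that the introduction flags as the main technical novelty.
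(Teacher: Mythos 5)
Your outline follows the same broad architecture as the paper (extract $\widehat{b},\widehat{c},\widehat{\kappa}$ by disintegration, concatenate along a grid, prove tightness, pass to a weak limit, identify the characteristics), but as written it has three genuine gaps, two of them structural. First, the canonical space $\mathcal{E}\times D^d_0\times \mathrm{L}^1_{\mathrm{loc}}(\mathbb{R}_+;\mathcal{K})$ does not support the Brunick--Shreve concatenation machinery: that machinery requires the non-initial factor to be a $\Delta$-stable \emph{closed subset of a Skorokhod path space}, so that increments $\Delta(x,t)$ are well-defined pathwise objects generating the $\sigma$-algebras $\mathcal{H}_i$, and so that the concatenated measure exists (Theorem~\ref{thm:concat_meas}). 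Elements of $\mathrm{L}^1_{\mathrm{loc}}$ are equivalence classes with no well-defined evaluation or stopping at a fixed time, and it is exactly this obstruction that leads the paper to encode the third characteristic as the running integral $M_t(A)=\int_0^t\int_A 1\wedge|\xi|^2\,\kappa_s(d\xi)\,ds$, living in the $\Delta$-stable, closed, Polish space $C^{\mathcal{M}_+,d}_{0,\mathrm{i}}$ of nondecreasing continuous measure-valued paths, from which $\nu^*$ is reconstructed pathwise and probability-measure-free via a Carath\'eodory extension (Lemma~\ref{lem:cstr_nu}). See Remark~\ref{rem:canonical_space}, where a choice close to yours is discussed and rejected. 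Second, your grid $t^n_k=k2^{-n}$ is deterministic, whereas the argument needs the randomized grid $T^m_i=(U+i-1)/m$ with $U\sim\mathrm{Unif}([0,1])$ adjoined to the canonical space: the approximation step (Lemma~\ref{lem:approx}) that shows $\max_{s\le t}|B^*_s-\overline{B}_s|\to 0$ in $\mathbb{Q}^m$-probability, and its analogues for $\widetilde{C}^*$ and $G^*_k$, works by averaging over the random offset so that the Lebesgue-a.e.\ identities \eqref{eq:cond_exp} are actually seen by the partition; with a fixed dyadic mesh this step is not available.

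Third, and most importantly, you explicitly defer the identification of the limiting characteristics as ``the main obstacle'' rather than resolving it. The paper resolves it concretely: it shows that $Y^*(h)-B^*$, $(Y^*(h)-B^*)(Y^*(h)-B^*)^{\mathrm{T}}-\widetilde{C}^*$ and $f_k*\mu^{Y^*}-f_k*\nu^*$ remain local martingales under every concatenated measure $\mathbb{Q}^m$ (Lemmas~\ref{lem:mtg_1} and \ref{lem:mtg_2}), fixes a \emph{countable} convergence-determining class $(f_k)$ enlarged by the products $h_ih_j$, proves via Proposition~\ref{prop:wconv_int}, Lemma~\ref{lem:joint_wconv} and the approximation lemma that $(Y^*,B^*,\widetilde{C}^*)$ and $(Y^*,\overline{B},\overline{C}^\prime)$ (and likewise $G^*_k$ and $\overline{G}_k$) have the same weak limits, and then invokes \cite{MR1943877}, Theorem~IX.2.4 to conclude that $\widehat{Y}$ is a semimartingale with characteristics \eqref{eq:char_2}. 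Without this chain --- in particular without a topology on the third-characteristic component that is simultaneously Polish, $\Delta$-stable, and compatible with the reconstruction of $\widehat{\nu}$ --- your martingale-problem test at the limit cannot be carried out, so the proposal does not yet constitute a proof.
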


\begin{remark}
	By \eqref{eq:B(h)}, it is easy to check that the integrability condition \eqref{eq:thm_asm} does not depend on the choice of the truncation function $h$. Also, as discussed in \cite{MR4814246}, Equation~(3.6), the third identity of \eqref{eq:cond_exp} is equivalent to the following: for Lebesgue-a.e.\ $t \geq 0$,
	\begin{equation}\label{eq:cond_exp_2}
		\int_{\mathbb{R}^d} f(\xi) \,\widehat{\kappa}(t, Z_t, d\xi)
		= \mathbb{E}\biggl[\int_{\mathbb{R}^d} f(\xi) \,\kappa_t(d\xi) \,\bigg|\, Z_t\biggr],
	\end{equation}
	for all measurable functions $f: \mathbb{R}^d \to \mathbb{R}$ satisfying $|f(\xi)| \leq C (1 \land |\xi|^2)$, $\forall\, \xi \in \mathbb{R}^d$, for some constant $C > 0$. Since $Z$ is a c\`adl\`ag process, we have $Z_t = Z_{t-}$ $\mathbb{P}$-a.s.\ for all but countably many $t \geq 0$. Thus, we may replace $Z_t$ by $Z_{t-}$ in \eqref{eq:cond_exp} and \eqref{eq:cond_exp_2}.
\end{remark}

\begin{remark}
	We emphasize again that a Markovian projection of an It\^o semimartingale is not guaranteed to be a Markov process. As discussed in the introduction, Markovian projections only assume Markovian-type dynamics. We give an example here where the resulting projection is Markov. Consider the simplest case in Example~\ref{eg:1} (process itself). If $Y$ is continuous (i.e.\ $\kappa = 0$) and the dimension $d \leq 2$, then the Markovian projection $\widehat{Y}$ is a true Markov process, as long as $b$, $c$ are bounded, and $c$ is uniformly elliptic. This follows from \eqref{eq:cond_exp} and the classical results of \cite{MR2190038}, Exercise~7.33 ($d=1$) and \cite{MR1483890}, Theorem~VI.4.5 ($d=2$). However, in more general cases, one has to impose extra regularity conditions on $(\widehat{b}, \widehat{c}, \widehat{\kappa})$ to have the Markov property, and we will further discuss it at the end of this section.
\end{remark}

To prove Theorem~\ref{thm:mp}, we adopt similar ideas from Brunick and Shreve~\cite{MR3098443}, which involve the construction of a canonical space, a discretization of time and passage to the limit. However, extra work is needed to deal with the third characteristic of a jump diffusion process. This requires a nontrivial extension of the canonical space from the continuous case. See Section~\ref{sec:4} for the detailed proof. We will also discuss the motivation for our choice of the canonical space at the end of this paper. See Remark~\ref{rem:canonical_space}.

The following corollary works with special semimartingales and their canonical characteristics. As we see in Theorem~\ref{thm:mp}, even if $Y$ is a special semimartingale, we still need to find some truncation function to deal with its large jumps. Sometimes this is unnecessary and complicates the computation, given that the large jumps already have nice integrability. Working with canonical characteristics would be more convenient in this case.

\begin{corollary}\label{cor:mp}
	Let $\mathcal{E}$ be a Polish space. Let $(\Omega, \mathcal{F}, (\mathcal{F}_t)_{t \geq 0}, \mathbb{P})$ be a filtered probability space, with right-continuous filtration, that supports an $\mathcal{E}$-valued $\mathcal{F}_0$-measurable random variable $Z_0$ and an $\mathbb{R}^d$-valued special It\^o semimartingale $Y$ with $Y_0 = 0$ and canonical characteristics triplet $(B, C, \nu)$:
	\begin{equation*}
		B_t = \int_0^t b_s \,ds,\quad
		C_t = \int_0^t c_s \,ds,\quad
		\nu([0, t] \times A) = \int_0^t \kappa_s(A) \,ds,
	\end{equation*}
	where $b$ is an $\mathbb{R}^d$-valued predictable process, $c$ is an $\mathbb{S}^d_+$-valued predictable process, and $\kappa$ is a predictable L\'evy transition kernel from $\Omega \times \mathbb{R}_+$ to $\mathbb{R}^d$. Suppose that $(b, c, \kappa)$ satisfy
	\begin{equation}\label{eq:cor_asm}
		\mathbb{E}\biggl[\int_0^t \biggl(|b_s| + |c_s| + \int_{\mathbb{R}^d} |\xi| \land |\xi|^2 \,\kappa_s(d\xi)\biggr) \,ds\biggr] < \infty,\quad
		\forall\, t > 0.
	\end{equation}
	Let $\Phi: \mathcal{E} \times D^d_0 \to D^{\mathcal{E}}$ be a continuous updating function, and let $Z = \Phi(Z_0, Y)$. Then, there exist measurable functions $\widehat{b}: \mathbb{R}_+ \times \mathcal{E} \to \mathbb{R}^d$, $\widehat{c}: \mathbb{R}_+ \times \mathcal{E} \to \mathbb{S}_+^d$, and a L\'evy transition kernel $\widehat{\kappa}$ from $\mathbb{R}_+ \times \mathcal{E}$ to $\mathbb{R}^d$ such that for Lebesgue-a.e.\ $t \geq 0$,
	\begin{equation}\label{eq:cor_cond_exp}
		\begin{split}
			\widehat{b}(t, Z_t) &= \mathbb{E}[b_t \,|\, Z_t],\\
			\widehat{c}(t, Z_t) &= \mathbb{E}[c_t \,|\, Z_t],\\
			\int_A |\xi| \land |\xi|^2 \,\widehat{\kappa}(t, Z_t, d\xi)
			&= \mathbb{E}\biggl[\int_A |\xi| \land |\xi|^2 \,\kappa_t(d\xi) \,\bigg|\, Z_t\biggr],\quad
			\forall\, A \in \mathcal{B}(\mathbb{R}^d).
		\end{split}
	\end{equation}
	Furthermore, there exists a filtered probability space $(\widehat{\Omega}, \widehat{\mathcal{F}}, (\widehat{\mathcal{F}}_t)_{t \geq 0}, \widehat{\mathbb{P}})$, with right-continuous filtration, that supports an $\mathcal{E}$-valued $\widehat{\mathcal{F}}_0$-measurable random variable $\widehat{Z}_0$ and an $\mathbb{R}^d$-valued c\`adl\`ag process $\widehat{Y}$ with $\widehat{Y}_0 = 0$ such that:
	\begin{enumerate}[label=(\roman*), topsep=0.3em, noitemsep]
		\item $\widehat{Y}$ is a special It\^o semimartingale with canonical characteristics triplet $(\widehat{B}, \widehat{C}, \widehat{\nu})$:
		\begin{equation*}
			\widehat{B}_t = \int_0^t \widehat{b}(s, \widehat{Z}_s) \,ds,\quad
			\widehat{C}_t = \int_0^t \widehat{c}(s, \widehat{Z}_s) \,ds,\quad
			\widehat{\nu}([0, t] \times A) = \int_0^t \widehat{\kappa}(s, \widehat{Z}_s, A) \,ds,
		\end{equation*}
		where $\widehat{Z} = \Phi(\widehat{Z}_0, \widehat{Y})$,
		
		\item for each $t \geq 0$, the law of $\widehat{Z}_t$ under $\widehat{\mathbb{P}}$ agrees with the law of $Z_t$ under $\mathbb{P}$.
	\end{enumerate}
\end{corollary}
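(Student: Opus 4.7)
The plan is to reduce Corollary~\ref{cor:mp} to Theorem~\ref{thm:mp} by choosing a convenient truncation function, then converting the outputs from ``truncated'' form to canonical form using the relations \eqref{eq:B(h)} and \eqref{eq:B_B(h)}.

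\textbf{Step 1: Reduction to the main theorem.} Since $1 \land |\xi|^2 \leq |\xi| \land |\xi|^2$ pointwise (both equal $|\xi|^2$ for $|\xi| \leq 1$, and $1 \leq |\xi|$ for $|\xi| > 1$), assumption \eqref{eq:cor_asm} implies \eqref{eq:thm_asm}. Fix the truncation function $h(\xi) \coloneqq \xi \bm{1}_{\{|\xi| \leq 1\}}$ and let $b_h$ denote the drift of $Y$ associated with $h$. Since $Y$ is special with canonical drift $b$, equation \eqref{eq:B_B(h)} applied in differential form gives $b_s = b_h(s) + \int_{\mathbb{R}^d} (\xi - h(\xi)) \,\kappa_s(d\xi)$ (this integral is absolutely convergent by \eqref{eq:cor_asm}). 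Apply Theorem~\ref{thm:mp} to $Y$ with this $h$ to obtain measurable functions $\widehat{b}_h$, $\widehat{c}$, $\widehat{\kappa}$, a probability space $(\widehat{\Omega}, \widehat{\Fcal}, (\widehat{\Fcal}_t), \widehat{\P})$, and a c\`adl\`ag It\^o semimartingale $\widehat{Y}$ whose characteristics associated with $h$ are given by \eqref{eq:char_2}.

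\textbf{Step 2: Extending the jump-kernel identity.} I will extend the third line of \eqref{eq:cond_exp} from integrands dominated by $C(1 \land |\xi|^2)$ to integrands dominated by $C(|\xi| \land |\xi|^2)$. For $A \in \Bcal(\R^d)$ set $f_n(\xi) \coloneqq (|\xi| \land |\xi|^2) \bm{1}_A(\xi) \bm{1}_{\{|\xi| \leq n\}}$; then $|f_n(\xi)| \leq n(1 \land |\xi|^2)$, so by the extended form \eqref{eq:cond_exp_2} of \eqref{eq:cond_exp} applied to $f_n$,
\begin{equation*}
    \int f_n(\xi) \,\widehat{\kappa}(t, Z_t, d\xi)
    = \E\biggl[\int f_n(\xi) \,\kappa_t(d\xi) \,\bigg|\, Z_t\biggr],
    \quad \text{Leb-a.e.\ } t.
\end{equation*}
Sending $n \to \infty$ and using monotone convergence on the left and conditional monotone convergence on the right (valid because $\int (|\xi| \land |\xi|^2) \kappa_t(d\xi)$ is $\P$-integrable for Leb-a.e.\ $t$ by \eqref{eq:cor_asm}), I obtain the third identity in \eqref{eq:cor_cond_exp}. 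In particular, applying this with $f(\xi) = \xi - h(\xi) = \xi \bm{1}_{\{|\xi| > 1\}}$ (which satisfies $|f(\xi)| \leq |\xi| \land |\xi|^2$) yields
\begin{equation*}
    \int (\xi - h(\xi)) \,\widehat{\kappa}(t, Z_t, d\xi)
    = \E\biggl[\int (\xi - h(\xi)) \,\kappa_t(d\xi) \,\bigg|\, Z_t\biggr].
\end{equation*}

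\textbf{Step 3: Constructing the canonical drift.} Define $\widehat{b}(t, z) \coloneqq \widehat{b}_h(t, z) + \int (\xi - h(\xi)) \,\widehat{\kappa}(t, z, d\xi)$, which is measurable in $(t, z)$ by Fubini. Combining $b_s = b_h(s) + \int (\xi - h(\xi)) \kappa_s(d\xi)$ with the first line of \eqref{eq:cond_exp} and the identity just derived gives $\widehat{b}(t, Z_t) = \E[b_t \mid Z_t]$ for Leb-a.e.\ $t$, establishing the first line of \eqref{eq:cor_cond_exp}. The second line is immediate from \eqref{eq:cond_exp}.

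\textbf{Step 4: Canonical characteristics of $\widehat{Y}$.} It remains to show $\widehat{Y}$ is special with canonical drift $\int_0^\cdot \widehat{b}(s, \widehat{Z}_s)\,ds$. By the characterization in \cite{MR1943877}, it suffices that $\int_0^\cdot \int_{\{|\xi|>1\}} |\xi| \,\widehat{\nu}(ds,d\xi)$ be locally integrable. Since the law of $\widehat{Z}_s$ under $\widehat{\P}$ equals the law of $Z_s$ under $\P$, and using the extended kernel identity with the function $|\xi| \bm{1}_{\{|\xi|>1\}} \bm{1}_A$:
\begin{equation*}
    \widehat{\E}\biggl[\int_0^t \int_{\{|\xi|>1\}} |\xi| \,\widehat{\nu}(ds,d\xi)\biggr]
    = \E\biggl[\int_0^t \int_{\{|\xi|>1\}} |\xi| \,\widehat{\kappa}(s, Z_s, d\xi)\,ds\biggr]
    = \E\biggl[\int_0^t \int_{\{|\xi|>1\}} |\xi| \,\kappa_s(d\xi)\,ds\biggr],
\end{equation*}
which is finite by \eqref{eq:cor_asm}. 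Thus $\widehat{Y}$ is special, and \eqref{eq:B_B(h)} applied to $\widehat{Y}$ gives the canonical drift $\widehat{B}_t + \int_0^t \int (\xi - h(\xi)) \widehat{\kappa}(s, \widehat{Z}_s, d\xi)\,ds = \int_0^t \widehat{b}(s, \widehat{Z}_s)\,ds$, as required. The matching of one-dimensional marginal laws in (ii) is inherited directly from Theorem~\ref{thm:mp}. The main obstacle is the extension of the kernel conditional-expectation identity in Step~2, but monotone convergence handles it cleanly thanks to the strengthened assumption \eqref{eq:cor_asm}.
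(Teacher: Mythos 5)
Your proposal is correct and follows essentially the same route as the paper: fix a truncation function, apply Theorem~\ref{thm:mp} to $(b^h,c,\kappa)$, extend the jump-kernel identity from integrands dominated by $1\land|\xi|^2$ to those dominated by $|\xi|\land|\xi|^2$ (your monotone-convergence argument is the ``approximation'' the paper invokes), verify specialness of $\widehat{Y}$ via the expectation computation using equality of one-dimensional marginals, and shift the drift by $\int(\xi-h(\xi))\,\widehat{\kappa}(t,z,d\xi)$ using \eqref{eq:B_B(h)}. The only cosmetic point you leave implicit is that $\int(\xi-h(\xi))\,\widehat{\kappa}(t,z,d\xi)$ must be finite for $\widehat{b}(t,z)$ to be well-defined everywhere (the paper arranges this in the construction of $\widehat{\kappa}$; alternatively one sets $\widehat{b}=\widehat{b}^h$ on the negligible set where it diverges).
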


\begin{remark}
	Note that if $Y$ is an It\^o semimartingale (a priori not special) with differential characteristics $(b(h),c,\kappa)$ associated with some truncation function $h$, and the triplet satisfies \eqref{eq:cor_asm}, then $Y$ is necessarily a special semimartingale and its canonical differential characteristics also satisfy \eqref{eq:cor_asm}. This follows from \eqref{eq:B_B(h)}.
	
	Conversely, if $Y$ is already a special semimartingale but $(b(h),c,\kappa)$ only satisfy \eqref{eq:thm_asm}, we still cannot apply Theorem~\ref{thm:mp} to its canonical characteristics. To use Corollary~\ref{cor:mp} which does not involve any truncation functions, we have to pay the price of a stronger integrability condition \eqref{eq:cor_asm}.
	
	Finally, as is in Theorem~\ref{thm:mp}, we may replace $Z_t$ by $Z_{t-}$ in \eqref{eq:cor_cond_exp}.
\end{remark}

\begin{proof}[Proof of Corollary~\ref{cor:mp}]
	Let $h: \mathbb{R}^d \to \mathbb{R}^d$ be a truncation function, and let $(B^h, C, \nu)$ be the characteristics of $Y$ associated with $h$. Denote
	\begin{equation*}
		B^h_t = \int_0^t b^h_s \,ds,
	\end{equation*}
	where $b^h$ is an $\mathbb{R}^d$-valued predictable process. By \eqref{eq:B_B(h)} and \eqref{eq:cor_asm}, it is easy to check that $(b^h, c, \kappa)$ satisfy \eqref{eq:thm_asm}. Then, Theorem~\ref{thm:mp} yields measurable functions $\widehat{b}^h: \mathbb{R}_+ \times \mathcal{E} \to \mathbb{R}^d$, $\widehat{c}: \mathbb{R}_+ \times \mathcal{E} \to \mathbb{S}_+^d$, and a L\'evy transition kernel $\widehat{\kappa}$ from $\mathbb{R}_+ \times \mathcal{E}$ to $\mathbb{R}^d$ such that $(\widehat{c}, \widehat{\kappa})$ and $(c, \kappa)$ satisfy \eqref{eq:cond_exp}, thus \eqref{eq:cor_cond_exp} by approximation,\footnote{By approximation, \eqref{eq:cond_exp} implies $\int_{\mathbb{R}^d} f(\xi) (1 \land |\xi|^2) \,\widehat{\kappa}(t, Z_t, d\xi) = \mathbb{E}[\int_{\mathbb{R}^d} f(\xi) (1 \land |\xi|^2) \,\kappa_t(d\xi) \,|\, Z_t]$ for all nonnegative measurable $f$. By taking $f(\xi) = \bm{1}_A(\xi) (|\xi| \land |\xi|^2) / (1 \land |\xi|^2)$, we get \eqref{eq:cor_cond_exp}.} and for Lebesgue-a.e.\ $t \geq 0$,
	\begin{equation*}
		\widehat{b}^h(t, Z_t)
		= \mathbb{E}[b^h_t \,|\, Z_t].
	\end{equation*}
	If we take a closer look at the construction of $\widehat{\kappa}$, which is based on \cite{MR4814246}, Lemma~2.5 (with an obvious extension to $\mathcal{E}$-valued processes and transition kernels from $\mathbb{R}_+ \times \mathcal{E}$ to $\mathbb{R}^d$), one may require $\widehat{\kappa}$ to satisfy the following property due to \eqref{eq:cor_asm}:
	\begin{equation*}
		\int_{\mathbb{R}^d} |\xi| \land |\xi|^2 \,\widehat{\kappa}(t, z, d\xi) < \infty,\quad
		\forall\, t \geq 0,\, z \in \mathcal{E}.
	\end{equation*}
	Moreover, Theorem~\ref{thm:mp} yields a filtered probability space $(\widehat{\Omega}, \widehat{\mathcal{F}}, (\widehat{\mathcal{F}}_t)_{t \geq 0}, \widehat{\mathbb{P}})$, with right-continuous filtration, that supports an $\mathcal{E}$-valued random variable $\widehat{Z}_0$ and an $\mathbb{R}^d$-valued It\^o semimartingale $\widehat{Y}$ with initial value $0$ and characteristics $(\widehat{B}^h, \widehat{C}, \widehat{\nu})$ associated with $h$, such that the one-dimensional marginal laws of $\widehat{Z} = \Phi(\widehat{Z}_0, \widehat{Y})$ agree with $Z$. Here $\widehat{C}$ and $\widehat{\nu}$ are defined as in \eqref{eq:char_2}, and
	\begin{equation*}
		\widehat{B}^h_t = \int_0^t \widehat{b}^h(s, \widehat{Z}_s) \,ds.
	\end{equation*}
	
	It only remains to show $\widehat{Y}$ is a special semimartingale with the desired canonical characteristics. From \eqref{eq:cor_asm}, \eqref{eq:cor_cond_exp} and the fact that $\widehat{Z}$ and $Z$ have the same one-dimensional marginal laws, we get
	\begin{equation*}
		\widehat{\mathbb{E}}\biggl[\int_0^t \int_{\mathbb{R}^d} |\xi| \land |\xi|^2 \,\widehat{\kappa}(s, \widehat{Z}_s, d\xi) \,ds\biggr]
		= \mathbb{E}\biggl[\int_0^t \int_{\mathbb{R}^d} |\xi| \land |\xi|^2 \,\widehat{\kappa}(s, Z_s, d\xi) \,ds\biggr] < \infty,\quad
		\forall\, t \geq 0.
	\end{equation*}
	Thus, \cite{MR1943877}, Proposition~II.2.29(a) implies that $\widehat{Y}$ is a special semimartingale. We define the measurable function $\widehat{b}: \mathbb{R}_+ \times \mathcal{E} \to \mathbb{R}^d$ via
	\begin{equation*}
		\widehat{b}(t, z)
		\coloneqq \widehat{b}^h(t, z) + \int_{\mathbb{R}^d} (\xi - h(\xi)) \,\widehat{\kappa}(t, z, d\xi),\quad
		t \geq 0,\, z \in \mathcal{E},
	\end{equation*}
	which we will show to be independent of the choice of $h$. Then by \eqref{eq:B_B(h)}, the first canonical characteristic $\widehat{B}$ of $\widehat{Y}$ is given by
	\begin{equation*}
		\begin{split}
			\widehat{B}_t
			&= \widehat{B}^h_t + \int_{[0, t] \times \mathbb{R}^d} (\xi - h(\xi)) \,\widehat{\nu}(ds, d\xi)\\
			&= \int_0^t \widehat{b}^h(s, \widehat{Z}_s) \,ds + \int_0^t \int_{\mathbb{R}^d} (\xi - h(\xi)) \,\widehat{\kappa}(s, \widehat{Z}_s, d\xi) \,ds
			= \int_0^t \widehat{b}(s, \widehat{Z}_s) \,ds.
		\end{split}
	\end{equation*}
	Another application of \eqref{eq:B_B(h)} shows that for Lebesgue-a.e.\ $t \geq 0$,
	\begin{equation*}
		\begin{split}
			\widehat{b}(t, Z_t)
			&= \widehat{b}^h(t, Z_t) + \int_{\mathbb{R}^d} (\xi - h(\xi)) \,\widehat{\kappa}(t, Z_t, d\xi)\\
			&= \mathbb{E}[b^h_t \,|\, Z_t] + \mathbb{E}\biggl[\int_{\mathbb{R}^d} (\xi - h(\xi)) \,\kappa_t(d\xi) \,\bigg|\, Z_t\biggr]
			= \mathbb{E}[b_t \,|\, Z_t].
		\end{split}
	\end{equation*}
	This finishes the proof.
\end{proof}

The following example is an application of Corollary~\ref{cor:mp} and shows that Markovian projections preserve iterated integral structures.

\begin{example}
	Let $X$ be a real-valued special It\^o semimartingale with $X_0 = 0$ and canonical differential characteristics $(b, c, \kappa)$. Let $Y = \int_0^\cdot X_{s-} \,dX_s$, which is also a special It\^o semimartingale. One can write $X$ and $Y$ in the form of iterated integrals: $X_t = \int_0^t dX_s$ and $Y_t = \int_0^t \int_0^{s-} dX_u \,dX_s$. Note that $\Delta Y = X_- \Delta X$, so we have (recall Definition~\ref{def:char}(iii))
	\begin{equation*}
		\mu^{(X, Y)} = \mu^X \circ ((t, \xi) \mapsto (t, \xi, X_{t-} \xi))^{-1},
	\end{equation*}
	and we can easily compute the canonical differential characteristics of $(X, Y)$:
	\begin{equation*}
		(1, X_{t-}) b_t,\quad
		\begin{pmatrix}
			1      & X_{t-}\\
			X_{t-} & X_{t-}^2
		\end{pmatrix} c_t,\quad
		\kappa_t \circ (\xi \mapsto (\xi, X_{t-} \xi))^{-1}.
	\end{equation*}
	Assume that
	\begin{equation*}
		\begin{split}
			\mathbb{E}\biggl[\int_0^t \biggl(&(1 + |X_s|) |b_s| + (1 + |X_s|^2) |c_s|\\ &+ \int_{\mathbb{R}} (1 + |X_s|) |\xi| \land (1 + |X_s|^2) |\xi|^2 \,\kappa_s(d\xi)\biggr) \,ds\biggr] < \infty,\quad
			\forall\, t > 0.
		\end{split}
	\end{equation*}
	Then, applying Corollary~\ref{cor:mp} to the special It\^o semimartingale $(X, Y)$, the initial value $Z_0 = (0, 0)$ and the updating function $\Phi(e, x) = e + x$ for $e \in \mathbb{R}^2$, $x \in D^2_0$, we know that the Markovian projection $(\widehat{X}, \widehat{Y})$ is a special It\^o semimartingale whose canonical differential characteristics have the form
	\begin{equation*}
		(1, \widehat{X}_{t-}) \widehat{b}(t, \widehat{X}_{t-}, \widehat{Y}_{t-}),\quad
		\begin{pmatrix}
			1                & \widehat{X}_{t-}\\
			\widehat{X}_{t-} & \widehat{X}_{t-}^2
		\end{pmatrix} \widehat{c}(t, \widehat{X}_{t-}, \widehat{Y}_{t-}),\quad
		\widehat{\kappa}(t, \widehat{X}_{t-}, \widehat{Y}_{t-}, \cdot) \circ (\xi \mapsto (\xi, \widehat{X}_{t-} \xi))^{-1}.
	\end{equation*}
	It follows that
	\begin{equation*}
		\begin{split}
			&\widehat{\mathbb{E}}\Bigl[\mu^{(\widehat{X}, \widehat{Y})}(\{(t, \xi, \eta) \in \mathbb{R}_+ \times \mathbb{R}^2: \eta \neq \widehat{X}_{t-} \xi\})\Bigr]\\
			&\quad\quad\quad\quad= \widehat{\mathbb{E}}\biggl[\int_0^\infty \int_{\mathbb{R}} \bm{1}_{\{(x, y) \in \mathbb{R}^2: y \neq \widehat{X}_{t-} x\}}(\xi, \widehat{X}_{t-} \xi) \,\widehat{\kappa}(t, \widehat{X}_{t-}, \widehat{Y}_{t-}, d\xi) \,dt\biggr]
			= 0,
		\end{split}
	\end{equation*}
	i.e.\ $\mu^{(\widehat{X}, \widehat{Y})}(\{(t, \xi, \eta) \in \mathbb{R}_+ \times \mathbb{R}^2: \eta \neq \widehat{X}_{t-} \xi\}) = 0$ $\widehat{\mathbb{P}}$-a.s. This implies that $\Delta \widehat{Y} = \widehat{X}_- \Delta \widehat{X}$ $\widehat{\mathbb{P}}$-a.s. Thus, by \cite{MR1943877}, Corollary~II.2.38, we have the canonical decomposition of $\widehat{X}$ and $\widehat{Y}$:
	\begin{equation*}
		\begin{split}
			\widehat{X}_t &= \int_0^t \widehat{b}(s, \widehat{X}_{s-}, \widehat{Y}_{s-}) \,ds + \widehat{X}^c_t + \int_{[0, t] \times \mathbb{R}} \xi \,(\mu^{\widehat{X}}(ds, d\xi) - \widehat{\kappa}(s, \widehat{X}_{s-}, \widehat{Y}_{s-}, d\xi)ds),\\
			\widehat{Y}_t &= \int_0^t \widehat{X}_{s-} \widehat{b}(s, \widehat{X}_{s-}, \widehat{Y}_{s-}) \,ds + \widehat{Y}^c_t + \int_{[0, t] \times \mathbb{R}} \widehat{X}_{s-} \xi \,(\mu^{\widehat{X}}(ds, d\xi) - \widehat{\kappa}(s, \widehat{X}_{s-}, \widehat{Y}_{s-}, d\xi)ds).
		\end{split}
	\end{equation*}
	The predictable finite variation term is straightforward. For the continuous local martingale term, note that $\langle \widehat{Y}^c - \widehat{X}_- \cdot \widehat{X}^c, \widehat{Y}^c - \widehat{X}_- \cdot \widehat{X}^c \rangle$ = 0, so we have $\widehat{Y}^c = \widehat{X}_- \cdot \widehat{X}^c$ $\widehat{\mathbb{P}}$-a.s. For the purely discontinuous local martingale term, we use \cite{MR1943877}, Proposition~II.1.30(b). From these we conclude that $\widehat{Y} = \int_0^\cdot \widehat{X}_{s-} \,d\widehat{X}_s$, so the iterated integral structure is preserved. This example can be easily extended to $\mathbb{R}^d$-valued special It\^o semimartingales $X$, and higher order iterated integrals.
\end{example}

In practice, when we want to apply Theorem~\ref{thm:mp} or Corollary~\ref{cor:mp} to an It\^o semimartingale $Y$, it is very common that $Y$ comes from an SDE possibly with some exogenous random sources, or equivalently, $Y$ is a component of a solution to a larger system of SDEs. The following example gives a class of such processes $Y$ that satisfy the assumption of our main results, so we can talk about their Markovian projections. To avoid the extra terms introduced by truncation functions and make our presentation concise, we consider the case of special semimartingales.

\begin{example}
	Let $(Y, Y^\prime)$ be an $(\mathbb{R}^d \times \mathbb{R}^{d^\prime})$-valued special It\^o semimartingale with $(Y_0, Y^\prime_0) = (0, 0)$ and canonical differential characteristics of the form\footnote{It suffices to assume time-homogeneity since we can always absorb the time variable $t$ into the process $Y^\prime$.}
	\begin{equation*}
		B(Y_t, Y^\prime_t),\quad
		C(Y_t, Y^\prime_t),\quad
		\mathrm{K}(Y_t, Y^\prime_t, d\xi, d\xi^\prime),
	\end{equation*}
	where $B: \mathbb{R}^{d + d^\prime} \to \mathbb{R}^{d + d^\prime}$, $C: \mathbb{R}^{d + d^\prime} \to \mathbb{S}_+^{d + d^\prime}$ are measurable functions, and $\mathrm{K}$ is a L\'evy transition kernel from $\mathbb{R}^{d + d^\prime}$ to $\mathbb{R}^{d + d^\prime}$. In other words, $(Y, Y^\prime)$ jointly solve a Markovian SDE
	\begin{equation*}
		\begin{split}
			(Y_t, Y^\prime_t) = \int_0^t B(Y_s, Y^\prime_s) \,ds &+ \int_0^t C^{1/2}(Y_{s-}, Y^\prime_{s-}) \,dW_s\\ &+ \int_{[0, t] \times \mathbb{R}^{d + d^\prime}} (\xi, \xi^\prime) \,(\mu^{(Y, Y^\prime)}(ds, d\xi, d\xi^\prime) - \mathrm{K}(Y_s, Y^\prime_s, d\xi, d\xi^\prime)ds),
		\end{split}
	\end{equation*}
	where $C^{1/2}$ is the symmetric square-root of $C$, and $W$ is a $(d + d^\prime)$-dimensional Brownian motion. Suppose that $(B, C, \mathrm{K})$ satisfy
	\begin{equation}\label{eq:quad_growth}
		\begin{split}
			|B(y, y^\prime)|^2 + |C(y, y^\prime)| + \int_{\mathbb{R}^{d + d^\prime}} (|\xi|^2 + |\xi^\prime|^2) \,\mathrm{K}(y, y^\prime, d\xi, d\xi^\prime)
			\leq C(1 + |y|^2 + |y^\prime|^2),\\
			\forall\, (y, y^\prime) \in \mathbb{R}^d \times \mathbb{R}^{d^\prime},
		\end{split}
	\end{equation}
	for some constant $C > 0$. Then, by the Burkholder--Davis--Gundy inequality, and \cite{MR1943877}, Theorem~I.4.52, II.1.8, we have the following estimate
	\begin{equation*}
		\begin{split}
			&\mathbb{E}[|Y_t|^2 + |Y^\prime_t|^2]
			\leq C_1\mathbb{E}\Biggl[\biggl|\int_0^t B(Y_s, Y^\prime_s) \,ds\biggr|^2 + \int_0^t |\mathrm{tr}(C(Y_s, Y^\prime_s))| \,ds + \sum_{s \leq t} (|Y_s|^2 + |Y^\prime_s|^2)\Biggr]\\
			&\quad\quad\quad\leq C_2 \mathbb{E}\biggl[\int_0^t \biggl(t|B(Y_s, Y^\prime_s)|^2 + |C(Y_s, Y^\prime_s)| + \int_{\mathbb{R}^{d + d^\prime}} (|\xi|^2 + |\xi^\prime|^2) \,\mathrm{K}(Y_s, Y^\prime_s, d\xi, d\xi^\prime)\biggr) \,ds\biggr],
		\end{split}
	\end{equation*}
	for some constants $C_1, C_2 > 0$. Using assumption \eqref{eq:quad_growth} and a Gr\"onwall-type of argument, it is easy to check that $t \mapsto \mathbb{E}[|Y_t|^2 + |Y^\prime_t|^2]$ is locally bounded. Thus, $Y$ satisfies \eqref{eq:cor_asm} with
	\begin{equation*}
		b_t = b(Y_t, Y^\prime_t),\quad
		c_t = c(Y_t, Y^\prime_t),\quad
		\kappa_t(d\xi) = \kappa(Y_t, Y^\prime_t, d\xi),
	\end{equation*}
	where $b$ is the first $d$ coordinates of $B$, $c$ is the upper left $d \times d$ submatrix of $C$, and $\kappa(y, y^\prime, d\xi)$ is the push forward of $\mathrm{K}(y, y^\prime, d\xi, d\xi^\prime)$ under the projection map $(\xi, \xi^\prime) \mapsto \xi$ for each $(y, y^\prime) \in \mathbb{R}^d \times \mathbb{R}^{d^\prime}$.
\end{example}

As mentioned in the introduction, the assumption \eqref{eq:thm_asm} of our main results is weaker than those in existing literature. We next present an example which fits our framework, but does not satisfy \cite{MR4814246}, Equation~(3.4).

\begin{example}
	Let $Y$ be a real-valued doubly stochastic compound Poisson process with $Y_0 = 0$, intensity process $\lambda$ and jump size distribution $\nu$. That is to say, $Y$ is of the form $Y_t = \sum_{k=1}^\infty J_k \bm{1}_{\{\tau_k \leq t\}}$, where $\tau_0 \coloneqq 0$, $\tau_k \coloneqq \inf\bigl\{t > \tau_{k-1}: \int_{\tau_{k-1}}^t \lambda_s \,ds \geq E_k\bigr\}$ for $k \geq 1$, $(E_k)_{k \in \mathbb{N}^*}$ is a sequence of i.i.d.\ $\mathrm{Exp}(1)$ random variables, and $(J_k)_{k \in \mathbb{N}^*}$ is a sequence of i.i.d.\ $\nu$-distributed random variables, such that $\lambda$, $(E_k)_{k \in \mathbb{N}^*}$ and $(J_k)_{k \in \mathbb{N}^*}$ are independent. Suppose that $1 \leq \lambda \leq 2$ $(\mathbb{P} \otimes dt)$-a.e.\ and
	\begin{equation*}
		\nu(\{0\}) = 0,\quad
		\int_{\mathbb{R}} 1 \land |\xi|^2 \,\nu(d\xi) < \infty,\quad
		\int_{\{|\xi| > 1\}} \log(1 + |\xi|) \,\nu(d\xi) = \infty.
	\end{equation*}
	It is a standard fact (see e.g.\ \cite{MR4976813}, Theorem~2.14) that $Y$ is an It\^o semimartingale whose differential characteristics associated with a truncation function $h: \mathbb{R} \to \mathbb{R}$ are
	\begin{equation*}
		b_t = \mathbb{E}[h(J_1)] \lambda_t,\quad
		c_t = 0,\quad
		\kappa_t(d\xi) = \lambda_t \nu(d\xi).
	\end{equation*}
	Then, it is easy to see that $(b, c, \kappa)$ satisfy \eqref{eq:thm_asm}, so we can apply Theorem~\ref{thm:mp} to $Y$, say with the initial value $Z_0 = 0$ and the updating function $\Phi(e, x) = e + x$ for $e \in \mathbb{R}$, $x \in D^1_0$. However, taking conditional expectation $\mathbb{E}[\cdot \,|\, Y_t]$, we have
	\begin{equation*}
		\widehat{b}(t, y) = \mathbb{E}[h(J_1)] \widehat{\lambda}(t, y),\quad
		\widehat{c}(t, y) = 0,\quad
		\widehat{\kappa}(t, y, d\xi) = \widehat{\lambda}(t, y) \nu(d\xi),
	\end{equation*}
	where $\widehat{\lambda}: \mathbb{R}_+ \times \mathbb{R} \to [1, 2]$ is a measurable function such that $\mathbb{E}[\lambda_t \,|\, Y_t] = \widehat{\lambda}(t, Y_t)$ for Lebesgue-a.e.\ $t \geq 0$. Since $\log(1 + \lvert\cdot\rvert)$ is not integrable with respect to $\nu$ around infinity, it follows that $(\widehat{b}, \widehat{c}, \widehat{\kappa})$ do not satisfy \cite{MR4814246}, Equation~(3.4) for any truncation function of the form $h(x) = x \bm{1}_{\{|x| \leq r\}}$ with $r > 0$, thus Theorem~3.2 therein does not apply to $Y$.
\end{example}

We end this section by making some comments on the uniqueness in law and the Markov property of the mimicking process. In Theorem~\ref{thm:mp}, we see from the specific forms of the characteristics of $\widehat{Y}$ and the definition of $\Phi$ that the mimicking process $\widehat{Z}$ has Markovian-type dynamics. Even with the simplest updating function given by Example~\ref{eg:1} (process itself), there is no guarantee that $\widehat{Z}$ is a true Markov process, or is unique in law, unless we are willing to impose further regularity assumptions on $\widehat{b}$, $\widehat{c}$ and $\widehat{\kappa}$. In practice, we use \eqref{eq:cond_exp} to compute these coefficients. If there are sufficiently ``nice'' versions, then properties beyond existence may hold. See e.g.\ \cite{bentata2012mimicking}, \cite{MR359017}, \cite{MR838085}, \cite{MR1248747}, \cite{MR1297694} for various conditions on non-local generators that imply the uniqueness and/or the Markov property of martingale solutions. For example, \cite{MR359017}, Theorem~4.2 requires the following sufficient condition:
\begin{enumerate}[label=(\roman*), topsep=0.3em, noitemsep]
	\item $\widehat{b}(t, x)$ is locally bounded,
	
	\item $\widehat{c}(t, x)$ is locally bounded, locally uniformly elliptic, and continuous in $x$ uniformly in $t \leq T$ for any $T > 0$,
	
	\item $\widehat{\kappa}(t, x, d\xi)$ is dominated by some L\'evy measure uniformly in $(t, x) \in D$ for any bounded set $D \subset \mathbb{R}_+ \times \mathbb{R}^d$.
\end{enumerate}
However, it is not straightforward to impose assumptions only on the differential characteristics $(b, c, \kappa)$ of the original process that translate to desired regularity conditions on the coefficients $(\widehat{b}, \widehat{c}, \widehat{\kappa})$ of the mimicking process, not to mention the possibly complicated structure of the updating function $\Phi$. Thus, in this paper we focus on existence results in general settings. See also \cite{MR3098443}, Section~3 for a discussion on the uniqueness and the Markov property.

\section{Preliminary Results}\label{sec:3}
In this section we introduce some notation and preliminary results needed for proving our main theorem. Some of the contents here are analogous to those in Brunick and Shreve~\cite{MR3098443}.

\subsection{Canonical Space}\label{sec:canonical}
Let $\mathcal{E}$ be a Polish space. Recall that in Section~\ref{sec:upd_fn}, we defined the difference operator $\Delta: D^{\mathcal{E}} \times \mathbb{R_+} \to D^{\mathcal{E}}_0$ if $\mathcal{E}$ is also a vector space. However, when $\mathcal{E}$ is not a vector space, the difference operator $\Delta$ may not be well-defined, either because the expression $\Delta(x, t)(s) = x(t + s) - x(t)$ has no meaning, or it is defined but does not belong to $\mathcal{E}$ (e.g.\ when $\mathcal{E}$ is a subset of a vector space). This leads to the following definition.

\begin{definition}\label{def:d_invar}
	Let $\mathcal{E}$ be a Polish space, which is also a subset of a vector space with $0 \in \mathcal{E}$. We say a subset $\mathcal{X}$ of $D^{\mathcal{E}}_0$ is $\Delta$-\emph{stable}, if $\Delta(x, t) \in \mathcal{X}$ for all $x \in \mathcal{X}$ and $t \geq 0$.
\end{definition}

Now we fix two Polish spaces $\mathcal{E}$ and $\mathcal{E}^\prime$, where $\mathcal{E}^\prime$ is a closed convex cone in a topological vector space.\footnote{Note that the topological vector space containing $\mathcal{E}^\prime$ does not need to be a Polish space. We do not even assume that its topology is metrizable.} We fix a space $\mathcal{X}$ which is a $\Delta$-stable closed subset of $D^{\mathcal{E}^\prime}_0$. By Definition~\ref{def:d_invar}, the map $\Delta: \mathcal{X} \times \mathbb{R}_+ \to \mathcal{X}$ is well-defined. Our canonical space is defined by $\Omega^{\mathcal{E}, \mathcal{X}} \coloneqq \mathcal{E} \times \mathcal{X}$. We endow $\Omega^{\mathcal{E}, \mathcal{X}}$ with the product topology, and we know $\Omega^{\mathcal{E}, \mathcal{X}}$ is a Polish space. For a generic element $\omega \in \Omega^{\mathcal{E}, \mathcal{X}}$, we denote it by $\omega = (e, x)$, and we define the projections
\begin{equation*}
	E(\omega) \coloneqq e,\quad
	X(\omega) \coloneqq x.
\end{equation*}
Let $\mathcal{F}^{\mathcal{E}, \mathcal{X}} = \sigma(E, X)$ be the Borel $\sigma$-algebra on $\Omega^{\mathcal{E}, \mathcal{X}}$, which can also be equivalently defined by $\sigma(E, X_t; t \geq 0)$. We also define the natural filtration $(\mathcal{F}^{\mathcal{E}, \mathcal{X}}_t)_{t \geq 0}$ generated by $E$ and $X$ via $\mathcal{F}^{\mathcal{E}, \mathcal{X}}_t \coloneqq \sigma(E, X^t) = \sigma(E, X_s; 0 \leq s \leq t)$.

We give three examples of spaces which are $\Delta$-stable closed subsets of Skorokhod spaces of the form $D^{\mathcal{E}^\prime}_0$. These examples will be used to construct our canonical space when proving the main theorem.

\begin{example}
	Let $\mathcal{E}^\prime = \mathbb{R}^d$. The space $D^d_0$ is trivially a $\Delta$-stable closed subset of $D^d_0$ itself. Suppose that $Y$ is an $\mathbb{R}^d$-valued semimartingale. Then, the sample paths of $Y - Y_0$ belong to $D^d_0$.
\end{example}

\begin{example}
	Let $\mathcal{E}^\prime = \mathbb{R}^d$. Recall that $C^d$ is a closed subspace of $D^d$ in the Skorokhod topology. Also, the Skorokhod topology in $D^d$ restricted to $C^d$ coincides with the topology of uniform convergence on compact intervals in $C^d$. Consequently, the space $C^d_0$ is a $\Delta$-stable closed subset of $D^d_0$. Suppose that $Y$ is an $\mathbb{R}^d$-valued It\^o semimartingale whose first two characteristics (associated with some truncation function) are given by $B = \int_0^\cdot b_s \,ds$ and $C = \int_0^\cdot c_s \,ds$, where $b$ is an $\mathbb{R}^d$-valued predictable process and $c$ is an $\mathbb{S}^d_+$-valued predictable process. Then, the sample paths of $B$ belong to $C^d_0$, and the sample paths of $C$ belong to $C^{d^2}_0$.
\end{example}

\begin{example}
	Let $\mathcal{E}^\prime = \mathcal{M}_+(\mathbb{R}^d)$ be the space of finite positive Borel measures on $\mathbb{R}^d$, $\mathcal{V} = \mathcal{M}(\mathbb{R}^d)$ be the space of finite signed Borel measures on $\mathbb{R}^d$, both endowed with the topology of weak convergence. It is well-known that $\mathcal{E}^\prime$ is a Polish space, while $\mathcal{V}$ is a topological vector space that is not metrizable. For simplicity, we write $C^{\mathcal{M}_+, d}_0$ (resp.\ $D^{\mathcal{M}_+, d}_0$) rather than $C^{\mathcal{M}_+(\mathbb{R}^d)}_0$ (resp.\ $D^{\mathcal{M}_+(\mathbb{R}^d)}_0$). We denote $C^{\mathcal{M}_+, d}_{0, \mathrm{i}}$ as the subset of $C^{\mathcal{M}_+, d}_0$ consisting of nondecreasing trajectories. Here we say a measure-valued function $\mu$ is nondecreasing, if $\mu_t - \mu_s$ is a positive measure for all $0 \leq s \leq t$. It is straightforward to check that $C^{\mathcal{M}_+, d}_{0, \mathrm{i}}$ is a $\Delta$-stable closed subset of $D^{\mathcal{M}_+, d}_0$. Suppose that $Y$ is an $\mathbb{R}^d$-valued It\^o semimartingale whose third characteristic is given by $\nu(dt, d\xi) = \kappa_t(d\xi)dt$, where $\kappa$ is a predictable L\'evy transition kernel from $\Omega \times \mathbb{R}_+$ to $\mathbb{R}^d$. If we define the measure-valued process $M$ via
	\begin{equation*}
		M_t(A)
		\coloneqq \int_0^t \int_A 1 \land |\xi|^2 \,\kappa_s(d\xi) \,ds,\quad
		t \geq 0,\, A \in \mathcal{B}(\mathbb{R}^d),
	\end{equation*}
	then the sample paths of $M$ belong to $C^{\mathcal{M}_+, d}_{0, \mathrm{i}}$.
\end{example}

\subsection{Concatenated probability measure}
Throughout this subsection, we fix a canonical space $\Omega^{\mathcal{E}, \mathcal{X}} = \mathcal{E} \times \mathcal{X}$, where $\mathcal{X}$ is a $\Delta$-stable closed subset of $D^{\mathcal{E}^\prime}_0$, $\mathcal{E}$ and $\mathcal{E}^\prime$ are Polish spaces, and $\mathcal{E}^\prime$ is also a closed convex cone in a topological vector space. Recall the notation $E$, $X$, $\mathcal{F}^{\mathcal{E}, \mathcal{X}}$ and $\mathcal{F}^{\mathcal{E}, \mathcal{X}}_t$ defined in Section~\ref{sec:canonical}. The contents in this subsection are similar to \cite{MR3098443}, Section~4, including the proofs. Since our canonical space $\Omega^{\mathcal{E}, \mathcal{X}}$ is more general than theirs, we will rephrase some results in \cite{MR3098443}.

\begin{definition}[cf.\ \cite{MR3098443}, Definition~4.1]\label{def:ext_part}
	Let $0 = T_0 \leq T_1 \leq \cdots \leq T_n < \infty$ be a sequence of finite stopping times on $(\Omega^{\mathcal{E}, \mathcal{X}}, \mathcal{F}^{\mathcal{E}, \mathcal{X}}, (\mathcal{F}^{\mathcal{E}, \mathcal{X}}_t)_{t \geq 0})$. Let $(\mathcal{G}_i)_{i=0}^n$ be a collection of $\sigma$-algebras satisfying $\mathcal{G}_i \subseteq \mathcal{F}^{\mathcal{E}, \mathcal{X}}_{T_i}$, $i = 0, ..., n$. Set $T_{n+1} \coloneqq \infty$, $\mathcal{H}_0 \coloneqq \mathcal{F}^{\mathcal{E}, \mathcal{X}}_0$, and define $\mathcal{H}_i \coloneqq \mathcal{G}_{i-1} \lor \sigma(\Delta(X^{T_i}, T_{i-1}))$, $i = 1, ..., n+1$. We say $\Pi = (T_i, \mathcal{G}_i)_{i=0}^n$ is an \emph{extended partition} if
	\begin{enumerate}[label=(\roman*), topsep=0.3em, noitemsep]
		\item $T_{i+1} - T_i$ is $\mathcal{H}_{i+1}$-measurable, $i = 0, ..., n-1$,
		
		\item $\mathcal{G}_i \subseteq \mathcal{H}_i$, $i = 0, ..., n$.
	\end{enumerate}
\end{definition}

Intuitively speaking, an extended partition is a model for keeping partial information over time. At time $T_i$, our information set is $\mathcal{H}_i$. We only keep $\mathcal{G}_i$ and forget everything else. Then at time $T_{i+1}$, we gain new information through the increment of $X$ on $[T_i, T_{i+1}]$, so our information set now becomes $\mathcal{H}_{i+1}$.

Next we equip $(\Omega^{\mathcal{E}, \mathcal{X}}, \mathcal{F}^{\mathcal{E}, \mathcal{X}})$ with a probability measure $\mathbb{P}$. We construct another probability measure $\mathbb{P}^{\otimes \Pi}$, called the \emph{concatenated measure}, based on an extended partition $\Pi$.

\begin{theorem}[cf.\ \cite{MR3098443}, Theorem~4.3]\label{thm:concat_meas}
	Let $\mathbb{P}$ be a probability measure on $(\Omega^{\mathcal{E}, \mathcal{X}}, \mathcal{F}^{\mathcal{E}, \mathcal{X}})$ and let $\Pi = (T_i, \mathcal{G}_i)_{i=0}^n$ be an extended partition. Let $(\mathcal{H}_i)_{i=0}^{n+1}$ be defined as in Definition~\ref{def:ext_part}. Then, there exists a unique probability measure $\mathbb{P}^{\otimes \Pi}$ on $(\Omega^{\mathcal{E}, \mathcal{X}}, \mathcal{F}^{\mathcal{E}, \mathcal{X}})$ such that
	\begin{enumerate}[label=(\roman*), topsep=0.3em, noitemsep]
		\item $\mathbb{P}^{\otimes \Pi}(A) = \mathbb{P}(A)$, for all $A \in \mathcal{H}_i$, $i = 0, ..., n+1$,
		
		\item $\mathbb{P}^{\otimes \Pi}(B \,|\, \mathcal{F}^{\mathcal{E}, \mathcal{X}}_{T_i}) = \mathbb{P}(B \,|\, \mathcal{G}_i)$, for all $B \in \mathcal{H}_{i+1}$, $i = 0, ..., n$.
	\end{enumerate}
\end{theorem}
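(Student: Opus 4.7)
The plan is to construct $\mathbb{P}^{\otimes \Pi}$ by induction on $i$, gluing at each stage the regular conditional law of the increment $\Delta(X^{T_{i+1}}, T_i)$ given $\mathcal{G}_i$ under $\mathbb{P}$ onto the partial law already assembled up to time $T_i$. This follows the strategy of \cite{MR3098443}, Theorem~4.3, but must be adapted from $C^d_0$ to a general $\Delta$-stable closed subset $\mathcal{X}$ of $D^{\mathcal{E}^\prime}_0$. Since $\Omega^{\mathcal{E}, \mathcal{X}}$ is Polish, the required regular conditional probabilities exist; in particular, for each $i \in \{0, \ldots, n\}$ there is a transition kernel $Q_i(\omega, dy)$ on $\mathcal{X}$ that is $\mathcal{G}_i$-measurable in $\omega$ and serves as a $\mathbb{P}$-conditional law of $\Delta(X^{T_{i+1}}, T_i)$ given $\mathcal{G}_i$. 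The $\Delta$-stability of $\mathcal{X}$ guarantees that $Q_i(\omega, \cdot)$ is supported on $\mathcal{X}$.

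The iterative construction uses a path-concatenation map $\oplus : \mathcal{X} \times \mathbb{R}_+ \times \mathcal{X} \to \mathcal{X}$ defined by
\begin{equation*}
	(x \oplus_t y)(s) \coloneqq \begin{cases} x(s), & s \leq t,\\ x(t) + y(s - t), & s > t, \end{cases}
\end{equation*}
which is well-defined because $\mathcal{X}$ lies in a vector space. Membership of the glued path in $\mathcal{X}$ relies on the closedness and $\Delta$-stability of $\mathcal{X}$ together with the closed convex cone structure of $\mathcal{E}^\prime$; the cone structure is essential for instance in the measure-valued example $C^{\mathcal{M}_+, d}_{0, \mathrm{i}}$, where it preserves the monotonicity constraint across the concatenation point. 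Starting from $\mathbb{P}_0 \coloneqq \mathbb{P}|_{\mathcal{F}^{\mathcal{E}, \mathcal{X}}_{T_0}}$, I would define $\mathbb{P}_{i+1}$ on $\mathcal{F}^{\mathcal{E}, \mathcal{X}}_{T_{i+1}}$ as the pushforward of $\mathbb{P}_i \otimes Q_i$ under $(\omega, y) \mapsto (E(\omega), X(\omega) \oplus_{T_i(\omega)} y)$, and set $\mathbb{P}^{\otimes \Pi} \coloneqq \mathbb{P}_{n+1}$. Condition (i) then follows by induction from $\mathcal{G}_i \subseteq \mathcal{H}_i$ combined with the tower property: at each step the $\mathcal{H}_i$-marginal is preserved because $Q_i$ matches $\mathbb{P}$'s own conditional law. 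Condition (ii) is immediate from the design of $Q_i$, since the $\mathcal{F}^{\mathcal{E}, \mathcal{X}}_{T_i}$-conditional law of $\Delta(X^{T_{i+1}}, T_i)$ under $\mathbb{P}^{\otimes \Pi}$ is, by construction, the $\mathcal{G}_i$-measurable kernel $Q_i$.

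For uniqueness, any $\mathbb{Q}$ satisfying (i) and (ii) must agree with the iterative construction: combining (i) on $\mathcal{H}_i$ with the recursive use of (ii) determines $\mathbb{Q}$ on the $\pi$-system of cylinder sets of the form $C_0 \cap D_0 \cap \cdots \cap C_n \cap D_n$, where $C_i \in \mathcal{G}_i$ and $D_i \in \sigma(\Delta(X^{T_{i+1}}, T_i))$, via iterated conditional expectation; a Dynkin class argument extends this equality to the generated $\sigma$-algebra. The main obstacle I anticipate is establishing the measurability properties of the concatenation map $(\omega, y) \mapsto X(\omega) \oplus_{T_i(\omega)} y$ at the required level of generality, especially in the measure-valued case $\mathcal{X} = C^{\mathcal{M}_+, d}_{0, \mathrm{i}}$ where the ambient space $\mathcal{M}(\mathbb{R}^d)$ carries the non-metrizable weak topology. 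A closely related subtlety is to verify that $T_{i+1}$, when read off the glued path $X(\omega) \oplus_{T_i(\omega)} y$, equals $T_i(\omega)$ plus a measurable functional of $y$ alone, which is exactly what the $\mathcal{H}_{i+1}$-measurability of $T_{i+1} - T_i$ in Definition~\ref{def:ext_part}(i) is designed to ensure.
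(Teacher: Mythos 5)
Your outline reconstructs exactly the argument the paper relies on: its proof of this theorem is a one-paragraph deferral to Brunick--Shreve, Theorem~4.3, asserting that the argument carries over verbatim once one notes that $\Omega^{\mathcal{E},\mathcal{X}}$ is Polish (so the regular conditional probabilities exist) and that $\mathcal{X}$ is $\Delta$-stable (so increments, and hence the glued paths, stay in $\mathcal{X}$) --- the same two facts you single out. Your iterative gluing via the kernels $Q_i$ and the Dynkin-class uniqueness argument on cylinder sets is precisely the content of that cited proof, so this is essentially the same approach.
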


\begin{proof}
	The proof is verbatim the same as that of \cite{MR3098443}, Theorem~4.3 (pp.\ 1598-1602, including all the related lemmas and cited results). We only need to replace their canonical space $\Omega^{\mathcal{E}, d}$ by ours. The key facts are that $\Omega^{\mathcal{E}, \mathcal{X}} = \mathcal{E} \times \mathcal{X}$ is a Polish space and $\mathcal{X}$ is $\Delta$-stable, which guarantee that the same proof works.
\end{proof}

\begin{lemma}\label{lem:ext_part}
	Let $\mathcal{E}_0$ be a Polish space, and set $\widetilde{\mathcal{E}} \coloneqq \mathcal{E}_0 \times \mathcal{E}$. On the augmented canonical space $\Omega^{\widetilde{\mathcal{E}}, \mathcal{X}} = \mathcal{E}_0 \times \mathcal{E} \times \mathcal{X}$, denote the projections by $(U, Z_0, X)$. Let $\mathbb{P}$ be a probability measure on $(\Omega^{\widetilde{\mathcal{E}}, \mathcal{X}}, \mathcal{F}^{\widetilde{\mathcal{E}}, \mathcal{X}})$. Let $Y$ be an $\mathbb{R}^d$-valued c\`adl\`ag adapted process with $Y_0 = 0$. Suppose that $\Delta(Y^T, S)$ is $\sigma(\Delta(X^T, S))$-measurable for all stopping times $0 \leq S \leq T$. Let $\Phi: \mathcal{E} \times D^d_0 \to D^{\mathcal{E}}$ be an updating function, and set $Z \coloneqq \Phi(Z_0, Y)$. Let $0 = T_0 \leq T_1 \leq \cdots \leq T_n < \infty$ be $\sigma(U)$-measurable random variables. In particular, each $T_i$ is a stopping time. Set $\mathcal{G}_i \coloneqq \sigma(U, Z_{T_i})$, $i = 0, ..., n$. Then,
	\begin{enumerate}[label=(\roman*), topsep=0.3em, noitemsep]
		\item $\Pi = (T_i, \mathcal{G}_i)_{i=0}^n$ is an extended partition,
		
		\item for each $t \geq 0$, the law of $Z_t$ under $\mathbb{P}$ agrees with the law of $Z_t$ under $\mathbb{P}^{\otimes \Pi}$.
	\end{enumerate}
\end{lemma}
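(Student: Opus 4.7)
The plan is to establish (i) by an inductive argument leveraging the ``Markov property'' and nonanticipativity of $\Phi$ together with the measurability hypothesis on $Y$, and then to deduce (ii) by partitioning the sample space according to which random interval $[T_i, T_{i+1})$ contains a given deterministic $t$ and applying Theorem~\ref{thm:concat_meas}(i) piece by piece. The crucial algebraic identity, which makes stopping compatible with the difference operator, is $\nabla(\Delta(y, s), u) = \Delta(y^{s+u}, s)$; combined with the nonanticipativity axiom of $\Phi$, it allows replacing an unbounded future increment of $Y$ by a stopped one without altering the value of $\Phi(\cdot, Y)$ at the time of interest.

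For (i), condition (i) of Definition~\ref{def:ext_part} is immediate because each $T_j$ is $\sigma(U)$-measurable and $\sigma(U) \subseteq \mathcal{G}_i \subseteq \mathcal{H}_{i+1}$. For condition (ii), I prove by induction on $i$ that $Z_{T_i}$ is $\mathcal{H}_i$-measurable, which together with $U \in \mathcal{H}_i$ gives $\mathcal{G}_i = \sigma(U, Z_{T_i}) \subseteq \mathcal{H}_i$. The base case is trivial since $Z_{T_0} = Z_0$ and $\mathcal{H}_0 = \mathcal{F}^{\widetilde{\mathcal{E}}, \mathcal{X}}_0$. For the inductive step, apply the Markov property of $\Phi$ at $T_{i-1}$ and evaluate at $T_i - T_{i-1}$ to write
\[
Z_{T_i} = \Phi(Z_{T_{i-1}}, \Delta(Y, T_{i-1}))(T_i - T_{i-1}),
\]
then use nonanticipativity to replace $\Delta(Y, T_{i-1})$ with $\Delta(Y^{T_i}, T_{i-1})$ without changing the value at $T_i - T_{i-1}$. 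The hypothesis on $Y$ makes $\Delta(Y^{T_i}, T_{i-1})$ a measurable function of $\Delta(X^{T_i}, T_{i-1})$, while $Z_{T_{i-1}} \in \mathcal{G}_{i-1}$ and $T_i - T_{i-1} \in \sigma(U) \subseteq \mathcal{G}_{i-1}$; since $\Phi$ is continuous hence Borel, $Z_{T_i} \in \mathcal{G}_{i-1} \vee \sigma(\Delta(X^{T_i}, T_{i-1})) = \mathcal{H}_i$.

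For (ii), fix $t \geq 0$ and let $A_i = \{T_i \leq t < T_{i+1}\}$ for $i = 0, \dots, n$, with $T_{n+1} = \infty$; these sets lie in $\sigma(U) \subseteq \mathcal{H}_{i+1}$ and partition $\Omega^{\widetilde{\mathcal{E}}, \mathcal{X}}$. On $A_i$, since $t \leq T_{i+1}$, the same Markov-property-plus-nonanticipativity argument yields
\[
Z_t = \Phi(Z_{T_i}, \Delta(Y^{T_{i+1}}, T_i))(t - T_i),
\]
so $Z_t \bm{1}_{A_i}$ is $\mathcal{H}_{i+1}$-measurable. Hence, for any bounded Borel $f : \mathcal{E} \to \mathbb{R}$, Theorem~\ref{thm:concat_meas}(i) applied summand-wise gives
\[
\mathbb{E}^{\mathbb{P}^{\otimes \Pi}}[f(Z_t)]
= \sum_{i=0}^n \mathbb{E}^{\mathbb{P}^{\otimes \Pi}}[f(Z_t) \bm{1}_{A_i}]
= \sum_{i=0}^n \mathbb{E}^{\mathbb{P}}[f(Z_t) \bm{1}_{A_i}]
= \mathbb{E}^{\mathbb{P}}[f(Z_t)].
\]
The main obstacle is not conceptually deep but demands careful bookkeeping: one must invoke nonanticipativity at just the right time scale so that the only information about $Y$ feeding into $Z_{T_i}$ (and into $Z_t$ on $A_i$) enters through the stopped increment $\Delta(X^{T_i}, T_{i-1})$ or $\Delta(X^{T_{i+1}}, T_i)$, precisely what $\mathcal{H}_i$ respectively $\mathcal{H}_{i+1}$ remembers, enabling the reduction via Theorem~\ref{thm:concat_meas}(i).
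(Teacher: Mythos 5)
Your proof is correct and is essentially the argument the paper invokes by reference to Steps 2 and 5 of Brunick--Shreve, Theorem~7.1: the Markov property plus nonanticipativity of $\Phi$ (via the identity $\nabla(\Delta(y,s),u)=\Delta(y^{s+u},s)$) to get $Z_{T_i}\in\mathcal{H}_i$, and the partition $\{T_i\leq t<T_{i+1}\}$ combined with Theorem~\ref{thm:concat_meas}(i) for the marginals. The only cosmetic point is that you use continuity of $\Phi$ where only Borel measurability (implicit in the setup) is needed.
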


\begin{proof}
	The proof of (i) (resp.\ (ii)) is exactly the same as in Step 2 (resp.\ Step 5) of the proof of \cite{MR3098443}, Theorem~7.1. In their proof, they have an explicit formula for each $T_i$ in terms of $U$, but the key point is the $\sigma(U)$-measurability of each $T_i$.
\end{proof}

We present several properties preserved by the concatenated measures. These results are analogous to those in \cite{MR3098443}, Section 4.2. The proofs follow almost verbatim from \cite{MR3098443}, so we omit them here. Again, the only difference is the choice of the canonical spaces. The continuity of sample paths assumed in \cite{MR3098443} is not essential for obtaining these results. What is crucial instead is the Polishness of $\Omega^{\mathcal{E}, \mathcal{X}}$ and the $\Delta$-stability of $\mathcal{X}$, which together ensure Theorem~\ref{thm:concat_meas}, and the subsequent properties follow from there.

\begin{proposition}[cf.\ \cite{MR3098443}, Proposition~4.10]\label{prop:sample_path}
	Let $\mathbb{P}$ be a probability measure on $(\Omega^{\mathcal{E}, \mathcal{X}}, \mathcal{F}^{\mathcal{E}, \mathcal{X}})$ and let $\Pi = (T_i, \mathcal{G}_i)_{i=0}^n$ be an extended partition. Let $A$ be an $\mathbb{R}^d$-valued continuous adapted process. Suppose that $\Delta(A, T_i)$ is $(\mathcal{G}_i \lor \sigma(\Delta(X, T_i)))$-measurable for all $i = 0, ..., n$. Then,
	\begin{enumerate}[label=(\roman*), topsep=0.3em, noitemsep]
		\item $A$ is $\mathbb{P}$-a.s.\ absolutely continuous if and only if $A$ is $\mathbb{P}^{\otimes \Pi}$-a.s.\ absolutely continuous,
		
		\item if $d$ is a square number and $A$ is square matrix-valued, $A_t - A_s$ is symmetric positive semi-definite for all $0 \leq s \leq t$ $\mathbb{P}$-a.s.\ if and only if $A_t - A_s$ is symmetric positive semi-definite for all $0 \leq s \leq t$ $\mathbb{P}^{\otimes \Pi}$-a.s.
	\end{enumerate}
\end{proposition}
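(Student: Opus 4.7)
The plan is to reduce both statements to the fact, from Theorem~\ref{thm:concat_meas}(i), that $\mathbb{P}$ and $\mathbb{P}^{\otimes \Pi}$ agree on each $\mathcal{H}_i$, by writing the events of interest as countable intersections of ``piecewise'' events, each lying in some $\mathcal{H}_{i+1}$. Since absolute continuity is local on compact intervals, ``$A$ is a.c.\ on $\mathbb{R}_+$'' is equivalent to the conjunction of the events $E_{i, N} \coloneqq \{\Delta(A, T_i) \text{ is a.c.\ on } [0, (T_{i+1} - T_i) \wedge N]\}$ for $i = 0, \ldots, n$ (with $T_{n+1} = \infty$) and $N \in \mathbb{N}$. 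For part~(ii), a telescoping argument, writing a cross-piece increment $A_t - A_s$ with $s \in [T_j, T_{j+1}]$ and $t \in [T_k, T_{k+1}]$ as a sum of within-piece increments, each of which is PSD under the piecewise hypothesis, shows that $A_t - A_s \in \mathbb{S}_+^m$ for all $s \le t$ is equivalent to $\Delta(A, T_i)_u - \Delta(A, T_i)_v \in \mathbb{S}_+^m$ for all $0 \le v \le u \le T_{i+1} - T_i$, for each $i$.

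The crucial step is to argue that each such piecewise event lies in $\mathcal{H}_{i+1}$. For $i = n$, since $T_{n+1} = \infty$ and hence $X^{T_{n+1}} = X$, the event depends on the full increment $\Delta(A, T_n)$, which by hypothesis is $(\mathcal{G}_n \lor \sigma(\Delta(X, T_n))) = \mathcal{H}_{n+1}$-measurable. For $i < n$, the piecewise event only involves the stopped increment $\nabla(\Delta(A, T_i), T_{i+1} - T_i)$. Combining the factorization hypothesis on $\Delta(A, T_i)$, the identity $\nabla(\Delta(X, T_i), T_{i+1} - T_i) = \Delta(X^{T_{i+1}}, T_i)$, and adaptedness/continuity of $A$ (so that $A^{T_{i+1}}$ is $\mathcal{F}^{\mathcal{E}, \mathcal{X}}_{T_{i+1}}$-measurable), a nonanticipativity argument identifies the stopped increment as $\mathcal{H}_{i+1}$-measurable; concretely, one needs to pass from the ``full'' factorization $\sigma(\Delta(A, T_i)) \subseteq \mathcal{G}_i \lor \sigma(\Delta(X, T_i))$ to the ``stopped'' factorization $\sigma(\nabla(\Delta(A, T_i), T_{i+1} - T_i)) \subseteq \mathcal{G}_i \lor \sigma(\Delta(X^{T_{i+1}}, T_i)) = \mathcal{H}_{i+1}$.

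With the piecewise events placed in the appropriate $\mathcal{H}_{i+1}$, the conclusion follows from a null-set argument: $A$ is $\mathbb{P}$-a.s.\ absolutely continuous iff $\mathbb{P}(E_{i, N}^c) = 0$ for every $(i, N)$, iff $\mathbb{P}^{\otimes \Pi}(E_{i, N}^c) = 0$ for every $(i, N)$ by Theorem~\ref{thm:concat_meas}(i) applied to events in $\mathcal{H}_{i+1}$, iff $A$ is $\mathbb{P}^{\otimes \Pi}$-a.s.\ absolutely continuous; a union bound over the countable index set handles the intersection. The argument for the PSD-increment property is identical given the telescoping reduction. The main obstacle is the nonanticipativity argument in the middle step, where the abstract factorization hypothesis on the full increment $\Delta(A, T_i)$ must be upgraded to a factorization of its stopped version at time $T_{i+1} - T_i$ through the stopped path $\Delta(X^{T_{i+1}}, T_i)$; this is a delicate point that mirrors the continuous-case analysis in Brunick and Shreve.
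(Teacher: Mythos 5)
Your proposal is correct and follows essentially the same route as the paper, which simply defers to the proof of Proposition~4.10 in Brunick and Shreve: decompose the almost-sure path property into per-interval events determined by the stopped increments $\Delta(A^{T_{i+1}}, T_i)$ (using exactly your telescoping reduction for the positive semi-definite case), place each such event in $\mathcal{H}_{i+1}$, and transfer the resulting null sets one at a time via Theorem~\ref{thm:concat_meas}(i) together with a countable union bound. The ``stopped factorization'' step you flag as delicate is precisely the auxiliary measurability lemma from Section~4 of Brunick and Shreve that the omitted proof relies on, so your outline matches the paper's argument at the paper's own level of detail.
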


\begin{proposition}[cf.\ \cite{MR3098443}, Proposition~4.11]\label{prop:exp_id}
	Let $\mathbb{P}$ be a probability measure on $(\Omega^{\mathcal{E}, \mathcal{X}}, \mathcal{F}^{\mathcal{E}, \mathcal{X}})$ and let $\Pi = (T_i, \mathcal{G}_i)_{i=0}^n$ be an extended partition. Let $A$ be an $\mathbb{R}^d$-valued continuous adapted process with $A_0 = 0$. Suppose that $\Delta(A, T_i)$ is $(\mathcal{G}_i \lor \sigma(\Delta(X, T_i)))$-measurable for all $i = 0, ..., n$. Moreover, suppose that $\alpha$ is an $\mathbb{R}^d$-valued progressively measurable process such that
	\begin{equation*}
		\begin{split}
			&\mathbb{P}\biggl(\int_0^t |\alpha_s| \,ds < \infty,\, A_t = \int_0^t \alpha_s \,ds,\, \forall\, t \geq 0\biggr)\\
			&\quad\quad= \mathbb{P}^{\otimes \Pi}\biggl(\int_0^t |\alpha_s| \,ds < \infty,\, A_t = \int_0^t \alpha_s \,ds,\, \forall\, t \geq 0\biggr)
			= 1.
		\end{split}
	\end{equation*}
	Then, for every nonnegative measurable function $f$ on $\mathbb{R}^d$, and every stopping time $T$ satisfying $(T - T_i)^+$ is $(\mathcal{G}_i \lor \sigma(\Delta(X, T_i)))$-measurable for all $i = 0, ..., n$, we have
	\begin{equation*}
		\mathbb{E}\biggl[\int_0^T f(\alpha_s) \,ds\biggr]
		= \mathbb{E}^{\otimes \Pi}\biggl[\int_0^T f(\alpha_s) \,ds\biggr].
	\end{equation*}
\end{proposition}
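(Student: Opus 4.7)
The plan is to decompose the integral across the extended partition, express each piece as a measurable functional of quantities lying in $\mathcal{H}_{i+1}$, and then invoke Theorem~\ref{thm:concat_meas}(i), which asserts that $\mathbb{P}$ and $\mathbb{P}^{\otimes\Pi}$ coincide on $\mathcal{H}_{i+1}$ for every $i$. Setting $T_{n+1} := \infty$ as in Definition~\ref{def:ext_part}, write
\[
\int_0^T f(\alpha_s)\,ds \;=\; \sum_{i=0}^{n} F_i, \qquad F_i \;:=\; \int_{T\wedge T_i}^{T\wedge T_{i+1}} f(\alpha_s)\,ds,
\]
so it suffices to show $\mathbb{E}[F_i] = \mathbb{E}^{\otimes\Pi}[F_i]$ for each $i$. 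By a monotone class argument it is enough to handle $f = \bm{1}_B$ with $B\in\mathcal{B}(\mathbb{R}^d)$ and then extend to nonnegative measurable $f$ by monotone convergence.

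Since $A_t = \int_0^t \alpha_s\,ds$ almost surely under both measures, $\alpha$ is the Lebesgue-a.e.\ derivative of $A$. I would construct a Borel functional $G_f\colon C_0^d \times \mathbb{R}_+ \to [0,\infty]$ depending only on $a|_{[0,t]}$ and satisfying $G_f(a,t) = \int_0^t f(a'(s))\,ds$ whenever $a$ is absolutely continuous with a.e.\ derivative $a'$; one concrete choice extracts $a'$ from $a$ via dyadic difference quotients such as $\limsup_{k\to\infty} 2^k\bigl(a(\lceil 2^k s\rceil 2^{-k}) - a(\lfloor 2^k s\rfloor 2^{-k})\bigr)$, which is Borel in $a$ and agrees with the true derivative Lebesgue-a.e.\ on absolutely continuous paths. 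Then on a full-measure event under both $\mathbb{P}$ and $\mathbb{P}^{\otimes\Pi}$,
\[
F_i \;=\; G_f\!\bigl(\nabla(\Delta(A,T_i),\,T_{i+1}-T_i),\ (T-T_i)^+\wedge(T_{i+1}-T_i)\bigr).
\]

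The crucial step is then to verify that this representation is $\mathcal{H}_{i+1}$-measurable. The hypothesis yields measurability of $\Delta(A,T_i)$ with respect to $\mathcal{G}_i \lor \sigma(\Delta(X,T_i))$, and since $A$ is continuous and adapted while $T_{i+1}$ is a stopping time, this can be upgraded to measurability of the stopped increment $\nabla(\Delta(A,T_i),T_{i+1}-T_i) = \Delta(A^{T_{i+1}},T_i)$ with respect to $\mathcal{G}_i \lor \sigma(\Delta(X^{T_{i+1}},T_i)) = \mathcal{H}_{i+1}$. An analogous argument using the hypothesis that $(T - T_i)^+$ is $(\mathcal{G}_i\lor\sigma(\Delta(X,T_i)))$-measurable shows that $(T-T_i)^+\wedge(T_{i+1}-T_i)$ is $\mathcal{H}_{i+1}$-measurable. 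Consequently $F_i$ is $\mathcal{H}_{i+1}$-measurable modulo null sets, and Theorem~\ref{thm:concat_meas}(i) gives $\mathbb{E}[F_i] = \mathbb{E}^{\otimes\Pi}[F_i]$; summing over $i=0,\dots,n$ finishes the proof.

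The main obstacle is precisely this measurability upgrade: the hypothesis records only the raw Borel dependence of $\Delta(A,T_i)$ on the \emph{full} post-$T_i$ future of $X$, whereas $\mathcal{H}_{i+1}$ sees $X$ only up to $T_{i+1}$. Bridging the gap amounts to selecting a representing Borel function that is nonanticipative in its path argument, which is possible because $A$ is continuous and adapted; this is the same kind of measurable selection argument underlying Proposition~\ref{prop:sample_path} and the parallel statements in Brunick and Shreve~\cite{MR3098443}, Section~4.2.
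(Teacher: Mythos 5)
Your proposal is correct and follows essentially the same route as the paper's proof, which is the argument of Brunick and Shreve's Proposition~4.11: decompose the integral over the blocks $[T\wedge T_i, T\wedge T_{i+1}]$, recover $\alpha$ from $A$ via a Borel difference-quotient functional so that each block is an $\mathcal{H}_{i+1}$-measurable functional of $\nabla(\Delta(A,T_i),T_{i+1}-T_i)$ and $(T-T_i)^+\wedge(T_{i+1}-T_i)$, and then apply Theorem~\ref{thm:concat_meas}(i). The measurability upgrade you flag as the main obstacle is indeed handled by the Galmarino-type nonanticipativity lemmas of \cite{MR3098443}, Section~4.2, which is exactly what the paper invokes when it states that the proof carries over verbatim to the new canonical space.
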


\begin{corollary}[cf.\ \cite{MR3098443}, Corollary~4.13]\label{cor:tight}
	Let $\mathbb{P}$ be a probability measure on $(\Omega^{\mathcal{E}, \mathcal{X}}, \mathcal{F}^{\mathcal{E}, \mathcal{X}})$, and for each $m \in \mathbb{N}^*$, let $\Pi^m = (T^m_i, \mathcal{G}^m_i)_{i=0}^{N(m)}$ be an extended partition. Let $A$ be an $\mathbb{R}^d$-valued continuous adapted process with $A_0 = 0$. Suppose that $T^m_i$ and $\Delta(A, T^m_i)$ are $(\mathcal{G}^m_i \lor \sigma(\Delta(X, T^m_i)))$-measurable for all $i = 0, ..., N(m)$ and $m \in \mathbb{N}^*$. Moreover, suppose that $\alpha$ is an $\mathbb{R}^d$-valued progressively measurable process such that
	\begin{equation*}
		\begin{split}
			&\mathbb{P}\biggl(\int_0^t |\alpha_s| \,ds < \infty,\, A_t = \int_0^t \alpha_s \,ds,\, \forall\, t \geq 0\biggr)\\
			&\quad\quad\quad\quad= \mathbb{P}^{\otimes \Pi^m}\biggl(\int_0^t |\alpha_s| \,ds < \infty,\, A_t = \int_0^t \alpha_s \,ds,\, \forall\, t \geq 0\biggr)
			= 1,\quad \forall\, m \in \mathbb{N}^*.
		\end{split}
	\end{equation*}
	Finally, suppose that
	\begin{equation*}
		\mathbb{E}\biggl[\int_0^t |\alpha_s| \,ds\biggr] < \infty,\quad
		\forall\, t > 0.
	\end{equation*}
	Then, the collection of probability measures $(\mathbb{P}^{\otimes \Pi^m} \circ A^{-1})_{m \in \mathbb{N}^*}$ on $C^d_0$ is tight.
\end{corollary}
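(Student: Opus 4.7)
The plan is to establish tightness of $(\mathbb{P}^{\otimes \Pi^m} \circ A^{-1})_{m \in \mathbb{N}^*}$ on $C^d_0$ via the classical modulus-of-continuity criterion. Since $A_0 = 0$ under every $\mathbb{P}^{\otimes \Pi^m}$, the marginals at time $0$ are trivially tight, so it suffices to show that for every $T, \eta > 0$,
\begin{equation*}
\lim_{\delta \to 0} \sup_{m \in \mathbb{N}^*} \mathbb{P}^{\otimes \Pi^m}\bigl(w_T(A, \delta) > \eta\bigr) = 0,
\end{equation*}
where $w_T(x, \delta) \coloneqq \sup\{|x(t) - x(s)|: 0 \leq s \leq t \leq T,\, t - s \leq \delta\}$ denotes the usual modulus of continuity on $[0, T]$.

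Because the identity $A_t = \int_0^t \alpha_u \, du$ holds $\mathbb{P}^{\otimes \Pi^m}$-a.s.\ for every $m$, I obtain the pathwise bound, valid for any threshold $K > 0$,
\begin{equation*}
w_T(A, \delta) \leq \sup_{t - s \leq \delta,\, 0 \leq s \leq t \leq T} \int_s^t |\alpha_u| \, du \leq K \delta + \int_0^T |\alpha_u| \bm{1}_{\{|\alpha_u| > K\}} \, du.
\end{equation*}
Choosing $\delta \leq \eta/(2K)$ and applying Markov's inequality, the tightness problem reduces to controlling $\mathbb{E}^{\otimes \Pi^m}[\int_0^T |\alpha_u| \bm{1}_{\{|\alpha_u| > K\}} \, du]$ uniformly in $m$.

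This uniform control is precisely what Proposition~\ref{prop:exp_id} delivers, applied with the deterministic stopping time $T$ and the nonnegative function $f(x) = |x| \bm{1}_{\{|x| > K\}}$. The measurability requirement that $(T - T^m_i)^+$ be $(\mathcal{G}^m_i \lor \sigma(\Delta(X, T^m_i)))$-measurable is immediate, since $T$ is a constant and $T^m_i$ is assumed measurable with respect to that $\sigma$-algebra. Consequently,
\begin{equation*}
\mathbb{E}^{\otimes \Pi^m}\biggl[\int_0^T |\alpha_u| \bm{1}_{\{|\alpha_u| > K\}} \, du\biggr] = \mathbb{E}\biggl[\int_0^T |\alpha_u| \bm{1}_{\{|\alpha_u| > K\}} \, du\biggr],
\end{equation*}
a quantity independent of $m$ that tends to $0$ as $K \to \infty$ by dominated convergence, using the integrability assumption on $\alpha$. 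Given $\epsilon, \eta > 0$, I would choose $K$ so large that the right-hand side is at most $\epsilon \eta / 2$ and then set $\delta = \eta/(2K)$; this yields $\sup_m \mathbb{P}^{\otimes \Pi^m}(w_T(A, \delta) > \eta) \leq \epsilon$, completing the tightness proof. The argument is essentially mechanical once Proposition~\ref{prop:exp_id} is in hand, and I anticipate no serious obstacle beyond the bookkeeping needed to verify its hypotheses for deterministic $T$.
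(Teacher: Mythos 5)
Your argument is correct and is essentially the proof that the paper omits by reference to \cite{MR3098443}, Corollary~4.13: the modulus-of-continuity bound $w_T(A,\delta)\le K\delta+\int_0^T|\alpha_u|\bm{1}_{\{|\alpha_u|>K\}}\,du$ on the full-measure event where $A=\int_0^\cdot\alpha_s\,ds$, combined with Proposition~\ref{prop:exp_id} (applied to the deterministic time $T$ and $f(x)=|x|\bm{1}_{\{|x|>K\}}$, whose hypotheses you correctly verify) to make the truncated expectation independent of $m$, and dominated convergence to send it to $0$ as $K\to\infty$. No gaps.
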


\begin{lemma}[cf.\ \cite{MR3098443}, Theorem~4.15, Lemma~4.16]\label{lem:mtg_1}
	Let $\mathbb{P}$ be a probability measure on $(\Omega^{\mathcal{E}, \mathcal{X}}, \mathcal{F}^{\mathcal{E}, \mathcal{X}})$ and let $\Pi = (T_i, \mathcal{G}_i)_{i=0}^n$ be an extended partition. Let $M$ be a real-valued local martingale under $\mathbb{P}$ with bounded jumps, and $M_0 = 0$. Suppose that $\Delta(M, T_i)$ is $(\mathcal{G}_i \lor \sigma(\Delta(X, T_i)))$-measurable for all $i = 0, ..., n$. Then, $M$ is a local martingale under $\mathbb{P}^{\otimes \Pi}$.
\end{lemma}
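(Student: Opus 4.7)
The plan follows Brunick--Shreve (\cite{MR3098443}, Theorem~4.15 and Lemma~4.16), adapted to our more general canonical space $\Omega^{\mathcal{E},\mathcal{X}}$ and extended to accommodate jumps of bounded size. First, since $M$ has bounded jumps, say $|\Delta M| \leq K$, the stopping times $\tau_k \coloneqq \inf\{t \geq 0 : |M_t| \geq k\}$ give bounded $\mathbb{P}$-martingales $M^{\tau_k}$ with $|M^{\tau_k}| \leq k + K$; the c\`adl\`ag regularity of $M$ forces $\tau_k \to \infty$ on every sample path, so this sequence localizes $M$ under any probability measure on the canonical space, and it suffices to show that each $M^{\tau_k}$ is a $\mathbb{P}^{\otimes \Pi}$-martingale. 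I would then establish inductively that $M_{T_i}$ is $\mathcal{H}_i$-measurable and that the truncated increments $M^{\tau_k}_{t \wedge T_{i+1}} - M^{\tau_k}_{t \wedge T_i}$ are $\mathcal{H}_{i+1}$-measurable for every fixed $t \geq 0$. The key input is the hypothesis that $\Delta(M, T_i)$ is $(\mathcal{G}_i \vee \sigma(\Delta(X, T_i)))$-measurable, combined with the adaptedness of $M$: together these force the path of $M$ on $[T_i, T_{i+1}]$ to depend only on $\mathcal{G}_i$ and the stopped shifted path $\Delta(X^{T_{i+1}}, T_i)$, which generates precisely $\mathcal{H}_{i+1}$.

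With these measurabilities in hand, I would telescope the martingale property over the intervals $[T_i, T_{i+1}]$. Theorem~\ref{thm:concat_meas}(ii) yields
\begin{equation*}
    \mathbb{E}^{\otimes \Pi}\bigl[ M^{\tau_k}_{t \wedge T_{i+1}} - M^{\tau_k}_{t \wedge T_i} \,\big|\, \mathcal{F}^{\mathcal{E}, \mathcal{X}}_{T_i} \bigr]
    = \mathbb{E}\bigl[ M^{\tau_k}_{t \wedge T_{i+1}} - M^{\tau_k}_{t \wedge T_i} \,\big|\, \mathcal{G}_i \bigr],
\end{equation*}
and the right-hand side vanishes by the $\mathbb{P}$-martingale property of the bounded process $M^{\tau_k}$ (with optional stopping at the bounded stopping times $t \wedge T_i$ and $t \wedge T_{i+1}$) combined with the tower property, since $\mathcal{G}_i \subseteq \mathcal{F}^{\mathcal{E}, \mathcal{X}}_{T_i}$. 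To upgrade from vanishing increments to the full martingale identity $\mathbb{E}^{\otimes \Pi}[M^{\tau_k}_t \, \bm{1}_F] = \mathbb{E}^{\otimes \Pi}[M^{\tau_k}_s \, \bm{1}_F]$ for $F \in \mathcal{F}^{\mathcal{E}, \mathcal{X}}_s$ and $s \leq t$, a monotone class argument reduces to $F$ in a generator compatible with the partition, after which $\bm{1}_F$ is inserted in the telescoping identity and Theorem~\ref{thm:concat_meas}(i) controls the contribution on the earliest $\mathcal{H}_j$.

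The principal obstacle is the measurability bookkeeping in the first phase: showing that stopping $M$ at the path-dependent time $\tau_k$ preserves the structural hypothesis, and that evaluating the shifted path at an $\mathcal{H}_{i+1}$-measurable time lands back in $\mathcal{H}_{i+1}$ rather than a strictly larger $\sigma$-algebra. This requires careful use of adaptedness of $M$ together with the inductive identification of $M_{T_i}$ as $\mathcal{H}_i$-measurable, so that expressions such as $(\tau_k - T_i)^+$ can be written in terms of $M_{T_i}$ and $\Delta(M, T_i)$. This is the same subtlety encountered in \cite{MR3098443}, and the argument there transfers mutatis mutandis once adjusted for the more general canonical space $\Omega^{\mathcal{E}, \mathcal{X}}$ used here.
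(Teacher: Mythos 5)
Your proposal is correct and follows essentially the same route as the paper, which itself omits the proof and defers verbatim to Brunick--Shreve (Theorem~4.15 and Lemma~4.16 of \cite{MR3098443}): localize via $\tau_k = \inf\{t : |M_t| \geq k\}$, using the bounded-jump hypothesis so that $M^{\tau_k}$ is bounded by $k$ plus the jump bound, then verify $\mathcal{H}_{i+1}$-measurability of the stopped increments and telescope with Theorem~\ref{thm:concat_meas}. The measurability bookkeeping you flag is exactly the content of the cited argument, and your identification of the bounded-jump assumption as the ingredient replacing continuity is the right adaptation.
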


\begin{lemma}[cf.\ \cite{MR3098443}, Theorem~4.15, Lemma~4.18]\label{lem:mtg_2}
	Let $\mathbb{P}$ be a probability measure on $(\Omega^{\mathcal{E}, \mathcal{X}}, \mathcal{F}^{\mathcal{E}, \mathcal{X}})$ and let $\Pi = (T_i, \mathcal{G}_i)_{i=0}^n$ be an extended partition. Let $M^1$, $M^2$ be real-valued local martingales under $\mathbb{P}$ with bounded jumps, and $M^1_0 = M^2_0 = 0$. Let $C$ be a real-valued continuous adapted process, with $C_0 = 0$, such that $M^3 \coloneqq M^1 M^2 - C$ is a local martingale under $\mathbb{P}$. Suppose that $\Delta(M^1, T_i)$, $\Delta(M^2, T_i)$, $\Delta(C, T_i)$ are $(\mathcal{G}_i \lor \sigma(\Delta(X, T_i)))$-measurable for all $i = 0, ..., n$. Then, $M^3$ is a local martingale under $\mathbb{P}^{\otimes \Pi}$.
\end{lemma}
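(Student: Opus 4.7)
The plan is to reduce Lemma~\ref{lem:mtg_2} to Lemma~\ref{lem:mtg_1} by decomposing $M^3$ via integration by parts. First I would apply Lemma~\ref{lem:mtg_1} directly to $M^1$ and $M^2$: each is a $\mathbb{P}$-local martingale with bounded jumps, starting at $0$, with $\Delta(M^k, T_i)$ being $(\mathcal{G}_i \lor \sigma(\Delta(X, T_i)))$-measurable by hypothesis, so both $M^1$ and $M^2$ are local martingales under $\mathbb{P}^{\otimes \Pi}$.

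The key auxiliary process is $N \coloneqq [M^1, M^2] - C$, where $[M^1, M^2]$ denotes the quadratic covariation. From the semimartingale integration-by-parts identity $M^1 M^2 = \int_0^\cdot M^1_{s-} \, dM^2_s + \int_0^\cdot M^2_{s-} \, dM^1_s + [M^1, M^2]$ (valid since $M^1_0 = M^2_0 = 0$), one sees $N = M^3 - \int_0^\cdot M^1_{s-} \, dM^2_s - \int_0^\cdot M^2_{s-} \, dM^1_s$, a difference of $\mathbb{P}$-local martingales and hence itself a $\mathbb{P}$-local martingale. Because $C$ is continuous, the jumps of $N$ satisfy $\Delta N_t = \Delta M^1_t \, \Delta M^2_t$, bounded by the product of the jump bounds of $M^1$ and $M^2$. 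To apply Lemma~\ref{lem:mtg_1} to $N$ I must verify that $\Delta(N, T_i)$ is $(\mathcal{G}_i \lor \sigma(\Delta(X, T_i)))$-measurable. Writing $\Delta(N, T_i) = \Delta([M^1, M^2], T_i) - \Delta(C, T_i)$, the second term is measurable by hypothesis; for the first, I would use a pathwise construction of quadratic covariation (via the polarization identity $4[M^1, M^2] = [M^1 + M^2] - [M^1 - M^2]$ and the fact that $[M]$ is the ucp-limit of sums of squared increments along, e.g., dyadic partitions) to obtain the shift identity $\Delta([M^1, M^2], T_i)(s) = [\Delta(M^1, T_i), \Delta(M^2, T_i)]_s$, which exhibits $\Delta([M^1, M^2], T_i)$ as a measurable functional of $\Delta(M^1, T_i)$ and $\Delta(M^2, T_i)$. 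Lemma~\ref{lem:mtg_1} then gives that $N$ is a $\mathbb{P}^{\otimes \Pi}$-local martingale.

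To conclude, under $\mathbb{P}^{\otimes \Pi}$ the processes $M^1_-, M^2_-$ are locally bounded c\`agl\`ad and $M^1, M^2$ are local martingales, so both stochastic integrals $\int_0^\cdot M^1_{s-} \, dM^2_s$ and $\int_0^\cdot M^2_{s-} \, dM^1_s$ are $\mathbb{P}^{\otimes \Pi}$-local martingales. The integration-by-parts identity, being a pathwise/semimartingale identity valid under both measures, then produces $M^3 = \int_0^\cdot M^1_{s-} \, dM^2_s + \int_0^\cdot M^2_{s-} \, dM^1_s + N$ as a $\mathbb{P}^{\otimes \Pi}$-local martingale.

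The main obstacle will be making the measurability check for $\Delta([M^1, M^2], T_i)$ fully rigorous, since the quadratic covariation is defined only up to null sets and these null sets a priori depend on the measure. I would address this by fixing one pathwise Borel version of $[M^1, M^2]$ — for instance, the ucp-limit along a prescribed sequence of deterministic partitions on the Borel set where such a limit exists, and set to $0$ off it — so that the shift identity holds simultaneously $\mathbb{P}$- and $\mathbb{P}^{\otimes \Pi}$-a.s., and analogously ensuring that the stochastic integrals appearing in integration by parts can be taken as the same Borel processes under both measures.
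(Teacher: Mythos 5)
Your route is genuinely different from the paper's. The paper gives no stochastic-calculus argument at all: it defers verbatim to Brunick--Shreve (Theorem~4.15, Lemma~4.18), whose proof verifies the (localized) martingale property of $M^1M^2-C$ directly, interval by interval, from the two defining properties of $\mathbb{P}^{\otimes\Pi}$ in Theorem~\ref{thm:concat_meas}, using the algebraic identity $M^1_tM^2_t-M^1_sM^2_s=(M^1_t-M^1_s)(M^2_t-M^2_s)+M^1_s(M^2_t-M^2_s)+M^2_s(M^1_t-M^1_s)$ together with the exact measurability hypotheses. That argument never needs a bracket or a stochastic integral. Your decomposition via integration by parts, Lemma~\ref{lem:mtg_1} applied to $N=[M^1,M^2]-C$, and stability of local martingales under integration of locally bounded predictable integrands is conceptually clean, handles the jumps naturally, and each individual local-martingale assertion is correct.

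The gap is precisely in the step you flag, and the fix you propose does not close it. Since $\mathbb{P}$ and $\mathbb{P}^{\otimes\Pi}$ are in general mutually singular, any object defined only up to null sets of one measure --- the bracket, the stochastic integrals --- has no canonical meaning under the other. Taking the ucp limit of bracket sums along a fixed deterministic sequence of partitions does not produce what you need: ucp convergence is convergence in probability, so an a.s.\ convergent subsequence must be extracted measure by measure, and the resulting Borel process is guaranteed to coincide with the true bracket only $\mathbb{P}$-a.s., not $\mathbb{P}^{\otimes\Pi}$-a.s. What your argument requires is a single adapted Borel functional of the paths that is indistinguishable from $[M^1,M^2]$ simultaneously under every measure making $M^1,M^2$ semimartingales; this exists (Karandikar's pathwise construction, which uses path-dependent crossing times rather than deterministic partitions), and with it the shift identity $\Delta([M^1,M^2],T_i)=[\Delta(M^1,T_i),\Delta(M^2,T_i)]$ holds exactly and yields the required measurability of $\Delta(N,T_i)$. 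You then also need not identify the two stochastic integrals across measures: perform the integration by parts separately under $\mathbb{P}^{\otimes\Pi}$, note that its bracket term agrees $\mathbb{P}^{\otimes\Pi}$-a.s.\ with the pathwise version, and add $N$. So your approach is salvageable, but only after replacing the deterministic-partition construction; the paper's elementary route is designed exactly to sidestep this version problem.
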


\subsection{Approximation results}
The following result is about the weak convergence of the integrals of processes, which is analogous to that in \cite{MR3098443}, Section 6.1. Denote $\overline{\mathbb{N}}^* \coloneqq \mathbb{N}^* \cup \{\infty\}$.
\begin{proposition}[cf.\ \cite{MR3098443}, Proposition 6.1]\label{prop:wconv_int}
	Let $(Z^m)_{m \in \overline{\mathbb{N}}^*}$ be a collection of c\`adl\`ag $\mathcal{E}$-valued processes, possibly defined on different probability spaces with probability measures $(\mathbb{Q}^m)_{m \in \overline{\mathbb{N}}^*}$. Let $f: \mathbb{R}_+ \times \mathcal{E} \to \mathbb{R}^d$ be a measurable function. Suppose that
	\begin{enumerate}[label=(\roman*), topsep=0.3em, noitemsep]
		\item for each $t \geq 0$, the law of $Z^m_t$ under $\mathbb{Q}^m$ is independent of $m$ for $m \in \mathbb{N}^*$,
		
		\item the law of $Z^m$ on $D^{\mathcal{E}}$ under $\mathbb{Q}^m$ converges weakly to the law of $Z^\infty$ on $D^{\mathcal{E}}$ under $\mathbb{Q}^\infty$, as $m \to \infty$,
		
		\item $\mathbb{E}^{\mathbb{Q}^1}[\int_0^t |f(s, Z^1_s)| \,ds] < \infty$, $\forall\, t > 0$.
	\end{enumerate}
	Then, for each $m \in \overline{\mathbb{N}}^*$, the process $F^m_t \coloneqq \int_0^t f(s, Z^m_s) \,ds$ is well-defined and absolutely continuous $\mathbb{Q}^m$-a.s. Moreover, the following hold:
	\begin{enumerate}[topsep=0.3em, noitemsep]
		\item[(iv)] the collection $(f(\cdot, Z^m_\cdot), \mathrm{Leb}([0, t]) \otimes \mathbb{Q}^m)_{m \in \overline{\mathbb{N}}^*}$ is uniformly integrable, for every $t > 0$,
		
		\item[(v)] the law of $(Z^m, F^m)$ on $D^{\mathcal{E}} \times C^d_0$ under $\mathbb{Q}^m$ converges weakly to the law of $(Z^\infty, F^\infty)$ on $D^{\mathcal{E}} \times C^d_0$ under $\mathbb{Q}^\infty$, as $m \to \infty$.
	\end{enumerate}
\end{proposition}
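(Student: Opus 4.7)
The plan is to reduce everything to a pushforward-measure identity. Conditions (i) and (ii) together imply that, for all $s \geq 0$ outside a countable set, the law of $Z_s^\infty$ under $\mathbb{Q}^\infty$ coincides with the law of $Z_s^1$ under $\mathbb{Q}^1$: indeed, Skorokhod weak convergence yields $Z_s^{m} \Rightarrow Z_s^\infty$ whenever $\mathbb{Q}^\infty(\Delta Z_s^\infty \neq 0) = 0$, which holds for all $s$ off a countable set, and by (i) the left-hand side has a law independent of $m$. Consequently, the pushforward $\mu_t$ of $\mathrm{Leb}([0,t]) \otimes \mathbb{Q}^m$ on $[0,t] \times \mathcal{E}$ under $(s, \omega) \mapsto (s, Z_s^m(\omega))$ is a single finite Borel measure, the same for every $m \in \overline{\mathbb{N}}^*$. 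Condition (iii) becomes $\int |f| \, d\mu_t < \infty$, so by Fubini each $F_t^m$ is well-defined and $t \mapsto F_t^m$ is $\mathbb{Q}^m$-a.s.\ absolutely continuous. This also yields (iv): for every $K > 0$,
\[
	\mathbb{E}^{\mathbb{Q}^m}\biggl[\int_0^t |f(s, Z_s^m)| \bm{1}_{\{|f(s, Z_s^m)| > K\}} \, ds\biggr]
	= \int_{[0,t] \times \mathcal{E}} |f| \bm{1}_{\{|f| > K\}} \, d\mu_t,
\]
which is independent of $m$ and tends to $0$ as $K \to \infty$ by dominated convergence.

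Next I would establish joint tightness of $(Z^m, F^m)$ in $D^{\mathcal{E}} \times C^d_0$. Tightness of $(Z^m)$ follows from (ii). For $(F^m)$, fix $T > 0$ and $\eta > 0$, and pick $K$ so that the tail above is bounded by $\eta$ uniformly in $m$. Since $|F_t^m - F_s^m| \leq K(t - s) + \int_0^T |f(u, Z_u^m)| \bm{1}_{\{|f(u, Z_u^m)| > K\}} \, du$, Markov's inequality gives uniform control of the modulus of continuity, and the $L^1$ bound $\mathbb{E}^{\mathbb{Q}^m}[|F_T^m|] \leq \int |f| \, d\mu_T$ handles tightness at fixed times. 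The stochastic Arzel\`a--Ascoli criterion then yields tightness in $C^d_0$.

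The main obstacle is identifying any subsequential weak limit. Given $(Z^{m_k}, F^{m_k}) \Rightarrow (W, G)$, I must show $G_t = \int_0^t f(s, W_s) \, ds$ even though $f$ is only Borel measurable. I would invoke Skorokhod's representation theorem to pass to a common probability space on which $(Z^{m_k}, F^{m_k}) \to (W, G)$ almost surely, noting that the identity $F_t^{m_k} = \int_0^t f(s, Z_s^{m_k}) \, ds$ transfers to this space and that $W$ has the law of $Z^\infty$. Given $\eta > 0$, since $\mu_T$ is a finite Borel measure on the Polish space $[0, T] \times \mathcal{E}$ and $|f| \in L^1(\mu_T)$, density of bounded continuous functions in $L^1(\mu_T)$ produces a bounded continuous $f_\eta$ with $\int |f - f_\eta| \, d\mu_T < \eta$. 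Since $W$ is c\`adl\`ag, $Z_s^{m_k} \to W_s$ holds for Lebesgue-a.e.~$s$ almost surely, and bounded convergence gives $\int_0^t f_\eta(s, Z_s^{m_k}) \, ds \to \int_0^t f_\eta(s, W_s) \, ds$ a.s. Combined with $\sup_{m \in \overline{\mathbb{N}}^*} \mathbb{E}^{\mathbb{Q}^m}[|\int_0^t (f - f_\eta)(s, Z_s^m) \, ds|] \leq \eta$ and the triangle inequality, this forces $G_t = \int_0^t f(s, W_s) \, ds$ a.s.~for each $t$; continuity of both sides upgrades this to equality as processes. Hence $(W, G)$ has the same law as $(Z^\infty, F^\infty)$, proving (v).
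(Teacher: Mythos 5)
Your proof is correct and follows essentially the same route as the paper's (which defers to Brunick--Shreve, Proposition~6.1, with exactly the two c\`adl\`ag-specific modifications you also make: agreement of the one-dimensional marginals off a countable set via (i)--(ii), and Skorokhod convergence giving pointwise convergence at Lebesgue-a.e.\ time, fed into dominated convergence). The only mechanical difference is that you close the argument through tightness, Skorokhod representation and subsequence identification, whereas the cited proof applies the standard approximation lemma for weak convergence directly to the bounded-continuous approximations of $f$; both hinge on the same two ingredients, namely the $m$-independence of the image measure $\mu_t$ and the $L^1(\mu_T)$-density of bounded continuous functions.
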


\begin{proof}
	The proof is almost the same as in the proof of \cite{MR3098443}, Proposition 6.1, with $C^{\mathcal{E}}$ replaced by $D^{\mathcal{E}}$. Only two places need slight changes. First, since $Z^\infty$ is c\`adl\`ag, (ii) implies that $Z^m_t$ converges to $Z^\infty_t$ in law for all but countably many $t \geq 0$. Together with (i) and the right-continuity of the sample paths of $Z^\infty$, we still obtain that $Z^\infty_t$ and $Z^1_t$ have the same law for all $t \geq 0$. Secondly, in the proof of (v), one needs to check that the map $D^{\mathcal{E}} \ni z \mapsto \int_0^{\cdot \land k} f^k(s, z(s)) \,ds \in C^d_0$ is continuous, where $f^k: [0, k] \times \mathcal{E} \to \mathbb{R}^d$ is a bounded continuous function. Since convergence in the Skorokhod space implies pointwise convergence almost everywhere, the dominated convergence theorem then finishes the proof.
\end{proof}

The following technical lemma characterizes under the concatenated probability measures, how ``close'' the integral of a process is to the integral of its certain type of conditional expectation. This result will be used in the proof of Theorem~\ref{thm:mp} to find the characteristics of some limiting process.

\begin{lemma}\label{lem:approx}
	Set $\widetilde{\mathcal{E}} \coloneqq [0, 1] \times \mathcal{E}$. On the augmented canonical space $\Omega^{\widetilde{\mathcal{E}}, \mathcal{X}} = [0, 1] \times \mathcal{E} \times \mathcal{X}$, denote the projections by $(U, Z_0, X)$. Let $\mathbb{P}$ be a probability measure on $(\Omega^{\widetilde{\mathcal{E}}, \mathcal{X}}, \mathcal{F}^{\widetilde{\mathcal{E}}, \mathcal{X}})$ under which $U \sim \mathrm{Unif}([0, 1])$ is independent of $(Z_0, X)$. Let $Y$ be an $\mathbb{R}^d$-valued c\`adl\`ag adapted process with $Y_0 = 0$, and $A$ be an $\mathbb{R}^n$-valued continuous adapted process with $A_0 = 0$. Suppose that $\Delta(Y^T, S)$, $\Delta(A^T, S)$ are $\sigma(\Delta(X^T, S))$-measurable for all stopping times $0 \leq S \leq T$. Let $\Phi: \mathcal{E} \times D^d_0 \to D^{\mathcal{E}}$ be an updating function, and set $Z \coloneqq \Phi(Z_0, Y)$. For $m \in \mathbb{N}^*$, set $N(m) \coloneqq m^2$. Define the stopping times $T^m_0 \coloneqq 0$, $T^m_i \coloneqq (U + i-1) / m$, $i = 1, ..., N(m)$. Set $\mathcal{G}^m_i \coloneqq \sigma(U, Z_{T^m_i})$, $i = 0, ..., N(m)$, and $\Pi^m \coloneqq (T^m_i, \mathcal{G}^m_i)_{i=0}^{N(m)}$. Let $\alpha$ be an $\mathbb{R}^n$-valued progressively measurable process. Suppose that $\mathbb{E}[\int_0^t |\alpha_s| \,ds] < \infty$,
	$\forall\, t > 0$, and
	\begin{equation*}
		\begin{split}
			&\mathbb{P}\biggl(\int_0^t |\alpha_s| \,ds < \infty,\, A_t = \int_0^t \alpha_s \,ds,\, \forall\, t \geq 0\biggr)\\
			&\quad\quad\quad\quad= \mathbb{P}^{\otimes \Pi^m}\biggl(\int_0^t |\alpha_s| \,ds < \infty,\, A_t = \int_0^t \alpha_s \,ds,\, \forall\, t \geq 0\biggr)
			= 1,\quad \forall\, m \in \mathbb{N}^*.
		\end{split}
	\end{equation*}
	Let $\widehat{a}: \mathbb{R}_+ \times \mathcal{E} \to \mathbb{R}^n$ be a measurable function such that $\widehat{a}(t, Z_t) = \mathbb{E}[\alpha_t \,|\, Z_t]$ for Lebesgue-a.e.\ $t \geq 0$. Set $\overline{A} \coloneqq \int_0^\cdot \widehat{a}(s, Z_s) \,ds$. Then, for any $\varepsilon > 0$ and $t > 0$,
	\begin{equation*}
		\lim_{m \to \infty} \mathbb{P}^{\otimes \Pi^m}\biggl(\max_{s \leq t} |A_s - \overline{A}_s| \geq \varepsilon\biggr) = 0.
	\end{equation*}
\end{lemma}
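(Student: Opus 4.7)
The plan is to discretize $A - \overline{A}$ at the grid times $\{T^m_i\}$ and, under $\mathbb{P}^{\otimes \Pi^m}$, decompose via the conditional-expectation identity of Theorem~\ref{thm:concat_meas}(ii) into a martingale part and an approximation part, then show both vanish. A preliminary step is to reduce to the bounded case $|\alpha_s| \leq K$: the truncation error in $A$ and $\overline{A}$ has the same $L^1$-norm under $\mathbb{P}^{\otimes \Pi^m}$ as under $\mathbb{P}$ by Proposition~\ref{prop:exp_id}, and it vanishes as $K \to \infty$ by the integrability assumption (using $|\widehat{a}(s, Z_s)| \leq \mathbb{E}[|\alpha_s| \mid Z_s]$).

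Introduce the auxiliary process
\[
\overline{A}^{(m)}_t \coloneqq \int_0^t \mathbb{E}[\alpha_s \mid U, Z_{T^m_{I(s)}}]\,ds, \qquad I(s) \coloneqq \max\{i : T^m_i \leq s\}.
\]
Since on $[T^m_i, T^m_{i+1}]$ the integrand is $\mathcal{G}^m_i$-measurable, Theorem~\ref{thm:concat_meas}(ii) gives
\[
\mathbb{E}^{\otimes \Pi^m}\bigl[A_{T^m_{i+1}} - A_{T^m_i} \bigm| \mathcal{F}^{\widetilde{\mathcal{E}}, \mathcal{X}}_{T^m_i}\bigr]
= \mathbb{E}\bigl[A_{T^m_{i+1}} - A_{T^m_i} \bigm| U, Z_{T^m_i}\bigr]
= \overline{A}^{(m)}_{T^m_{i+1}} - \overline{A}^{(m)}_{T^m_i},
\]
so $(A_{T^m_k} - \overline{A}^{(m)}_{T^m_k})_k$ is a $\mathbb{P}^{\otimes \Pi^m}$-martingale with bounded increments $|\delta_i| \leq 2K/m$. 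Orthogonality of the increments and Doob's inequality, combined with $\sum_i \mathbb{E}^{\otimes \Pi^m}[|\delta_i|] \leq 2\,\mathbb{E}[\int_0^t |\alpha_s|\,ds]$ via Proposition~\ref{prop:exp_id}, yield an $O(1/m)$ bound on $\mathbb{E}^{\otimes \Pi^m}[\max_{k : T^m_k \leq t}(A_{T^m_k} - \overline{A}^{(m)}_{T^m_k})^2]$, and the crude bound $|A_s - A_{T^m_{I(s)}}| + |\overline{A}^{(m)}_s - \overline{A}^{(m)}_{T^m_{I(s)}}| \leq 2K/m$ extends this to uniform-in-$s$ convergence.

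It remains to show $\max_{s \leq t}|\overline{A}^{(m)}_s - \overline{A}_s| \to 0$ in $\mathbb{P}^{\otimes \Pi^m}$-probability. Since $U$ is independent of $(Z_0, X)$ under $\mathbb{P}$,
\[
\overline{A}^{(m)}_t - \overline{A}_t
= \int_0^t \bigl\{\mathbb{E}[\alpha_s \mid Z_{T^m_{I(s)}}] - \mathbb{E}[\alpha_s \mid Z_s]\bigr\}\,ds.
\]
The integrand is bounded by $2K$ and, on the event $\{I(s) = i\}$, is $\mathcal{H}_{i+1}$-measurable, so its $\mathbb{P}^{\otimes \Pi^m}$-expectation equals its $\mathbb{P}$-expectation by Theorem~\ref{thm:concat_meas}(i). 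As $u \in [0, 1]$ varies, the ``lag'' $\tau_m(s, u) \coloneqq s - T^m_{I(s)}(u)$ traverses $[0, 1/m)$ uniformly, so Fubini converts averaging over $U$ into averaging over $\tau$:
\[
\mathbb{E}\bigl[\bigl|\mathbb{E}[\alpha_s \mid Z_{T^m_{I(s)}}] - \mathbb{E}[\alpha_s \mid Z_s]\bigr|\bigr]
= m\int_0^{1/m} \mathbb{E}\bigl[\bigl|\mathbb{E}[\alpha_s \mid Z_{s-\tau}] - \mathbb{E}[\alpha_s \mid Z_s]\bigr|\bigr]\,d\tau.
\]
Since $Z$ is c\`adl\`ag, $Z_{s-} = Z_s$ $\mathbb{P}$-a.s.\ for Lebesgue-a.e.\ $s$, and the hypothesis gives $\mathbb{E}[\alpha_s \mid Z_s] = \widehat{a}(s, Z_s)$; a Lebesgue-point argument should show the integrand tends to $0$ as $\tau \to 0^+$ for a.e.\ $s$, and dominated convergence applied to the outer $\int_0^t\,ds$ finishes the proof.

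The main obstacle is exactly this last step: convergence of the conditional expectations $\mathbb{E}[\alpha_s \mid Z_{s-\tau}] \to \mathbb{E}[\alpha_s \mid Z_s]$ in $L^1$ as $\tau \to 0^+$ is not automatic from the pointwise convergence $Z_{s-\tau} \to Z_s$, because $\sigma(Z_{s-\tau})$ need not monotonically approach $\sigma(Z_s)$; this is precisely the issue that does not arise in the continuous setting of \cite{MR3098443}. The role of the random shift $U \sim \mathrm{Unif}([0, 1])$ is crucial, because it smooths $\tau$ over $[0, 1/m)$, converting the pointwise question at $\tau = 0$ into an averaged, Lebesgue-differentiation-type statement to which the identity $\mathbb{E}[\alpha_s \mid Z_s] = \widehat{a}(s, Z_s)$ can be applied.
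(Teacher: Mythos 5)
Your overall architecture---truncation, a discrete martingale built from the identity $\mathbb{P}^{\otimes \Pi^m}(\,\cdot\,|\,\mathcal{F}^{\widetilde{\mathcal{E}},\mathcal{X}}_{T^m_i}) = \mathbb{P}(\,\cdot\,|\,\mathcal{G}^m_i)$ of Theorem~\ref{thm:concat_meas}(ii), and Fubini over the random offset $U$---is sensible and broadly in the spirit of the argument the paper actually invokes (Step~7 of the proof of \cite{MR3098443}, Theorem~7.1, to which the paper's own proof defers after the preliminary reductions). But the proposal is not a proof: the decisive step is exactly the one you flag as ``the main obstacle,'' and it is left open. The pointwise claim that $\mathbb{E}\bigl[\bigl|\mathbb{E}[\alpha_s \,|\, Z_{s-\tau}] - \mathbb{E}[\alpha_s \,|\, Z_s]\bigr|\bigr] \to 0$ as $\tau \downarrow 0$ for a.e.\ $s$ is false in general: conditional expectations are not continuous in the conditioning variable, and $\sigma(Z_{s-\tau})$ may carry entirely different information from $\sigma(Z_s)$ for every $\tau > 0$, even at continuity points of $Z$. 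What your argument actually requires is the $\tau$-averaged statement
\begin{equation*}
	\lim_{m \to \infty} \int_0^t m \int_0^{1/m} \mathbb{E}\bigl[\bigl|\mathbb{E}[\alpha_s \,|\, Z_{s-\tau}] - \mathbb{E}[\alpha_s \,|\, Z_s]\bigr|\bigr] \, d\tau \, ds = 0,
\end{equation*}
a Lebesgue-density-type assertion for the family of $\sigma$-algebras $(\sigma(Z_r))_{r \geq 0}$. This is essentially the whole content of the lemma, and no argument is supplied; ``a Lebesgue-point argument should show'' does not close it. The argument of \cite{MR3098443} is organized precisely so as to avoid comparing conditional expectations across the moving $\sigma$-algebras $\sigma(Z_{s-\tau})$ and $\sigma(Z_s)$; until you either prove the averaged convergence above or restructure the final step along those lines, the gap remains.

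A secondary issue: your truncation step and the martingale step both implicitly require that $\int_{T^m_i}^{T^m_{i+1}} \alpha_s \bm{1}_{\{|\alpha_s| \leq K\}} \, ds$ be $\mathcal{H}^m_{i+1}$-measurable in order to invoke Theorem~\ref{thm:concat_meas}(ii). The hypotheses only give this measurability for the increments of $A$ itself, not for $\alpha$ or its truncations; $\alpha$ is merely progressively measurable and need not be a functional of $X$. This is repairable---replace $\alpha$ by the $\liminf$ of difference quotients of $A$, as the paper does when defining $b^*$, $c^*$, $g^*_k$ in Step~1 of the proof of Theorem~\ref{thm:mp}---but it must be done before truncating.
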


\begin{proof}
	By Lemma~\ref{lem:ext_part}, $\Pi^m$ is indeed an extended partition, and the law of $Z_t$ under $\mathbb{P}$ agrees with the law of $Z_t$ under $\mathbb{P}^{\otimes \Pi^m}$ for each $t \geq 0$ and $m \in \mathbb{N}^*$. By the definition of $\widehat{a}$ and Jensen's inequality, we have $\mathbb{E}[\int_0^t |\widehat{a}(s, Z_s)| \,ds] < \infty$,
	$\forall\, t > 0$, thus $\overline{A}$ is well-defined under $\mathbb{P}$ and all $\mathbb{P}^{\otimes \Pi^m}$. This implies that the collection $(\widehat{a}(\cdot, Z_\cdot), \mathrm{Leb}([0, t]) \otimes \mathbb{P}^{\otimes \Pi^m})_{m \in \mathbb{N}^*}$ is uniformly integrable, for every $t > 0$. The rest of the proof follows exactly the same as in Step 7 of the proof of \cite{MR3098443}, Theorem~7.1.
\end{proof}

\subsection{Other lemmas and notation}
The first lemma is measure theoretic. Recall that if $f: \mathbb{R}_+ \to \mathbb{R}$ is a right-continuous finite variation function with $f(0) = 0$, then it induces a measure $\lambda$ on $\mathbb{R}_+$ that satisfies $\lambda([0, t]) = f(t)$ for all $t \geq 0$. Now suppose that $\mu$ is a function on $\mathbb{R}_+$ taking values of Borel measures on $\mathbb{R}^d$ with $\mu_0 = 0$. We are interested in finding a measure $\nu$ on $\mathbb{R}_+ \times \mathbb{R}^d$ such that $\nu([0, t] \times A) = \mu_t(A)$ for all $t \geq 0$ and $A \in \mathcal{B}(\mathbb{R}^d)$. The following lemma provides a sufficient condition, which serves our purpose to prove the main theorem.

\begin{lemma}\label{lem:cstr_nu}
	Let $\mu \in C^{\mathcal{M}_+, d}_{0, \mathrm{i}}$. Then, there exists a $\sigma$-finite positive Borel measure $\nu$ on $\mathbb{R}_+ \times \mathbb{R}^d$ such that
	\begin{equation}\label{eq:nu_mu}
		\nu([0, t] \times A) = \mu_t(A),\quad
		\forall\, t \geq 0,\, A \in \mathcal{B}(\mathbb{R}^d).
	\end{equation}
\end{lemma}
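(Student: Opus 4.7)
The plan is to construct $\nu$ via the Riesz--Markov--Kakutani representation theorem on the locally compact Polish space $\mathbb{R}_+ \times \mathbb{R}^d$, after first upgrading weak continuity of $\mu$ to continuity in total variation. For the TV continuity, note that for $s \leq t$ the increment $\mu_t - \mu_s$ is a positive measure with total mass $F(t) - F(s)$, where $F(t) := \mu_t(\mathbb{R}^d)$; weak continuity applied to the constant function $1$ forces $F$ to be continuous, hence $\mu_t \to \mu_s$ in total variation as $s \to t$. In particular $t \mapsto \mu_t(B)$ is continuous and nondecreasing from $0$ for every Borel $B \subseteq \mathbb{R}^d$.

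Next, I would define a positive linear functional $L$ on $C_c(\mathbb{R}_+ \times \mathbb{R}^d)$ by the Riemann--Stieltjes prescription
\[
	L(f) := \lim_{n \to \infty} \sum_{i=0}^{n-1} \int_{\mathbb{R}^d} f(t_i^n, x) \, (\mu_{t_{i+1}^n} - \mu_{t_i^n})(dx)
\]
for $f$ supported in $[0, T] \times \mathbb{R}^d$, along any sequence of partitions $0 = t_0^n < \cdots < t_n^n = T$ with mesh tending to zero. Existence and partition-independence of the limit follow from uniform continuity of $f$: passing to a common refinement of any two partitions yields the Cauchy bound $\omega_f(\operatorname{mesh}) \cdot F(T) \to 0$, where $\omega_f$ is the modulus of continuity of $f$. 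Linearity is immediate, positivity holds summand-by-summand (each increment is a positive measure), and $|L(f)| \leq \|f\|_\infty F(T)$. The Riesz--Markov--Kakutani theorem then produces a unique Radon measure $\nu$ on $\mathbb{R}_+ \times \mathbb{R}^d$ with $L(f) = \int f \, d\nu$, and the local finiteness $\nu([0, T] \times \mathbb{R}^d) \leq F(T) < \infty$ gives $\sigma$-finiteness.

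The remaining and only delicate step is to verify \eqref{eq:nu_mu}. For product functions $f = g \otimes h$ with $g \in C_c(\mathbb{R}_+)$ and $h \in C_c(\mathbb{R}^d)$, the Riemann--Stieltjes sum collapses to $\sum_i g(t_i^n) [G_h(t_{i+1}^n) - G_h(t_i^n)]$ with $G_h(t) := \mu_t(h)$ continuous and of bounded variation on compacts, so $\int (g \otimes h) \, d\nu = L(g \otimes h) = \int g \, dG_h$ as a Lebesgue--Stieltjes integral. Approximating $\mathbf{1}_{[0, t]}$ by continuous $g_\varepsilon$ (using continuity of $G_h$ at $t$) and $\mathbf{1}_U$ for open bounded $U$ by nonnegative continuous $h_\varepsilon$ (using inner regularity of $\mu_t$), two successive passages to the limit yield $\nu([0, t] \times U) = \mu_t(U)$, which extends to all $B \in \mathcal{B}(\mathbb{R}^d)$ by the monotone class theorem. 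The hard part will be organising this final approximation cleanly and exchanging limits in the time and space variables, but the total variation continuity established at the start uniformly dominates all the approximation errors, so each interchange reduces to a monotone or dominated convergence argument.
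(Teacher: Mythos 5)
Your proposal is correct, but it takes a genuinely different route from the paper. The paper defines $\nu$ directly as a finitely additive set function on the algebra generated by half-open rectangles, via $\nu([s,t)\times A) \coloneqq \mu_t(A)-\mu_s(A)$, so that \eqref{eq:nu_mu} is essentially built into the definition; the entire effort then goes into proving $\sigma$-additivity on that algebra by hand, using inner regularity (compact $K\subseteq A$), outer regularity (open $U_i\supseteq A_i$), and a finite-subcover argument, before invoking Carath\'eodory's extension theorem. You instead construct a positive linear functional on $C_c(\mathbb{R}_+\times\mathbb{R}^d)$ by Riemann--Stieltjes sums and invoke Riesz--Markov--Kakutani, which absorbs exactly the regularity-plus-compactness work that the paper does explicitly; the price you pay is that \eqref{eq:nu_mu} must be recovered a posteriori. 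Both arguments rest on the same two structural facts: increments $\mu_t-\mu_s$ are positive measures (giving positivity of the functional, resp.\ of the set function, and the total-mass bound $F(T)=\mu_T(\mathbb{R}^d)$), and $F$ is continuous (giving the uniform estimate $0\le\mu_t(A)-\mu_s(A)\le F(t)-F(s)$, which is the paper's \eqref{eq:mu_cts} and your total-variation continuity). Your final verification step, which you rightly flag as the delicate one, can be organized without any problematic interchange of limits: for fixed $0\le h\in C_c(\mathbb{R}^d)$, the identity $L(g\otimes h)=\int g\,dG_h$ for all $g\in C_c(\mathbb{R}_+)$ says that the Radon measure $E\mapsto\int_{E\times\mathbb{R}^d}h\,d\nu$ on $\mathbb{R}_+$ coincides with the Lebesgue--Stieltjes measure of the continuous nondecreasing function $G_h$, whence $\int_{[0,t]\times\mathbb{R}^d}h\,d\nu=\mu_t(h)$; letting $h\uparrow\bm{1}_U$ by monotone convergence and then applying Dynkin's $\pi$-$\lambda$ theorem over open sets finishes the argument. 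One cosmetic caveat: $[0,T]\times\mathbb{R}^d$ is not compact, so the bound $\nu([0,T]\times\mathbb{R}^d)\le F(T)$ needed for $\sigma$-finiteness should be obtained by exhausting with compacts $[0,T]\times K$ (or read off from the verified identity \eqref{eq:nu_mu} itself).
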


\begin{proof}
	First we notice the following fact. Since $t \mapsto \mu_t$ is continuous in the sense of weak convergence, we know that $t \mapsto \mu_t(\mathbb{R}^d)$ is a continuous function. For $0 \leq s < t$, by assumption $\mu_t - \mu_s$ is a positive measure, so we have
	\begin{equation}\label{eq:mu_cts}
		0 \leq \mu_t(A) - \mu_s(A)
		\leq \mu_t(\mathbb{R}^d) - \mu_s(\mathbb{R}^d),\quad
		A \in \mathcal{B}(\mathbb{R}^d).
	\end{equation}
	This implies that $t \mapsto \mu_t(A)$ is continuous for all $A \in \mathcal{B}(\mathbb{R}^d)$.
	
	For $0 \leq s \leq t \leq \infty$ and $A \in \mathcal{B}(\mathbb{R}^d)$, we define a set function
	\begin{equation}\label{eq:nu_mu_2}
		\nu([s, t) \times A)
		\coloneqq \mu_t(A) - \mu_s(A),
	\end{equation}
	where we use the convention $\mu_\infty(A) \coloneqq \lim_{t \to \infty} \mu_t(A)$, which is well-defined by monotonicity (but can be infinite). We also define a collection of subsets of $\mathbb{R}_+ \times \mathbb{R}^d$ via
	\begin{equation*}
		\mathcal{A}
		\coloneqq \Biggl\{\bigcup_{i = 1}^n ([s_i, t_i) \times A_i): 0 \leq s_i \leq t_i \leq \infty,\, A_i \in \mathcal{B}(\mathbb{R}^d),\, 1 \leq i \leq n,\, n \in \mathbb{N}\Biggr\},
	\end{equation*}
	which is an algebra that generates $\mathcal{B}(\mathbb{R}_+ \times \mathbb{R}^d)$. For each $E \in \mathcal{A}$, we can write it in the form $E = \bigcup_{i = 1}^n ([s_i, t_i) \times A_i)$, where the sets $[s_i, t_i) \times A_i$ are disjoint. Then, we define
	\begin{equation*}
		\nu(E) \coloneqq \sum_{i=1}^n \nu([s_i, t_i) \times A_i).
	\end{equation*}
	It is straightforward to verify that $\nu(E)$ is well-defined, i.e.\ it does not depend on how $E$ is partitioned. So far we have defined a set function $\nu$ on $\mathcal{A}$, which satisfies $\nu(\varnothing) = 0$ and is finitely additive. If we manage to show $\nu$ is $\sigma$-additive on $\mathcal{A}$, then by Carath\'eodory's extension theorem, we could uniquely extend $\nu$ to a measure on $\mathcal{B}(\mathbb{R}_+ \times \mathbb{R}^d)$, and \eqref{eq:nu_mu} follows from \eqref{eq:nu_mu_2} and the continuity of $\mu$. This would finish the proof.
	
	It only remains to prove $\nu$ is $\sigma$-additive on $\mathcal{A}$. To prove this, it suffices to show the following statement: \emph{if $[s, t) \times A = \bigcup_{i=1}^\infty ([s_i, t_i) \times A_i)$, where $0 \leq s < t \leq \infty, 0 \leq s_i < t_i \leq \infty, A, A_i \in \mathcal{B}(\mathbb{R}^d)$, and the sets $[s_i, t_i) \times A_i$ are disjoint, then}
	\begin{equation*}
		\nu([s, t) \times A)
		= \sum_{i=1}^\infty \nu([s_i, t_i) \times A_i).
	\end{equation*}
	One direction of inequality is simple. For each $n \in \mathbb{N}^*$, we have $[s, t) \times A \supseteq \bigcup_{i=1}^n ([s_i, t_i) \times A_i)$. By the finiteness, it is easy to check that $\nu([s, t) \times A) \geq \sum_{i=1}^n \nu([s_i, t_i) \times A_i)$. Sending $n \to \infty$ proves the ``$\geq$'' direction. Conversely, let us first assume that $s > 0$ and $t < \infty$. Pick any $\varepsilon > 0$. Using \eqref{eq:nu_mu_2} and the continuity of $t \mapsto \mu_t(A)$, one can find $t^\prime \in (s, t)$ such that $\nu([s, t^\prime) \times A) > \nu([s, t) \times A) - \varepsilon/4$. Next, since $\mu_{t^\prime} - \mu_s$ is a finite positive Borel measure on $\mathbb{R}^d$, by the regularity one can find a compact set $K \subseteq A$ such that $\nu([s, t^\prime) \times K) > \nu([s, t^\prime) \times A) - \varepsilon/4$. Combining these two steps gives us
	\begin{equation}\label{eq:reg}
		\nu([s, t^\prime) \times K)
		> \nu([s, t) \times A) - \frac{\varepsilon}{2}.
	\end{equation}
	Similarly, for each $i \in \mathbb{N}^*$, one can find $s_i^\prime \in (0, s_i)$ and an open set $U_i \supseteq A_i$ such that
	\begin{equation}\label{eq:reg_2}
		\nu([s_i^\prime, t_i) \times U_i)
		< \nu([s_i, t_i) \times A_i) + \frac{\varepsilon}{2^{i+1}}.
	\end{equation}
	Note that $[s, t^\prime] \times K$ is a compact set, and we have $[s, t^\prime] \times K \subseteq \bigcup_{i=1}^\infty ((s_i^\prime, t_i) \times U_i)$. Thus, we can extract a finite subcover $[s, t^\prime] \times K \subseteq \bigcup_{i=1}^n ((s_i^\prime, t_i) \times U_i)$, which leads to $[s, t^\prime) \times K \subseteq \bigcup_{i=1}^n ([s_i^\prime, t_i) \times U_i)$. By the finiteness and \eqref{eq:reg}, \eqref{eq:reg_2}, we obtain the estimate
	\begin{equation*}
		\begin{split}
			\nu([s, t) \times A)
			&< \nu([s, t^\prime) \times K) + \frac{\varepsilon}{2}
			\leq \sum_{i=1}^n \nu([s_i^\prime, t_i) \times U_i) + \frac{\varepsilon}{2}
			\leq \sum_{i=1}^\infty \nu([s_i^\prime, t_i) \times U_i) + \frac{\varepsilon}{2}\\
			&\leq \sum_{i=1}^\infty \biggl(\nu([s_i, t_i) \times A_i) + \frac{\varepsilon}{2^{i+1}}\biggr) + \frac{\varepsilon}{2}
			= \sum_{i=1}^\infty \nu([s_i, t_i) \times A_i) + \varepsilon.
		\end{split}
	\end{equation*}
	Sending $\varepsilon \to 0$ finishes the proof for $s > 0$ and $t < \infty$. Finally, when $s = 0$, for those $i$ with $s_i = 0$, the interval $[0, t_i)$ is relatively open in $\mathbb{R}_+$, so we may take $s_i^\prime = s_i = 0$, and the interval $[0, t_i)$ serves our purpose. When $t = \infty$, we simply partition $[s, \infty)$ into countably many subintervals $[s, t^j)$ with all $t^j < \infty$. We then apply what we just proved to each $[s, t^j) \times A$ and sum up the results.
\end{proof}

\begin{remark}
	The condition $\mu \in C^{\mathcal{M}_+, d}_{0, \mathrm{i}}$ is far from optimal but sufficient for our usage. Analogous to real-valued functions, it is reasonable to expect that the conclusion of Lemma~\ref{lem:cstr_nu} remains valid for functions $\mu$ that are of ``finite variation'' in a suitable sense. The condition $\mu \in C^{\mathcal{M}_+, d}_{0, \mathrm{i}}$ says that $\mu$ is continuous and nondecreasing, thus $\mu$ is expected to be in this ``finite variation'' class.
\end{remark}

The next lemma reflects the fact that, in the locally bounded case, the local martingale property on a general filtered probability space is preserved when passing to the canonical space with the natural filtration.

\begin{lemma}\label{lem:loc_mtg}
	Let $\mathcal{E}$ be a Polish space. Let $(\Omega, \mathcal{F}, (\mathcal{F}_t)_{t \geq 0}, \mathbb{P})$ be a filtered probability space that supports an $\mathcal{E}$-valued c\`adl\`ag adapted process $X$. Let $\Omega^*$ be a closed subset of $D^{\mathcal{E}}$ with $\mathbb{P}(X \in \Omega^*) = 1$. Let $X^*$ be the canonical process on $\Omega^*$, $\mathcal{F}^* = \sigma(X^*)$, and $(\mathcal{F}^*_t)_{t \geq 0}$ be the natural filtration of $X^*$. Let $\mathbb{P}^*$ be the law of $X$ on $\Omega^*$ under $\mathbb{P}$. Let $\Psi: D^{\mathcal{E}} \to D^d_0$ be a measurable map satisfying
	\begin{enumerate}[label=(\roman*), topsep=0.3em, noitemsep]
		\item nonanticipativity:
		\begin{equation*}
			\Psi(x)^t = \Psi(x^t)^t,\quad
			\forall\, t \geq 0,\, x \in D^{\mathcal{E}},
		\end{equation*}
		
		\item bounded jumps: there exists $M > 0$ such that
		\begin{equation*}
			|\Psi(x)(t) - \Psi(x)(t-)| \leq M,\quad
			\forall\, t \geq 0,\, x \in D^{\mathcal{E}}.
		\end{equation*}
	\end{enumerate}
	Let $F: \mathbb{R}^d \to \mathbb{R}^{d^\prime}$ be a continuous function. Suppose that the process $t \mapsto F(\Psi(X)_t)$ is a local martingale on $(\Omega, \mathcal{F}, (\mathcal{F}_t)_{t \geq 0}, \mathbb{P})$. Then, the process $t \mapsto F(\Psi(X^*)_t)$ is a local martingale on $(\Omega^*, \mathcal{F}^*, (\mathcal{F}^*_t)_{t \geq 0}, \mathbb{P}^*)$.
\end{lemma}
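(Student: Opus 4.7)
The plan is to first strengthen the local martingale property of $N := F(\Psi(X))$ from the ambient filtration $(\mathcal{F}_t)_{t \geq 0}$ to the natural filtration $\mathcal{F}^X_t := \sigma(X_s : s \leq t)$ of $X$ on the original space, and then transfer this property to the canonical space via a standard pushforward argument, using that a suitably chosen stopped version of $N$ is a Borel functional of the stopped path of $X$.

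By the nonanticipativity condition (i), $\Psi(X)_t = \Psi(X^t)_t$, so $N_t$ is $\mathcal{F}^X_t$-measurable and $N$ is $(\mathcal{F}^X_t)$-adapted. Let $A_t := \sup_{v \leq t} |\Psi(X)_v|$, which (by c\`adl\`ag reduction to countable suprema) is a c\`adl\`ag increasing $(\mathcal{F}^X_t)$-adapted process, and define $S_n := \inf\{t \geq 0 : A_t \geq n\} \wedge n$. Monotonicity and right-continuity of $A$ give $\{S_n \leq t\} = \{A_t \geq n\}$ for $t < n$, so $S_n$ is an $(\mathcal{F}^X_t)$-stopping time, and local boundedness of c\`adl\`ag paths implies $S_n \uparrow \infty$ almost surely. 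On $\{t < S_n\}$ we have $|\Psi(X)_t| < n$, so $|\Psi(X)_{S_n-}| \leq n$, and the bounded-jumps hypothesis (ii) gives $|\Psi(X)^{S_n}_t| \leq n + M$ for all $t$. Since $F$ is continuous, $N^{S_n}$ is bounded; being a local martingale on $(\Omega, \mathcal{F}, (\mathcal{F}_t), \mathbb{P})$ stopped at an $(\mathcal{F}_t)$-stopping time, it is a true martingale. For $0 \leq s \leq t$, the tower property applied via $\mathcal{F}^X_s \subseteq \mathcal{F}_s$ yields
$$
\mathbb{E}\bigl[N^{S_n}_t \bigm| \mathcal{F}^X_s\bigr] = \mathbb{E}\bigl[\mathbb{E}[N^{S_n}_t \mid \mathcal{F}_s] \bigm| \mathcal{F}^X_s\bigr] = \mathbb{E}\bigl[N^{S_n}_s \bigm| \mathcal{F}^X_s\bigr] = N^{S_n}_s ,
$$
so $N^{S_n}$ is also a martingale under $(\mathcal{F}^X_t)$, and $N$ is a local martingale on $(\Omega, \mathcal{F}, (\mathcal{F}^X_t), \mathbb{P})$.

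The construction of $S_n$ is purely functional: $S_n = \tau_n(X)$ for a Borel map $\tau_n : D^{\mathcal{E}} \to [0, n]$ characterized by $\{\tau_n(x) \leq t\} = \{\sup_{v \leq t} |\Psi(x)_v| \geq n\}$, and the nonanticipativity of $\Psi$ makes $N^{S_n}_t$ a Borel functional $\phi_{n,t}(X^t)$. Setting $N^* := F(\Psi(X^*))$ and $S_n^* := \tau_n(X^*)$, the latter is an $(\mathcal{F}^*_t)$-stopping time with $S_n^* \uparrow \infty$ $\mathbb{P}^*$-a.s.\ (since $\mathbb{P}^*$-a.e.\ path lies in $\Omega^* \subseteq D^{\mathcal{E}}$), and $(N^*)^{S_n^*}_t = \phi_{n,t}((X^*)^t)$. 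For every bounded measurable $g : D^{\mathcal{E}} \to \mathbb{R}$,
$$
\mathbb{E}^{\mathbb{P}^*}\bigl[(N^*)^{S_n^*}_t \, g((X^*)^s)\bigr] = \mathbb{E}^{\mathbb{P}}\bigl[N^{S_n}_t \, g(X^s)\bigr] = \mathbb{E}^{\mathbb{P}}\bigl[N^{S_n}_s \, g(X^s)\bigr] = \mathbb{E}^{\mathbb{P}^*}\bigl[(N^*)^{S_n^*}_s \, g((X^*)^s)\bigr] ,
$$
using $\mathbb{P}^* = \mathbb{P} \circ X^{-1}$ at the two ends and the $(\mathcal{F}^X_t)$-martingale property of $N^{S_n}$ in the middle. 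Hence $(N^*)^{S_n^*}$ is an $(\mathcal{F}^*_t)$-martingale under $\mathbb{P}^*$, and letting $n \to \infty$ gives the claim.

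The main technical subtlety lies in the first step, since the natural filtration $(\mathcal{F}^X_t)$ need not be right-continuous; defining the localizing sequence via the running supremum $A_t$ rather than directly as a hitting time of the set $\{|\Psi(X)| \geq n\}$ is precisely what keeps $\{S_n \leq t\}$ inside $\mathcal{F}^X_t$. The bounded-jumps hypothesis is then what yields boundedness of $N^{S_n}$, which is essential both for upgrading to a true martingale on $(\mathcal{F}_t)$ and for the subsequent descent to $(\mathcal{F}^X_t)$. Once this internal transfer is accomplished, the move to the canonical space is a routine identity-in-law computation.
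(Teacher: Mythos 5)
Your proof is correct and follows essentially the same strategy as the paper: localize via a stopping time that is a functional of the path (so that it makes sense on both spaces), use the bounded-jumps hypothesis to make the stopped process bounded and hence a true martingale, and then transfer the martingale property to the canonical space through the identity in law tested against cylinder functionals of the stopped path. Your only deviation is the choice of localizing sequence --- the running-supremum construction $S_n = \inf\{t: A_t \geq n\} \wedge n$, which is in fact slightly more careful than the paper's hitting time $\inf\{t>0: |\Psi(x)(t)| \geq n\}$ in that it manifestly yields a stopping time for the raw (non-right-continuous) natural filtration.
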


\begin{proof}
	For $n \in \mathbb{N}^*$, define $\mathcal{T}_n: D^d_0 \to [0, \infty]$ via $\mathcal{T}_n(y) \coloneqq \inf\{t > 0: |y(t)| \geq n\}$, $y \in D^d_0$, then define $\Phi_n: D^d_0 \to D^d_0$ via $\Phi_n(y) \coloneqq y^{\mathcal{T}_n(y)}$, $y \in D^d_0$. Note that both $\mathcal{T}_n$ and $\Phi_n$ are measurable maps. Assumption (i) tells us that $\Psi(X)$ is adapted to $(\mathcal{F}_t)_{t \geq 0}$, and $\Psi(X^*)$ is adapted to $(\mathcal{F}^*_t)_{t \geq 0}$. Thus, $\tau_n \coloneqq \mathcal{T}_n(\Psi(X))$ is an $(\mathcal{F}_t)_{t \geq 0}$-stopping time, and $\tau^*_n \coloneqq \mathcal{T}_n(\Psi(X^*))$ is an $(\mathcal{F}^*_t)_{t \geq 0}$-stopping time. By assumption (ii), we have that $|\Phi_n \circ \Psi(x)| \leq n + M$ for all $x \in D^{\mathcal{E}}$, so both $\Psi(X)^{\tau_n} = \Phi_n \circ \Psi(X)$ and $\Psi(X^*)^{\tau^*_n} = \Phi_n \circ \Psi(X^*)$ are bounded processes. It is easy to check that $\Phi_n \circ \Psi$ satisfies (i) and (ii). Therefore, it suffices to prove the lemma for the case where $\Psi(x)(t)$ is uniformly bounded in $x \in D^{\mathcal{E}}$ and $t \geq 0$. In particular, by the continuity of $F$, in this case both processes $t \mapsto F(\Psi(X)_t)$ and $t \mapsto F(\Psi(X^*)_t)$ are bounded.
	
	Now assume that $t \mapsto F(\Psi(X)_t)$ is a bounded martingale on $(\Omega, \mathcal{F}, (\mathcal{F}_t)_{t \geq 0}, \mathbb{P})$, and we prove $t \mapsto F(\Psi(X^*)_t)$ is a bounded martingale on $(\Omega^*, \mathcal{F}^*, (\mathcal{F}^*_t)_{t \geq 0}, \mathbb{P}^*)$. Let $0 \leq s < t$. Our goal is to show
	\begin{equation*}
		\mathbb{E}^*[F(\Psi(X^*)_t) \bm{1}_F]
		= \mathbb{E}^*[F(\Psi(X^*)_s) \bm{1}_F],\quad
		\forall\, F \in \mathcal{F}^*_s.
	\end{equation*}
	By Dynkin's $\pi$-$\lambda$ theorem, it suffices to take $F$ of the form $\{X^*_{s_1} \in A_1, ..., X^*_{s_n} \in A_n\}$, where $0 \leq s_1 < \cdots < s_n \leq s$, $A_1, ..., A_n \in \mathcal{B}(\mathcal{E})$ and $n \in \mathbb{N}^*$. Then, by the definition of $\mathbb{P^*}$ and the martingale property of $\Psi(X)$ on $(\Omega, \mathcal{F}, (\mathcal{F}_t)_{t \geq 0}, \mathbb{P})$, it follows that
	\begin{equation*}
		\begin{split}
			\mathbb{E}^*\bigl[F(\Psi(X^*)_t) \bm{1}_{\{X^*_{s_1} \in A_1, ..., X^*_{s_n} \in A_n\}}\bigr]
			&= \mathbb{E}\bigl[F(\Psi(X)_t) \bm{1}_{\{X_{s_1} \in A_1, ..., X_{s_n} \in A_n\}}\bigr]\\
			&= \mathbb{E}\bigl[F(\Psi(X)_s) \bm{1}_{\{X_{s_1} \in A_1, ..., X_{s_n} \in A_n\}}\bigr]\\
			&= \mathbb{E}^*\bigl[F(\Psi(X^*)_s) \bm{1}_{\{X^*_{s_1} \in A_1, ..., X^*_{s_n} \in A_n\}}\bigr],
		\end{split}
	\end{equation*}
	which finishes the proof.
\end{proof}

The next lemma deals with the joint convergence in law of a coupling of two convergent sequences.

\begin{lemma}\label{lem:joint_wconv}
	Let $\mathcal{E}_1$, $\mathcal{E}_2$, $\mathcal{E}_3$ be Polish spaces. Let $(Y^m, Z^m)_{m \in \overline{\mathbb{N}}^*}$ be a collection of $(\mathcal{E}_1 \times \mathcal{E}_2)$-valued random variables, possibly defined on different probability spaces with probability measures $(\mathbb{P}^m)_{m \in \overline{\mathbb{N}}^*}$. Let $f: \mathcal{E}_2 \to \mathcal{E}_3$ be a measurable function. Suppose that $\mathbb{P}^m \circ (Y^m, Z^m)^{-1} \Rightarrow \mathbb{P}^\infty \circ (Y^\infty, Z^\infty)^{-1}$ and $\mathbb{P}^m \circ (Z^m, f(Z^m))^{-1} \Rightarrow \mathbb{P}^\infty \circ (Z^\infty, f(Z^\infty))^{-1}$, as $m \to \infty$. Then, $\mathbb{P}^m \circ (Y^m, Z^m, f(Z^m))^{-1} \Rightarrow \mathbb{P}^\infty \circ (Y^\infty, Z^\infty, f(Z^\infty))^{-1}$, as $m \to \infty$.
\end{lemma}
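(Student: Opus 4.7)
The plan is to establish tightness of the triples $(Y^m, Z^m, f(Z^m))$ and then identify every subsequential weak limit as having the same distribution as $(Y^\infty, Z^\infty, f(Z^\infty))$. Since $\mathcal{E}_1 \times \mathcal{E}_2 \times \mathcal{E}_3$ is Polish, this is enough to conclude weak convergence along the full sequence.

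First, I would note that each marginal sequence is tight: $(Y^m)$ and $(Z^m)$ are tight by the first hypothesis, and $(f(Z^m))$ is tight by the second hypothesis (it converges in law, being the projection of $\mathbb{P}^m \circ (Z^m, f(Z^m))^{-1}$). Tightness of the marginals on Polish spaces gives tightness of the joint distribution. By Prokhorov, any subsequence admits a further subsequence $(m_k)$ along which $(Y^{m_k}, Z^{m_k}, f(Z^{m_k}))$ converges in law to some triple $(\widetilde Y, \widetilde Z, \widetilde W)$. The continuous mapping theorem applied to the projections yields $(\widetilde Y, \widetilde Z) \stackrel{d}{=} (Y^\infty, Z^\infty)$ and $(\widetilde Z, \widetilde W) \stackrel{d}{=} (Z^\infty, f(Z^\infty))$.

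The key step, and the only point where one must be slightly careful, is to argue that $\widetilde W = f(\widetilde Z)$ almost surely, despite $f$ being only measurable. For this I would observe that the graph
\begin{equation*}
\Gamma_f := \{(z,w) \in \mathcal{E}_2 \times \mathcal{E}_3 : w = f(z)\}
\end{equation*}
is a measurable subset of $\mathcal{E}_2 \times \mathcal{E}_3$: indeed, $\mathcal{E}_3$ being separable metric has a measurable diagonal, and $\Gamma_f$ is the preimage of this diagonal under the measurable map $(z,w) \mapsto (f(z),w)$. Since $(Z^\infty, f(Z^\infty))$ takes values in $\Gamma_f$ by construction, and the marginal law of $(\widetilde Z, \widetilde W)$ matches that of $(Z^\infty, f(Z^\infty))$, we get $\mathbb{P}((\widetilde Z, \widetilde W) \in \Gamma_f) = 1$, i.e., $\widetilde W = f(\widetilde Z)$ a.s.

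Once $\widetilde W$ is pinned down as a function of $\widetilde Z$, the joint law is determined by that of $(\widetilde Y, \widetilde Z)$, and so
\begin{equation*}
(\widetilde Y, \widetilde Z, \widetilde W) = (\widetilde Y, \widetilde Z, f(\widetilde Z)) \stackrel{d}{=} (Y^\infty, Z^\infty, f(Z^\infty)).
\end{equation*}
Since every subsequential weak limit coincides with $\mathbb{P}^\infty \circ (Y^\infty, Z^\infty, f(Z^\infty))^{-1}$, the full sequence converges in law to this measure by the usual subsequence argument. The main (mild) obstacle is the measurability-of-the-graph point above; everything else is just tightness bookkeeping.
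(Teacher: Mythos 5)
Your proof is correct. It shares the overall architecture of the paper's argument (tightness of the coupled sequence, extraction of a subsequential limit, identification of that limit, and the standard subsequence principle), but the identification step is carried out differently. The paper duplicates the $\mathcal{E}_2$-coordinate and works with the laws of the quadruples $(Y^m, Z^m, Z^m, f(Z^m))$ on $\mathcal{E}_1 \times \mathcal{E}_2 \times \mathcal{E}_2 \times \mathcal{E}_3$; it then applies the Portmanteau theorem to the closed diagonal $\{x_2 = x_3\}$ to force the two middle coordinates of the limit to agree, and finishes by disintegrating the limit measure to show the conditional law of $(x_3, x_4)$ given $(x_1, x_2)$ is $\delta_{(x_2, f(x_2))}$. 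You instead stay on the triple product and observe that the graph $\Gamma_f$ is Borel (preimage of the closed diagonal of the separable metric space $\mathcal{E}_3$ under the measurable map $(z,w) \mapsto (f(z), w)$), so the property of being concentrated on $\Gamma_f$ transfers from the law of $(Z^\infty, f(Z^\infty))$ to the limit marginal $(\widetilde Z, \widetilde W)$, pinning down $\widetilde W = f(\widetilde Z)$ a.s.; the joint law then follows by pushing the law of $(\widetilde Y, \widetilde Z) \overset{d}{=} (Y^\infty, Z^\infty)$ through the measurable map $(y,z) \mapsto (y, z, f(z))$. Your route is slightly more economical in that it avoids the auxiliary fourth coordinate and the disintegration, at the cost of making the graph-measurability point explicit — a point the paper's disintegration step also relies on implicitly when it asserts $x_4 = f(x_3)$ under $\mu_{34}$. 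Both arguments are complete and correct.
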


\begin{proof}
	First we take any subsequence $(m_k)_{k \in \mathbb{N^*}}$ of $\mathbb{N}^*$. Consider the sequence of probability measures $(\mathbb{P}^{m_k} \circ (Y^{m_k}, Z^{m_k}, Z^{m_k}, f(Z^{m_k}))^{-1})_{k \in \mathbb{N}^*}$ on $\mathcal{E}_1 \times \mathcal{E}_2 \times \mathcal{E}_2 \times \mathcal{E}_3$. This is a tight sequence, since it is a coupling of two tight sequences of probability measures on $\mathcal{E}_1 \times \mathcal{E}_2$ and $\mathcal{E}_2 \times \mathcal{E}_3$. Then, there exists a further subsequence $(m_{k_l})_{l \in \mathbb{N^*}}$ of $(m_k)_{k \in \mathbb{N^*}}$ and a probability measure $\mu$ on $\mathcal{E}_1 \times \mathcal{E}_2 \times \mathcal{E}_2 \times \mathcal{E}_3$, such that $\mathbb{P}^{m_{k_l}} \circ (Y^{m_{k_l}}, Z^{m_{k_l}}, Z^{m_{k_l}}, f(Z^{m_{k_l}}))^{-1} \Rightarrow \mu$. Let $\mu_{12}(dx_1, dx_2)$, $\mu_{34}(dx_3, dx_4)$ denote the marginals of $\mu$ on the first and last two coordinates, and let $\mu(dx_1, dx_2, dx_3, dx_4) = \mu_{34|12}(x_1, x_2, dx_3, dx_4) \mu_{12}(dx_1, dx_2)$ be the disintegration. We know that $\mu_{12} = \mathbb{P}^\infty \circ (Y^\infty, Z^\infty)^{-1}$ and $\mu_{34} = \mathbb{P}^\infty \circ (Z^\infty, f(Z^\infty))^{-1}$. We also know from the Portmanteau theorem that
	\begin{equation*}
		\mu(\{x_2 = x_3\})
		\geq \limsup_{l \to \infty} \mathbb{P}^{m_{k_l}}(Z^{m_{k_l}} = Z^{m_{k_l}}) = 1.
	\end{equation*}
	This implies that $\mu_{34|12}(x_1, x_2, dx_3, dx_4) = \delta_{(x_2, f(x_2))}(dx_3, dx_4)$, thus we conclude that $\mu = \mathbb{P}^\infty \circ (Y^\infty, Z^\infty, Z^\infty, f(Z^\infty))^{-1}$.
	
	So far we have proved that for any subsequence of $(\mathbb{P}^m \circ (Y^m, Z^m, f(Z^m))^{-1})$, there exists a further subsequence that converges weakly to $\mathbb{P}^\infty \circ (Y^\infty, Z^\infty, f(Z^\infty))^{-1}$. This implies the weak convergence of the whole sequence, and the proof is complete.
\end{proof}

\begin{remark}
	If $f$ is a continuous function, then by the continuous mapping theorem, $\mathbb{P}^m \circ (Y^m, Z^m)^{-1} \Rightarrow \mathbb{P}^\infty \circ (Y^\infty, Z^\infty)^{-1}$ implies $\mathbb{P}^m \circ (Y^m, Z^m, f(Z^m))^{-1} \Rightarrow \mathbb{P}^\infty \circ (Y^\infty, Z^\infty, f(Z^\infty))^{-1}$. However, in Lemma~\ref{lem:joint_wconv} we only assume the measurability of $f$. Thus, the extra assumption $\mathbb{P}^m \circ (Z^m, f(Z^m))^{-1} \Rightarrow \mathbb{P}^\infty \circ (Z^\infty, f(Z^\infty))^{-1}$ is needed.
\end{remark}

We introduce below a short notation for the running integral of an optional random function against a random measure, following \cite{MR1943877}, Equation II.1.5.

\begin{definition}
	Let $(\Omega, \mathcal{F}, (\mathcal{F}_t)_{t \geq 0}, \mathbb{P})$ be a filtered probability space. Let $\mathcal{O}$ be the optional $\sigma$-algebra on $\Omega \times \mathbb{R}_+$, and $\mu: \Omega \times \mathcal{B}(\mathbb{R}_+ \times \mathbb{R}^d) \to [0, \infty]$ be a random measure. Let $W: \Omega \times \mathbb{R}_+ \times \mathbb{R}^d \to \mathbb{R}$ be a measurable function with respect to $\mathcal{O} \otimes \mathcal{B}(\mathbb{R}^d)$. We define the process $W * \mu$ via
	\begin{equation*}
		(W * \mu)_t(\omega) \coloneqq
		\int_{[0, t] \times \mathbb{R}^d} W(\omega, s, x) \,\mu(\omega, ds, dx),
	\end{equation*}
	whenever $W(\omega, \cdot)$ is integrable with respect to $\mu(\omega, \cdot)$. Otherwise, set $(W * \mu)_t(\omega) = \infty$. If $W$ is $\mathbb{R}^d$-valued, we define $W * \mu$ component-wise.
\end{definition}

The next definition is about a property called \emph{convergence determining}. This notion is already introduced in \cite{MR1943877}, Definition~VII.2.7. We rephrase it below, and stick to their notation.

\begin{definition}\label{def:conv_det}
	Let $C_1(\mathbb{R}^d)$ be any subclass of nonnegative bounded continuous functions from $\mathbb{R}^d$ to $\mathbb{R}$ which are $0$ in a neighborhood of $0$, containing all functions of the form $(a|x| - 1)^+ \land 1$, $a \in \mathbb{Q}_+$, and satisfying the following property: let $(\eta_n)$, $\eta$ be positive Borel measures on $\mathbb{R}^d$ which do not charge $\{0\}$ and are finite on $\{x: |x| \geq r\}$ for all $r > 0$, then $\eta_n(f) \to \eta(f)$ for all $f \in C_1(\mathbb{R}^d)$ implies $\eta_n(f) \to \eta(f)$ for all bounded continuous functions $f$ which are $0$ in a neighborhood of $0$. We call $C_1(\mathbb{R}^d)$ a convergence determining class (for the weak convergence induced by bounded continuous functions which are $0$ in a neighborhood of $0$).
\end{definition}

\begin{remark}\label{rem:conv_det}
	As was mentioned in \cite{MR1943877}, right after Definition~VII.2.7, there exists a class $C_1(\mathbb{R}^d)$ which is countable. This will be convenient when proving our main theorem. Note that convergence determining implies \emph{measure determining}: let $\eta$, $\eta^\prime$ be positive Borel measures on $\mathbb{R}^d$ which do not charge $\{0\}$ and are finite on $\{x: |x| \geq r\}$ for all $r > 0$, then $\eta(f) = \eta^\prime(f)$ for all $f \in C_1(\mathbb{R}^d)$ implies $\eta = \eta^\prime$.
\end{remark}

\section{Proof of Theorem~\ref{thm:mp}}\label{sec:4}
With all the preparations in Section~\ref{sec:3}, we are now able to prove our main results. To better align with the proof of \cite{MR3098443}, Theorem~7.1, we will break our proof into several steps.

\begin{proof}[Proof of Theorem~\ref{thm:mp}]
	The existence of $\widehat{b}$, $\widehat{c}$ and $\widehat{\kappa}$ satisfying \eqref{eq:cond_exp} follows from \cite{MR3098443}, Proposition~5.1 and \cite{MR4814246}, Lemma~2.5 (which obviously extends to $\mathcal{E}$-valued processes and transition kernels from $\mathbb{R}_+ \times \mathcal{E}$ to $\mathbb{R}^d$). Also, without loss of generality, we may assume that $h$ is continuous. Otherwise, we can take a continuous truncation function $\widetilde{h}$ and prove the theorem. With back-and-forth applications of \eqref{eq:B(h)}, we first compute the characteristics of $Y$ associated with $\widetilde{h}$, apply the theorem with $\widetilde{h}$, then compute the characteristics of $\widehat{Y}$ associated with $h$. This argument is similar to the proof of Corollary~\ref{cor:mp}.
	
	\medskip
	\emph{Step 1: Canonical space and processes.}\quad We define the measure-valued process $M$ on $\Omega$ via
	\begin{equation*}
		M_t(A)
		\coloneqq \int_{[0, t] \times A} 1 \land |\xi|^2 \,\nu(ds, d\xi)
		= \int_0^t \int_A 1 \land |\xi|^2 \,\kappa_s(d\xi) \,ds,\quad
		t \geq 0,\, A \in \mathcal{B}(\mathbb{R}^d).
	\end{equation*}
	Then, the random object $(Z_0, Y, B, C, M)$ takes values in $\mathcal{E} \times D^d_0 \times C^d_0 \times C^{d^2}_0 \times C^{\mathcal{M}_+, d}_{0, \mathrm{i}}$ $\mathbb{P}$-a.s. In order to utilize the approximation results developed in \cite{MR3098443}, we need to use a randomized discretization of time, which leads to an extra dimension. Thus, we define our canonical space as $\Omega^* \coloneqq [0, 1] \times \mathcal{E} \times D^d_0 \times C^d_0 \times C^{d^2}_0 \times C^{\mathcal{M}_+, d}_{0, \mathrm{i}}$. By viewing $\mathcal{E}^* \coloneqq [0, 1] \times \mathcal{E}$ as a new Polish space, and noticing $\mathcal{X}^* \coloneqq D^d_0 \times C^d_0 \times C^{d^2}_0 \times C^{\mathcal{M}_+, d}_{0, \mathrm{i}}$ is a $\Delta$-stable closed subset of $D^{\mathcal{E}^{*\prime}}_0$ with $\mathcal{E}^{*\prime} = \mathbb{R}^d \times \mathbb{R}^d \times \mathbb{R}^{d^2} \times \mathcal{M}_+(\mathbb{R}^d)$, one can write $\Omega^* = \Omega^{\mathcal{E}^*, \mathcal{X}^*} = \mathcal{E}^* \times \mathcal{X}^*$, so all the results established in Section~\ref{sec:3} apply to $\Omega^*$. The generic element of $\Omega^*$ is denoted by $\omega = (u, \varepsilon, \eta, \beta, \gamma, \mu)$, and the projections are denoted by
	\begin{equation*}
		U^*(\omega)   \coloneqq u,\quad
		Z_0^*(\omega) \coloneqq \varepsilon,\quad
		Y^*(\omega)   \coloneqq \eta,\quad
		B^*(\omega)   \coloneqq \beta,\quad
		C^*(\omega)   \coloneqq \gamma,\quad
		M^*(\omega)   \coloneqq \mu.
	\end{equation*}
	We also write $x = (\eta, \beta, \gamma, \mu)$ and $X = (Y^*, B^*, C^*, M^*)$. Let $\mathcal{F}^* \coloneqq \sigma(U^*, Z_0^*, X)$ and $\mathcal{F}^*_t \coloneqq \sigma(U^*, Z_0^*, X^t)$ for $t \geq 0$. Denote $\mathbb{F}^* = (\mathcal{F}^*_t)_{t \geq 0}$ and let $\widetilde{\mathbb{F}}^* = (\widetilde{\mathcal{F}}^*_t)_{t \geq 0}$ be the right-continuous regularization of $\mathbb{F}^*$, i.e.\ $\widetilde{\mathcal{F}}^*_t \coloneqq \bigcap_{s > t} \mathcal{F}^*_s$, $t \geq 0$.\footnote{To apply the theory of characteristics of semimartingales established in \cite{MR1943877}, we need to work with right-continuous filtrations. This is only for technical reasons, and barely complicates our proof.} Unless otherwise stated, we always refer to the natural filtration $\mathbb{F}^*$. When we work with characteristics, we will explicitly mention $\widetilde{\mathbb{F}}^*$. We define a probability measure $\mathbb{Q}$ on $\Omega^*$, which is the product of the Lebesgue measure on $[0, 1]$ and the law of $(Z_0, Y, B, C, M)$ on $\mathcal{E} \times D^d_0 \times C^d_0 \times C^{d^2}_0 \times C^{\mathcal{M}_+, d}_{0, \mathrm{i}}$ under $\mathbb{P}$, i.e.\
	\begin{equation*}
		\mathbb{Q} \coloneqq \mathrm{Leb}([0, 1]) \otimes (\mathbb{P} \circ (Z_0, Y, B, C, M)^{-1}).
	\end{equation*}
	Then under $\mathbb{Q}$, we know that $U^* \sim \mathrm{Unif}([0, 1])$, $(Z_0^*, Y^*, B^*, C^*, M^*)$ has the same joint law as $(Z_0, Y, B, C, M)$, and it is independent of $U^*$.
	
	According to Lemma~\ref{lem:cstr_nu}, for each $\omega \in \Omega^*$, there exists a Borel measure $\lambda^*(\omega; \cdot)$ on $\mathbb{R}_+ \times \mathbb{R}^d$ such that\footnote{For a random measure like $M^*_t(\omega; \cdot)$, we often omit its dependency on $\omega$ and simply write $M^*_t(\cdot)$.}
	\begin{equation*}
		\lambda^*(\omega; [0, t] \times A) = M^*_t(\omega; A),\quad
		\forall\, t \geq 0,\, A \in \mathcal{B}(\mathbb{R}^d).
	\end{equation*}
	By Dynkin's $\pi$-$\lambda$ theorem, it is easy to see that $\lambda^*$ is a random measure, i.e.\ $\lambda^*(\omega; E)$ is measurable in $\omega$ for each fixed $E \in \mathcal{B}(\mathbb{R}_+ \times \mathbb{R}^d)$. Moreover, for each $0 \leq r < s$, $F \in \widetilde{\mathcal{F}}^*_r$, and $A \in \mathcal{B}(\mathbb{R}^d)$, let $W(\omega, u, \xi) = \bm{1}_{F \times (r, s] \times A}(\omega, u, \xi)$. Then, the process
	\begin{equation*}
		(W * \lambda^*)_t
		= \bm{1}_F (M^*_{s \land t}(A) - M^*_{r \land t}(A))
	\end{equation*}
	is continuous (recall \eqref{eq:mu_cts}) and adapted to $\widetilde{\mathbb{F}}^*$, thus predictable. By another application of Dynkin's $\pi$-$\lambda$ theorem, this suffices to show that $\lambda^*$ is a predictable random measure (with respect to $\widetilde{\mathbb{F}}^*$). Then, we define $\nu^*(ds, d\xi) \coloneqq \bm{1}_{\{\xi \neq 0\}} (1 \land |\xi|^2)^{-1} \lambda^*(ds, d\xi)$, which is again a predictable random measure. It follows that for every measurable function $f: \mathbb{R}^d \to \mathbb{R}$ satisfying $|f(\xi)| \leq C (1 \land |\xi|^2)$, $\forall\, \xi \in \mathbb{R}^d$, for some constant $C > 0$,
	\begin{equation}\label{eq:nu_M}
		(f * \nu^*)_t
		= \int_{\mathbb{R}^d} \frac{f(\xi)}{1 \land |\xi|^2} \,M^*_t(d\xi),\quad t \geq 0,
	\end{equation}
	and this process is continuous as $|(f * \nu^*)_t - (f * \nu^*)_s| \leq C |M^*_t(\mathbb{R}^d) - M^*_s(\mathbb{R}^d)|$.
	
	Define the process $Y^*(h) \coloneqq Y^* - \sum_{s \leq \cdot} (\Delta Y^*_s - h(\Delta Y^*_s))$, which has bounded jumps as $\Delta Y^*(h) = h(\Delta Y^*)$. Define the process $\widetilde{C}^* \coloneqq C^* + (hh^\top) * \nu^*$.\footnote{The process $\widetilde{C}^*$ is the candidate for the \emph{modified second characteristic} of $Y^*$ associated with $h$. See \cite{MR1943877}, Definition~II.2.16. Eventually, one can show $\widetilde{C}^*_{ij} = \langle (Y^*(h) - B^*)_i, (Y^*(h) - B^*)_j \rangle$, where $Y^*(h) - B^*$ is a locally square integrable martingale.} Let $\mu^{Y^*}(dt, d\xi) \coloneqq \sum_{s > 0} \bm{1}_{\{\Delta Y^*_s \neq 0\}} \delta_{(s, \Delta Y^*_s)}(dt, d\xi)$ denote the integer-valued random measure associated with the jumps of $Y^*$. We may also define their counterparts $Y(h)$, $\widetilde{C}$ and $\mu^Y$ on the original probability space $\Omega$. Then, using Lemma~\ref{lem:loc_mtg} and \eqref{eq:nu_M}, one can show the following processes are local martingales on $(\Omega^*, \mathcal{F}^*, \mathbb{F}^*, \mathbb{Q})$:
	\begin{enumerate}[label=(\roman*), topsep=0.3em, noitemsep]
		\item $Y^*(h) - B^*$,
		
		\item $(Y^*(h) - B^*) (Y^*(h) - B^*)^\top - \widetilde{C}^*$,
		
		\item $f * \mu^{Y^*} - f * \nu^*$, where $f: \mathbb{R}^d \to \mathbb{R}$ is measurable and satisfies $|f(\xi)| \leq C (1 \land |\xi|^2)$, $\forall\, \xi \in \mathbb{R}^d$, for some constant $C > 0$.
	\end{enumerate}
	Since all these processes are c\`adl\`ag, it is easy to see that they are local martingales with respect to the right-continuous regularized filtration $\widetilde{\mathbb{F}}^*$. We also note that $C^*_t - C^*_s$ takes values in $\mathbb{S}^d_+$ for all $0 \leq s \leq t$, $\mathbb{Q}$-a.s. Thus, according to \cite{MR1943877}, Theorem~II.2.21, $Y^*$ is a semimartingale with characteristics triplet $(B^*, C^*, \nu^*)$ (associated with $h$) on the filtered probability space $(\Omega^*, \mathcal{F}^*, \widetilde{\mathbb{F}}^*, \mathbb{Q})$.
	
	Next, we take a sequence of functions $(f_k)_{k \in \mathbb{N}^*}$ which is a class $C_1(\mathbb{R}^d)$ (recall Definition~\ref{def:conv_det} and Remark~\ref{rem:conv_det}). This countable collection $(f_k)$ is convergence determining, thus measure determining. Without loss of generality, we may include functions $(h_i h_j)_{i, j=1}^d$ to the sequence $(f_k)$ and keep the same notation, where $h_i$ is the $i$-th component of $h$. For each $i, j = 1, ..., d$, let $k(i, j)$ be the index such that $h_i h_j = f_{k(i, j)}$. Although $h_i h_j$ does not vanish around $0$, it is a continuous function satisfying $|h_i h_j| \leq C(1 \land \lvert \cdot \rvert^2)$ for some constant $C > 0$. Also, adding a finite number of functions does no harm to our following arguments. We define the process
	\begin{equation}\label{eq:G*}
		G^*_{k, t}
		\coloneqq (f_k * \nu^*)_t
		= \int_{\mathbb{R}^d} \frac{f_k(\xi)}{1 \land |\xi|^2} \,M^*_t(d\xi).
	\end{equation}
	We also define its counterpart $G_k$ on the original space $\Omega$. In particular, $G_k$ has the form of a Riemann integral: $G_{k, t} = \int_0^t \int_{\mathbb{R}^d} f_k(\xi) \,\kappa_s(d\xi) \,ds$. By the definition of $k(i, j)$, we have
	\begin{equation}\label{eq:C_G}
		\widetilde{C}^*_{ij} = C^*_{ij} + G^*_{k(i, j)},\quad
		\widetilde{C}_{ij} = C_{ij} + G_{k(i, j)}.
	\end{equation}
	We define the $\mathbb{R}^d$-valued predictable process $b^* = (b^*_i)$, the $\mathbb{R}^{d^2}$-valued predictable process $c^* = (c^*_{ij})$, and the real-valued predictable processes $g^*_k$, $k \in \mathbb{N}^*$, via
	\begin{equation*}
		\begin{split}
			b^*_{i, t}
			&\coloneqq \bm{1}_{\mathbb{R}} \biggl(\liminf_{n \to \infty} \frac{B^*_{i, t} - B^*_{i, (t - 1/n)^+}}{1/n}\biggr),\\
			c^*_{ij, t}
			&\coloneqq \bm{1}_{\mathbb{R}} \biggl(\liminf_{n \to \infty} \frac{C^*_{ij, t} - C^*_{ij, (t - 1/n)^+}}{1/n}\biggr),\\
			g^*_{k, t}
			&\coloneqq \bm{1}_{\mathbb{R}} \biggl(\liminf_{n \to \infty} \frac{G^*_{k, t} - G^*_{k, (t - 1/n)^+}}{1/n}\biggr).
		\end{split}
	\end{equation*}
	Since $B$, $C$, $(G_k)$ are absolutely continuous $\mathbb{P}$-a.s., we know that $B^*$, $C^*$, $(G^*_k)$ are absolutely continuous $\mathbb{Q}$-a.s. Consequently,
	\begin{equation}\label{eq:AC_Qas}
		\begin{split}
			\mathbb{Q}\biggl(\int_0^t (|b^*_s| + |c^*_s| + |g^*_{k, s}|) \,ds < \infty,\, B^*_t &= \int_0^t b^*_s \,ds,\, C^*_t = \int_0^t c^*_s \,ds,\\ G^*_{k, t} &= \int_0^t g^*_{k, s} \,ds,\, \forall\, k \in \mathbb{N}^*,\, t \geq 0\biggr) = 1.
		\end{split}
	\end{equation}
	
	Set $Z^* = \Phi(Z_0^*, Y^*)$. Note that the joint law of $(Y^*, Z^*, B^*, C^*, (G^*_k))$ under $\mathbb{Q}$ agrees with the joint law of $(Y, Z, B, C, (G_k))$ under $\mathbb{P}$. Thus, \eqref{eq:char} and \eqref{eq:AC_Qas} imply that for Lebesgue-a.e.\ $t \geq 0$, the joint law of $(Y^*_t, Z^*_t, b^*_t, c^*_t, (g^*_{k, t}))$ under $\mathbb{Q}$ agrees with the joint law of $(Y_t, Z_t, b_t, c_t, (\int_{\mathbb{R}^d} f_k(\xi) \,\kappa_t(d\xi)))$ under $\mathbb{P}$. It follows that
	\begin{equation}\label{eq:exp_id}
		\mathbb{E}^{\mathbb{Q}}\biggl[\int_0^t f(Y^*_s, Z^*_s, b^*_s, c^*_s, (g^*_{k, s})) \,ds\biggr]
		= \mathbb{E}\biggl[\int_0^t f\biggl(Y_s, Z_s, b_s, c_s, \biggl(\int_{\mathbb{R}^d} f_k(\xi) \,\kappa_s(d\xi)\biggr)\biggr) \,ds\biggr],
	\end{equation}
	for all $t > 0$ and measurable $f: \mathbb{R}^{\mathbb{N}} \to \mathbb{R}$ such that either side (and then both sides) of \eqref{eq:exp_id} is well-defined. In particular, \eqref{eq:thm_asm} and \eqref{eq:exp_id} yield
	\begin{equation}\label{eq:int_cond}
		\mathbb{E}^{\mathbb{Q}}\biggl[\int_0^t (|b^*_s| + |c^*_s| + |g^*_{k, s}|) \,ds\biggr] < \infty,\quad
		\forall\, k \in \mathbb{N}^*,\, t > 0.
	\end{equation}
	Recall that at the beginning of this proof, we have already established the existence of $\widehat{b}$, $\widehat{c}$ and $\widehat{\kappa}$ satisfying \eqref{eq:cond_exp}. For each $k \in \mathbb{N}^*$, define $\widehat{g}_k: \mathbb{R}_+ \times \mathcal{E} \to \mathbb{R}$ via
	\begin{equation}\label{eq:ghat}
		\widehat{g}_k(t, z)
		\coloneqq \int_{\mathbb{R}^d} f_k(\xi) \,\widehat{\kappa}(t, z, d\xi),\quad
		t \geq 0,\, z \in \mathcal{E}.
	\end{equation}
	Then, by \eqref{eq:cond_exp}, \eqref{eq:cond_exp_2}, \eqref{eq:exp_id} and using \cite{MR3098443}, Lemma~5.2 twice, we deduce that for Lebesgue-a.e.\ $t \geq 0$,
	\begin{equation}\label{eq:cond_exp_3}
		\begin{split}
			\widehat{b}(t, Z^*_t) &= \mathbb{E}^{\mathbb{Q}}[b^*_t \,|\, Z^*_t],\\
			\widehat{c}(t, Z^*_t) &= \mathbb{E}^{\mathbb{Q}}[c^*_t \,|\, Z^*_t],\\
			\widehat{g}_k(t, Z^*_t) &= \mathbb{E}^{\mathbb{Q}}[g^*_{k, t} \,|\, Z^*_t],\quad
			\forall\, k \in \mathbb{N}^*.
		\end{split}
	\end{equation}
	From \eqref{eq:int_cond} and Jensen's inequality, we get
	\begin{equation}\label{eq:int_cond_2}
		\mathbb{E}^{\mathbb{Q}}\biggl[\int_0^t (|\widehat{b}(s, Z^*_s)| + |\widehat{c}(s, Z^*_s)| + |\widehat{g}_k(s, Z^*_s)|) \,ds\biggr] < \infty,\quad
		\forall\, k \in \mathbb{N}^*,\, t > 0.
	\end{equation}
	
	\medskip
	\emph{Step 2: Extended partitions.}\quad For $m \in \mathbb{N}^*$, set $N(m) \coloneqq m^2$. Define the stopping times $T^m_0 \coloneqq 0$, $T^m_i \coloneqq (U^* + i-1) / m$, $i = 1, ..., N(m)$, and $T^m_{N(m)+1} \coloneqq \infty$. Set $\mathcal{G}^m_0 = \mathcal{H}^m_0 \coloneqq \mathcal{F}^*_0 = \sigma(U^*, Z_0^*)$, $\mathcal{G}^m_i \coloneqq \sigma(U^*, Z^*_{T^m_i})$, $i = 1, ..., N(m)$, and $\mathcal{H}^m_i \coloneqq \mathcal{G}^m_{i-1} \lor \sigma(\Delta(X^{T^m_i}, T^m_{i-1}))$, $i = 1, ..., N(m)+1$. Then, by Lemma~\ref{lem:ext_part}~(i), $\Pi^m \coloneqq (T^m_i, \mathcal{G}^m_i)_{i=0}^{N(m)}$ is an extended partition.
	
	\medskip
	\emph{Step 3: Concatenated probability measures.}\quad According to Theorem~\ref{thm:concat_meas}, for each $m \in \mathbb{N}^*$, there exists a unique probability measure $\mathbb{Q}^m \coloneqq \mathbb{Q}^{\otimes \Pi^m}$ on $(\Omega^*, \mathcal{F}^*)$ such that
	\begin{enumerate}[label=(\roman*), topsep=0.3em, noitemsep]
		\item $\mathbb{Q}^m(A) = \mathbb{Q}(A)$, for all $A \in \mathcal{H}^m_i$, $i = 0, ..., N(m)+1$,
		
		\item $\mathbb{Q}^m(B \,|\, \mathcal{F}^*_{T^m_i}) = \mathbb{Q}(B \,|\, \mathcal{G}^m_i)$, for all $B \in \mathcal{H}^m_{i+1}$, $i = 0, ..., N(m)$.
	\end{enumerate}
	From Step 1, we already know that the following processes are local martingales under $\mathbb{Q}$:
	\begin{enumerate}[label=(\roman*), topsep=0.3em, noitemsep]
		\item $Y^*(h) - B^*$,
		
		\item $(Y^*(h) - B^*) (Y^*(h) - B^*)^\top - \widetilde{C}^*$,
		
		\item $f_k * \mu^{Y^*} - f_k * \nu^*$, $k \in \mathbb{N}^*$.
	\end{enumerate}
	By Lemma~\ref{lem:mtg_1} and Lemma~\ref{lem:mtg_2}, the processes above are also local martingales under $\mathbb{Q}^m$. Here we may use either $\mathbb{F}^*$ or $\widetilde{\mathbb{F}}^*$ due to the right-continuity of sample paths. Also, by Proposition~\ref{prop:sample_path}~(ii), $C^*_t - C^*_s$ takes values in $\mathbb{S}^d_+$ for all $0 \leq s \leq t$, $\mathbb{Q}^m$-a.s. Thus, applying \cite{MR1943877}, Theorem~II.2.21, we deduce that $Y^*$ is a semimartingale with characteristics triplet $(B^*, C^*, \nu^*)$ (associated with $h$) on $(\Omega^*, \mathcal{F}^*, \widetilde{\mathbb{F}}^*)$ under each $\mathbb{Q}^m$.
	
	We also note that by Proposition~\ref{prop:sample_path}~(i) and the definition of $b^*$, $c^*$, $(g^*_k)$, for each $m \in \mathbb{N}^*$,
	\begin{equation}\label{eq:AC_Qmas}
		\begin{split}
			\mathbb{Q}^m\biggl(\int_0^t (|b^*_s| + |c^*_s| + |g^*_{k, s}|) \,ds < \infty,\, B^*_t &= \int_0^t b^*_s \,ds,\, C^*_t = \int_0^t c^*_s \,ds,\\ G^*_{k, t} &= \int_0^t g^*_{k, s} \,ds,\, \forall\, k \in \mathbb{N}^*,\, t \geq 0\biggr) = 1.
		\end{split}
	\end{equation}
	
	\medskip
	\emph{Step 4: Tightness and convergence.}\quad By \eqref{eq:C_G}, \eqref{eq:AC_Qas}, \eqref{eq:int_cond}, \eqref{eq:AC_Qmas}, and Corollary~\ref{cor:tight}, we know that each one of the following collections of probability measures is tight:
	\begin{enumerate}[label=(\roman*), topsep=0.3em, noitemsep]
		\item $(\mathbb{Q}^m \circ (B^*)^{-1})_{m \in \mathbb{N}^*}$ (on $C^d_0$),
		
		\item $(\mathbb{Q}^m \circ (\widetilde{C}^*)^{-1})_{m \in \mathbb{N}^*}$ (on $C^{d^2}_0$),
		
		\item $(\mathbb{Q}^m \circ (G^*_k)^{-1})_{m \in \mathbb{N}^*}$ (on $C^1_0$), for each $k \in \mathbb{N}^*$.
	\end{enumerate}
	Moreover, let $\varepsilon > 0$, $T > 0$, $a > 0$. Take $p(a) \in \mathbb{Q}$ such that $2/a < p(a) < 3/a$, so we have $p(a) \to 0$ as $a \to \infty$. Let $k(a) \in \mathbb{N}^*$ be the index such that $f_{k(a)} = (p(a) \lvert \cdot \rvert - 1)^+ \land 1$. It is easy to check that $f_{k(a)} \geq \bm{1}_{\{\lvert \cdot \rvert > a\}}$, and $f_{k(a)} \to 0$ pointwisely as $a \to \infty$. Then, applying Proposition~\ref{prop:exp_id} to $G^*_{k(a)}$ and $g^*_{k(a)}$, we get
	\begin{equation}\label{eq:tight_est}
		\begin{split}
			&\mathbb{E}^{\mathbb{Q}^m}[\nu^*([0, T] \times \{\xi: |\xi| > a\})]
			= \mathbb{E}^{\mathbb{Q}^m}\biggl[\int_{\{\xi: |\xi| > a\}} \frac{1}{1 \land |\xi|^2} \,M^*_T(d\xi)\biggr]\\
			&\quad\quad\quad\quad\leq \mathbb{E}^{\mathbb{Q}^m}\biggl[\int_{\mathbb{R}^d} \frac{f_{k(a)}(\xi)}{1 \land |\xi|^2} \,M^*_T(d\xi)\biggr]
			= \mathbb{E}^{\mathbb{Q}^m}\bigl[G^*_{k(a), T}\bigr]
			= \mathbb{E}^{\mathbb{Q}^m}\biggl[\int_0^T g^*_{k(a), s} \,ds\biggr]\\
			&\quad\quad\quad\quad= \mathbb{E}^{\mathbb{Q}}\biggl[\int_0^T g^*_{k(a), s} \,ds\biggr]
			= \mathbb{E}^{\mathbb{Q}}\bigl[G^*_{k(a), T}\bigr]
			= \mathbb{E}^{\mathbb{Q}}\biggl[\int_{\mathbb{R}^d} \frac{f_{k(a)}(\xi)}{1 \land |\xi|^2} \,M^*_T(d\xi)\biggr].
		\end{split}
	\end{equation}
	By \eqref{eq:thm_asm}, we have $\mathbb{E}^{\mathbb{Q}}[M^*_T(\mathbb{R}^d)] = \mathbb{E}[M_T(\mathbb{R}^d)] < \infty$. Thus, \eqref{eq:tight_est} and the dominated convergence theorem yield that
	\begin{equation}\label{eq:tight_est2}
		\lim_{a \to \infty} \sup_{m \in \mathbb{N}^*}\mathbb{Q}^m(\nu^*([0, T] \times \{\xi: |\xi| > a\}) > \varepsilon)
		\leq \lim_{a \to \infty} \frac{1}{\varepsilon} \mathbb{E}^{\mathbb{Q}}\biggl[\int_{\mathbb{R}^d} \frac{f_{k(a)}(\xi)}{1 \land |\xi|^2} \,M^*_T(d\xi)\biggr]
		= 0.
	\end{equation}
	Therefore, given \eqref{eq:tight_est2} and the tightness of (i-iii) above, \cite{MR1943877}, Theorem~VI.4.18 tells us that the collection of measures $(\mathbb{Q}^m \circ (Y^*)^{-1})_{m \in \mathbb{N}^*}$ on $D^d_0$ is tight. Since $Z_0^*$ has the same law under every $\mathbb{Q}^m$ (recall $Z_0^*$ is $\mathcal{H}^m_0 = \mathcal{F}^*_0$-measurable), the collection of measures $(\mathbb{Q}^m \circ (Z_0^*, Y^*)^{-1})_{m \in \mathbb{N}^*}$ on $\widehat{\Omega} \coloneqq \mathcal{E} \times D^d_0$ is tight. Passing to a convergent subsequence if necessary, with an abuse of notation, we may assume $\mathbb{Q}^m \circ (Z_0^*, Y^*)^{-1}$ converges weakly to a Borel probability measure $\widehat{\mathbb{P}}$ on $\widehat{\Omega}$, as $m \to \infty$.
	
	On the space $\widehat{\Omega} = \mathcal{E} \times D^d_0$, we denote the projections by $(\widehat{Z}_0, \widehat{Y})$. Let $\widehat{\mathcal{F}} \coloneqq \sigma(\widehat{Z}_0, \widehat{Y})$ be the Borel $\sigma$-algebra, and $\widehat{\mathcal{F}}_t \coloneqq \sigma(\widehat{Z}_0, \widehat{Y}^t)$ for $t \geq 0$. Set $\widehat{Z} \coloneqq \Phi(\widehat{Z}_0, \widehat{Y})$. The continuous mapping theorem then yields that the law of $(Y^*, Z^*)$ on $D^d_0 \times D^{\mathcal{E}}$ under $\mathbb{Q}^m$ converges weakly to the law of $(\widehat{Y}, \widehat{Z})$ on $D^d_0 \times D^{\mathcal{E}}$ under $\widehat{\mathbb{P}}$, i.e.\ $\mathbb{Q}^m \circ (Y^*, Z^*)^{-1} \Rightarrow \widehat{\mathbb{P}} \circ (\widehat{Y}, \widehat{Z})^{-1}$.
	
	\medskip
	\emph{Step 5: Agreement of one-dimensional laws.}\quad From Step 4, we know that the law of $Z^*$ on $D^{\mathcal{E}}$ under $\mathbb{Q}^m$ converges weakly to the law of $\widehat{Z}$ on $D^{\mathcal{E}}$ under $\widehat{\mathbb{P}}$. Since $\widehat{Z}$ is c\`adl\`ag, we know there exists a countable set $N \subset \mathbb{R}_+$ such that $\widehat{\mathbb{P}}(\widehat{Z}_t = \widehat{Z}_{t-}) = 1$ for every $t \notin N$. In other words, the projection map $D^{\mathcal{E}} \ni z \mapsto z(t) \in \mathcal{E}$ is continuous $(\widehat{\mathbb{P}} \circ \widehat{Z}^{-1})$-a.s.\ for every $t \notin N$. Thus, the continuous mapping theorem implies that the law of $Z^*_t$ on $\mathcal{E}$ under $\mathbb{Q}^m$ converges weakly to the law of $\widehat{Z}_t$ on $\mathcal{E}$ under $\widehat{\mathbb{P}}$, for every $t \notin N$.
	
	On the other hand, by Lemma~\ref{lem:ext_part}~(ii), the law of $Z^*_t$ under $\mathbb{Q}^m$ agrees with the law of $Z^*_t$ under $\mathbb{Q}$ for every $m \in \mathbb{N}^*$. This gives us $\widehat{\mathbb{P}} \circ (\widehat{Z}_t)^{-1} = \mathbb{Q} \circ (Z^*_t)^{-1} = \mathbb{P} \circ (Z_t)^{-1}$, for every $t \notin N$. Finally, by the right-continuity of the sample paths of $\widehat{Z}$ and $Z$, we have that $\widehat{Z}_s \to \widehat{Z}_t$ in law and $Z_s \to Z_t$ in law as $s \downarrow t$, for every $t \geq 0$. Therefore, we conclude that $\widehat{\mathbb{P}} \circ (\widehat{Z}_t)^{-1} = \mathbb{P} \circ (Z_t)^{-1}$ for every $t \geq 0$. This proves item (ii) of the theorem.
	
	\medskip
	\emph{Step 6: Characteristics of the limit.}\quad It remains to show that on the filtered probability space $(\widehat{\Omega}, \widehat{\mathcal{F}}, (\widehat{\mathcal{F}}_t)_{t \geq 0}, \widehat{\mathbb{P}})$ (strictly speaking, one needs to replace $(\widehat{\mathcal{F}}_t)_{t \geq 0}$ by its right-continuous regularization), $\widehat{Y}$ is a semimartingale with characteristics triplet $(\widehat{B}, \widehat{C}, \widehat{\nu})$ (defined in \eqref{eq:char_2}) associated with $h$. We define the following processes on $\widehat{\Omega}$ (recall \eqref{eq:ghat}):
	\begin{equation*}
		\widehat{G}_{k, t}
		\coloneqq (f_k * \widehat{\nu})_t
		= \int_0^t \int_{\mathbb{R}^d} f_k(\xi) \,\widehat{\kappa}(s, \widehat{Z}_s, d\xi) \,ds
		= \int_0^t \widehat{g}_k(s, \widehat{Z}_s) \,ds,\quad
		k \in \mathbb{N}^*.
	\end{equation*}
	We also define the process $\widehat{C}^\prime \coloneqq \widehat{C} + (hh^\top) * \widehat{\nu}$. Similar to \eqref{eq:C_G}, one has $\widehat{C}^\prime_{ij} = \widehat{C}_{ij} + \widehat{G}_{k(i,j)}$. If we manage to prove the following:
	\begin{enumerate}[label=(\roman*), topsep=0.3em, noitemsep]
		\item $\mathbb{Q}^m \circ (Y^*, B^*, \widetilde{C}^*)^{-1} \Rightarrow \widehat{\mathbb{P}} \circ (\widehat{Y}, \widehat{B}, \widehat{C}^\prime)^{-1}$,
		
		\item $\mathbb{Q}^m \circ (Y^*, G^*_k)^{-1} \Rightarrow \widehat{\mathbb{P}} \circ (\widehat{Y}, \widehat{G}_k)^{-1}$, for each $k \in \mathbb{N}^*$,
	\end{enumerate}
	then applying \cite{MR1943877}, Theorem~IX.2.4 (here we need $h$ to be continuous), we would finish the proof of item (i) of the theorem.
	
	On the other hand, on the space $\Omega^*$ we define the following processes:
	\begin{equation*}
		\overline{B}_t \coloneqq \int_0^t \widehat{b}(s, Z^*_s) \,ds,\quad
		\overline{C}_t \coloneqq \int_0^t \widehat{c}(s, Z^*_s) \,ds,\quad
		\overline{G}_{k, t} \coloneqq \int_0^t \widehat{g}_k(s, Z^*_s) \,ds,\quad
		k \in \mathbb{N}^*.
	\end{equation*}
	We also define the $\mathbb{R}^{d^2}$-valued process $\overline{C}^\prime$ via $\overline{C}^\prime_{ij} \coloneqq \overline{C}_{ij} + \overline{G}_{k(i, j)}$. Recall that in Step 4 we showed $\mathbb{Q}^m \circ (Y^*, Z^*)^{-1} \Rightarrow \widehat{\mathbb{P}} \circ (\widehat{Y}, \widehat{Z})^{-1}$, and in Step 5 we showed $\mathbb{Q}^m \circ (Z^*_t)^{-1} = \mathbb{Q} \circ (Z^*_t)^{-1}$ for all $m \in \mathbb{N}^*$ and $t \geq 0$. Using \eqref{eq:int_cond_2}, Proposition~\ref{prop:wconv_int} and Lemma~\ref{lem:joint_wconv}, we know these processes are well-defined $\mathbb{Q}$-a.s.\ and $\mathbb{Q}^m$-a.s.\ for each $m \in \mathbb{N^*}$, and we get the following weak convergence:
	\begin{enumerate}[label=(\roman*), topsep=0.3em, noitemsep]
		\item $\mathbb{Q}^m \circ (Y^*, \overline{B}, \overline{C}^\prime)^{-1} \Rightarrow \widehat{\mathbb{P}} \circ (\widehat{Y}, \widehat{B}, \widehat{C}^\prime)^{-1}$,
		
		\item $\mathbb{Q}^m \circ (Y^*, \overline{G}_k)^{-1} \Rightarrow \widehat{\mathbb{P}} \circ (\widehat{Y}, \widehat{G}_k)^{-1}$, for each $k \in \mathbb{N}^*$. 
	\end{enumerate}
	
	Thus, to conclude the proof, we need to show as $m \to \infty$: $\mathbb{Q}^m \circ (Y^*, B^*, \widetilde{C}^*)^{-1}$ and $\mathbb{Q}^m \circ (Y^*, \overline{B}, \overline{C}^\prime)^{-1}$ have the same limit; $\mathbb{Q}^m \circ (Y^*, G^*_k)^{-1}$ and $\mathbb{Q}^m \circ (Y^*, \overline{G}_k)^{-1}$ have the same limit. To do this, it suffices to show that for any $\varepsilon > 0$ and $t > 0$,
	\begin{equation}\label{eq:ucp}
		\begin{split}
			\lim_{m \to \infty} \mathbb{Q}^m\biggl(\max_{s \leq t} |B^*_s - \overline{B}_s| \geq \varepsilon\biggr) &= 0,\\
			\lim_{m \to \infty} \mathbb{Q}^m\biggl(\max_{s \leq t} |\widetilde{C}^*_s - \overline{C}^\prime_s| \geq \varepsilon\biggr) &= 0,\\
			\lim_{m \to \infty} \mathbb{Q}^m\biggl(\max_{s \leq t} |G^*_{k, s} - \overline{G}_{k, s}| \geq \varepsilon\biggr) &= 0,\quad
			\forall\, k \in \mathbb{N}^*.
		\end{split}
	\end{equation}
	Given \eqref{eq:int_cond} and \eqref{eq:cond_exp_3} (thus \eqref{eq:int_cond_2} as well), we know that \eqref{eq:ucp} is the consequence of Lemma~\ref{lem:approx}, and we are done.
\end{proof}

\begin{remark}\label{rem:canonical_space}
	We make a few comments on our choice of the canonical space in the proof of Theorem~\ref{thm:mp}, especially the component $C^{\mathcal{M}_+, d}_{0, \mathrm{i}}$. The canonical space for $(Y, B, C)$ is straightforward. The main difficulty is to find a proper space to fit in the third characteristic $\nu$, or some object from which we can recover $\nu$.
	
	Our first attempt is the space $C^{\mathbb{N}}_0 \coloneqq C^{\mathbb{R}^{\mathbb{N}}}_0$, on which the projections are denoted by $(G^*_k)$. We impose the probability measure $\mathbb{Q} = \mathrm{Leb}([0, 1]) \otimes (\mathbb{P} \circ (Z_0, Y, B, C, (G_k))^{-1})$ on our canonical space, where $G_k \coloneqq f_k * \nu$. Thus, we expect $G^*_k$ to be of the form $f_k * \nu^*$, where $\nu^*$ is the candidate of the third characteristic of $Y^*$ under $\mathbb{Q}$. However, it is not easy to construct $\nu^*$ explicitly from $(G^*_k)$. The best we can do is to show $Y^*$ is a semimartingale under $\mathbb{Q}$, so it has a third characteristic $\nu^*$. From this we can only show $G^*_k = f_k * \nu^*$ $\mathbb{Q}$-a.s. The measurability of $\Delta(f_k * \nu^*, T)$ with respect to $\sigma(\Delta(X, T))$ is not clear. Also, proving $G^*_k = f_k * \nu^*$ $\mathbb{Q}^m$-a.s.\ is not easy, since $\nu^*$ is constructed in a probability measure-specific way.
	
	Our second attempt is the space $\mathbb{M}$ consisting of all $\sigma$-finite positive measures $\nu$ on $\mathbb{R}_+ \times \mathbb{R}^d$ which admit a disintegration of the form $\nu(dt, dx) = \kappa(t, dx) dt$. We denote the projection to this space by $\nu^*$. We impose the probability measure $\mathbb{Q} = \mathrm{Leb}([0, 1]) \otimes (\mathbb{P} \circ (Z_0, Y, B, C, \nu)^{-1})$ on our canonical space. Then, we can show $\nu^*$, which is a component itself, is the third characteristic of $Y^*$ under $\mathbb{Q}$. However, one of the difficulties now becomes how to put a proper topology on $\mathbb{M}$ to make it Polish. Also, it is not obvious how to define the operators $\Theta$, $\nabla$ and $\Delta$ on $\mathbb{M}$. This makes the construction of the concatenated probability measures less straightforward.
	
	Therefore, we choose a space for the third characteristic which lies somewhere between the above two attempts. The space $C^{\mathcal{M}_+, d}_0$ is a function space, which is more tractable than $\mathbb{M}$. Meanwhile, elements of $C^{\mathcal{M}_+, d}_0$ are measure-valued functions, which encode richer information than $C^{\mathbb{N}}_0$. However, one problem with this space is that it is not $\Delta$-stable. By enlarging this space to $C^{\mathcal{M}, d}_0 \coloneqq C^{\mathcal{M}(\mathbb{R}^d)}_0$, one may solve the $\Delta$-stableness issue, but then Polishness fails. Instead, we restrict to the space $C^{\mathcal{M}_+, d}_{0, \mathrm{i}}$ of increasing trajectories, which is both $\Delta$-stable and Polish. The canonical process $M^*$ on this space is not the third characteristic itself, but a type of running integral. Using Lemma~\ref{lem:cstr_nu}, one can recover $\nu^*$ explicitly from $M^*$ in a probability measure-free way.
\end{remark}

\bibliography{bibliography}

@incollection {MR0808203,
	AUTHOR = {Krylov, Nicolai V.},
	TITLE = {Once more about the connection between elliptic operators and
	{I}t\^{o}'s stochastic equations},
	BOOKTITLE = {Statistics and control of stochastic processes ({M}oscow,
	1984)},
	SERIES = {Transl. Ser. Math. Engrg.},
	PAGES = {214--229},
	PUBLISHER = {Optimization Software, New York},
	YEAR = {1985},
	ISBN = {0-911575-18-9},
	MRCLASS = {60H10 (35R60)},
	MRNUMBER = {808203},
	MRREVIEWER = {Yu.\ L.\ Daletski\u{\i}},
}

@article {MR0833267,
	AUTHOR = {Gy\"{o}ngy, Istv\'{a}n},
	TITLE = {Mimicking the one-dimensional marginal distributions of
	processes having an {I}t\^{o} differential},
	JOURNAL = {Probab. Theory Related Fields},
	FJOURNAL = {Probability Theory and Related Fields},
	VOLUME = {71},
	YEAR = {1986},
	NUMBER = {4},
	PAGES = {501--516},
	ISSN = {0178-8051,1432-2064},
	MRCLASS = {60H10},
	MRNUMBER = {833267},
	MRREVIEWER = {R.\ Sh.\ Liptser},
	DOI = {10.1007/BF00699039},
	URL = {https://doi.org/10.1007/BF00699039},
}

@article {MR3098443,
	AUTHOR = {Brunick, Gerard and Shreve, Steven},
	TITLE = {Mimicking an {I}t\^{o} process by a solution of a stochastic
	differential equation},
	JOURNAL = {Ann. Appl. Probab.},
	FJOURNAL = {The Annals of Applied Probability},
	VOLUME = {23},
	YEAR = {2013},
	NUMBER = {4},
	PAGES = {1584--1628},
	ISSN = {1050-5164,2168-8737},
	MRCLASS = {60G99 (60H10 91G20)},
	MRNUMBER = {3098443},
	MRREVIEWER = {Hannah\ Geiss},
	DOI = {10.1214/12-aap881},
	URL = {https://doi.org/10.1214/12-aap881},
}

@article{bentata2012mimicking,
	title={Mimicking the marginal distributions of a semimartingale},
	author={Bentata, Amel and Cont, Rama},
	journal={arXiv preprint arXiv:0910.3992v5},
	year={2012}
}

@article {MR4814246,
	AUTHOR = {Larsson, Martin and Long, Shukun},
	TITLE = {Markovian projections for {I}t\^o{} semimartingales with
	jumps},
	JOURNAL = {Electron. Commun. Probab.},
	FJOURNAL = {Electronic Communications in Probability},
	VOLUME = {29},
	YEAR = {2024},
	PAGES = {Paper No. 65, 13},
	ISSN = {1083-589X},
	MRCLASS = {60},
	MRNUMBER = {4814246},
}

@book {MR1700749,
	AUTHOR = {Billingsley, Patrick},
	TITLE = {Convergence of probability measures},
	SERIES = {Wiley Series in Probability and Statistics: Probability and
	Statistics},
	EDITION = {Second},
	NOTE = {A Wiley-Interscience Publication},
	PUBLISHER = {John Wiley \& Sons, Inc., New York},
	YEAR = {1999},
	PAGES = {x+277},
	ISBN = {0-471-19745-9},
	MRCLASS = {60B10 (28A33 60F17)},
	MRNUMBER = {1700749},
	DOI = {10.1002/9780470316962},
	URL = {https://doi.org/10.1002/9780470316962},
}

@book {MR1943877,
	AUTHOR = {Jacod, Jean and Shiryaev, Albert N.},
	TITLE = {Limit theorems for stochastic processes},
	SERIES = {Grundlehren der mathematischen Wissenschaften [Fundamental
	Principles of Mathematical Sciences]},
	VOLUME = {288},
	EDITION = {Second},
	PUBLISHER = {Springer-Verlag, Berlin},
	YEAR = {2003},
	PAGES = {xx+661},
	ISBN = {3-540-43932-3},
	MRCLASS = {60-02 (60F17 60G48 60H05)},
	MRNUMBER = {1943877},
	MRREVIEWER = {Dominique\ L\'{e}pingle},
	DOI = {10.1007/978-3-662-05265-5},
	URL = {https://doi.org/10.1007/978-3-662-05265-5},
}

@article {MR4168386,
	AUTHOR = {R\"{o}ckner, Michael and Xie, Longjie and Zhang, Xicheng},
	TITLE = {Superposition principle for non-local
	{F}okker-{P}lanck-{K}olmogorov operators},
	JOURNAL = {Probab. Theory Related Fields},
	FJOURNAL = {Probability Theory and Related Fields},
	VOLUME = {178},
	YEAR = {2020},
	NUMBER = {3-4},
	PAGES = {699--733},
	ISSN = {0178-8051,1432-2064},
	MRCLASS = {60H10 (35Q84 60J76)},
	MRNUMBER = {4168386},
	DOI = {10.1007/s00440-020-00985-8},
	URL = {https://doi.org/10.1007/s00440-020-00985-8},
}

@article {MR4612111,
	AUTHOR = {Lacker, Daniel and Shkolnikov, Mykhaylo and Zhang, Jiacheng},
	TITLE = {Superposition and mimicking theorems for conditional
	{M}c{K}ean-{V}lasov equations},
	JOURNAL = {J. Eur. Math. Soc. (JEMS)},
	FJOURNAL = {Journal of the European Mathematical Society (JEMS)},
	VOLUME = {25},
	YEAR = {2023},
	NUMBER = {8},
	PAGES = {3229--3288},
	ISSN = {1435-9855,1435-9863},
	MRCLASS = {60H10 (35R60 49N80 60H15 91A16 93E20)},
	MRNUMBER = {4612111},
	DOI = {10.4171/jems/1266},
	URL = {https://doi.org/10.4171/jems/1266},
}

@article {MR359017,
	AUTHOR = {Komatsu, Takashi},
	TITLE = {Markov processes associated with certain integro-differential
	operators},
	JOURNAL = {Osaka Math. J.},
	FJOURNAL = {Osaka Mathematical Journal},
	VOLUME = {10},
	YEAR = {1973},
	PAGES = {271--303},
	ISSN = {0388-0699},
	MRCLASS = {60J35},
	MRNUMBER = {359017},
	MRREVIEWER = {S.\ R. S. Varadhan},
	URL = {http://projecteuclid.org/euclid.ojm/1200694303},
}

@book {MR838085,
	AUTHOR = {Ethier, Stewart N. and Kurtz, Thomas G.},
	TITLE = {Markov processes},
	SERIES = {Wiley Series in Probability and Mathematical Statistics:
	Probability and Mathematical Statistics},
	NOTE = {Characterization and convergence},
	PUBLISHER = {John Wiley \& Sons, Inc., New York},
	YEAR = {1986},
	PAGES = {x+534},
	ISBN = {0-471-08186-8},
	MRCLASS = {60J25 (60B10 60F05 60F17 60G44 60J80)},
	MRNUMBER = {838085},
	MRREVIEWER = {S.\ R. S. Varadhan},
	DOI = {10.1002/9780470316658},
	URL = {https://doi.org/10.1002/9780470316658},
}

@article {MR1248747,
	AUTHOR = {Mikulevi\v{c}ius, Remigijus and Pragarauskas, Henrikas},
	TITLE = {The martingale problem related to nondegenerate {L}\'evy
	operators},
	JOURNAL = {Liet. Mat. Rink.},
	FJOURNAL = {Matematikos ir Informatikos Institutas. Lietuvos
	Matematik\polhk u\ Draugija. Vilniaus Universitetas. Lietuvos
	Matematikos Rinkinys},
	VOLUME = {32},
	YEAR = {1992},
	NUMBER = {3},
	PAGES = {377--396},
	ISSN = {0132-2818},
	MRCLASS = {60J25},
	MRNUMBER = {1248747},
	MRREVIEWER = {Constantin\ Tudor},
	DOI = {10.1007/BF00971435},
	URL = {https://doi.org/10.1007/BF00971435},
}

@article {MR1297694,
	AUTHOR = {Mikulevi\v{c}ius, Remigijus and Pragarauskas, Henrikas},
	TITLE = {On the uniqueness of solutions of the martingale problem that
	is associated with degenerate {L}\'evy operators},
	JOURNAL = {Liet. Mat. Rink.},
	FJOURNAL = {Matematikos ir Informatikos Institutas. Lietuvos
	Matematik\polhk u\ Draugija. Vilniaus Universitetas. Lietuvos
	Matematikos Rinkinys},
	VOLUME = {33},
	YEAR = {1993},
	NUMBER = {4},
	PAGES = {455--475},
	ISSN = {0132-2818},
	MRCLASS = {60H20 (60G44)},
	MRNUMBER = {1297694},
	MRREVIEWER = {Constantin\ Tudor},
	DOI = {10.1007/BF00995989},
	URL = {https://doi.org/10.1007/BF00995989},
}

@article {MR4976813,
	AUTHOR = {Larsson, Martin and Long, Shukun},
	TITLE = {Inverting the {M}arkovian projection for pure jump processes},
	JOURNAL = {Stochastic Process. Appl.},
	FJOURNAL = {Stochastic Processes and their Applications},
	VOLUME = {192},
	YEAR = {2026},
	PAGES = {Paper No. 104804, 18},
	ISSN = {0304-4149,1879-209X},
	MRCLASS = {60G44 (60G48 60J76 91G40)},
	MRNUMBER = {4976813},
	DOI = {10.1016/j.spa.2025.104804},
	URL = {https://doi.org/10.1016/j.spa.2025.104804},
}

@book {MR2273672,
	AUTHOR = {Protter, Philip E.},
	TITLE = {Stochastic integration and differential equations},
	SERIES = {Stochastic Modelling and Applied Probability},
	VOLUME = {21},
	EDITION = {Second},
	NOTE = {Corrected third printing},
	PUBLISHER = {Springer-Verlag, Berlin},
	YEAR = {2005},
	PAGES = {xiv+419},
	ISBN = {3-540-00313-4},
	MRCLASS = {60-02 (60G44 60H05 60H10 60H20)},
	MRNUMBER = {2273672},
	MRREVIEWER = {Evelyn\ Buckwar},
	DOI = {10.1007/978-3-662-10061-5},
	URL = {https://doi.org/10.1007/978-3-662-10061-5},
}

@book {MR2190038,
	AUTHOR = {Stroock, Daniel W. and Varadhan, S. R. Srinivasa},
	TITLE = {Multidimensional diffusion processes},
	SERIES = {Classics in Mathematics},
	NOTE = {Reprint of the 1997 edition},
	PUBLISHER = {Springer-Verlag, Berlin},
	YEAR = {2006},
	PAGES = {xii+338},
	ISBN = {978-3-540-28998-2; 3-540-28998-4},
	MRCLASS = {60-02 (60H05 60J25 60J60 60J65)},
	MRNUMBER = {2190038},
}

@book {MR1483890,
	AUTHOR = {Bass, Richard F.},
	TITLE = {Diffusions and elliptic operators},
	SERIES = {Probability and its Applications (New York)},
	PUBLISHER = {Springer-Verlag, New York},
	YEAR = {1998},
	PAGES = {xiv+232},
	ISBN = {0-387-98315-5},
	MRCLASS = {60J10 (35B99 35J99 35R60 60H07 60H10)},
	MRNUMBER = {1483890},
	MRREVIEWER = {Elton\ Pei\ Hsu},
}
\bibliographystyle{abbrv}

\end{document}